\documentclass[12pt]{amsart}

\usepackage{graphicx}

\usepackage{subcaption} %para facer a artistada coas figuras nun grid
\usepackage{quiver}
\usepackage{adjustbox}
\usepackage{tikz-cd}
\usepackage{hyperref}
\hypersetup{bookmarks=true,
	unicode=true,
	colorlinks=true,
	citecolor=black,
	linkcolor=black,
	urlcolor=black,
	plainpages=false,
	pdfpagelabels=true}

 \usepackage{url}	%url adresses
 \allowdisplaybreaks %align may cross pages

\usepackage{pgf}

\usepackage{xcolor}

\newcommand{\DM}[1]{{\small\color{violet}#1}}

\usepackage{comment} % para comentar trozos grandes de codigo

\newtheorem{teo}{Theorem}[section]

\newtheorem{corollary}[teo]{Corollary}

\newtheorem{lemma}[teo]{Lemma}
\newtheorem{prop}[teo]{Proposition}

\theoremstyle{definition}
\newtheorem{definition}[teo]{Definition}
\newtheorem{defi}[teo]{Definition}
\newtheorem{example}[teo]{Example}

\newtheorem{ex}[teo]{Example}

\theoremstyle{remark}
\newtheorem{remark}{Remark}
\newtheorem{rem}{Remark}

\numberwithin{figure}{section}%figure numbering

\newcommand{\Ba}{\mathcal{B}}

\newcommand{\CC}{\mathcal{C}}

\newcommand{\ccat}{\mathop{\mathrm{ccat}}}
\newcommand{\cD}{\mathrm{cD}}

\newcommand{\co}{\colon}

\newcommand{\cTC}{\mathop{\mathrm{cTC}}}

\newcommand{\D}{\mathcal{D}}

\newcommand{\Dc}{\mathcal{D}}

\newcommand{\E}{\mathcal{E}}

\newcommand{\id}{\mathrm{id}}

\newcommand{\sC}{\mathrm{secat}}

\newcommand{\sG}{\mathrm{SG}}

\newcommand{\Ab}{\textbf{\textsf{Ab}}}

\newcommand{\Uc}{\mathcal{U}}
\newcommand{\wH}{\wtilde{H}}

\newcommand{\wtilde}{\widetilde}
\newcommand{\Ef}{\mathrm{E}_F}

\newcommand{\cpl}{\mathrm{c.p.l}}
\newcommand{\Pa}{\mathrm{P}}

\begin{document}

\title[Weak and Strong Fibrations of Functors]{Weak and Strong Fibrations of Functors and Motion Planning}

\author[I. Carcac\'ia]{%
	Isaac Carcac\'ia-Campos %etc.
}

 \address{%
           	Isaac Carcac\'ia Campos
            \\
              Departamento de Matem\'aticas, Universidade de Santiago de Compostela, Universidade de Vigo, 15782-SPAIN}
               \email{isaac.c.campos@usc.es}

\author[E. Mac\'ias]{%
	Enrique Mac\'ias-Virg\'os %etc.
}

 \address{%
           	Enrique Mac\'ias-Virg\'os \\
              CITMAga, Departamento de Matem\'aticas, Universidade de Santiago de Compostela, 15782-SPAIN}
               \email{quique.macias@usc.es}

\author[D. Mosquera]{%
	David Mosquera-Lois %etc.
}

 \address{%
           	David Mosquera-Lois \\
              Departamento de Matemáticas, Universidade de Vigo, 15782-SPAIN}
               \email{david.mosquera.lois@uvigo.gal}

%%==================================%%
%% sample for unstructured abstract %%
%%==================================%%
\begin{abstract} We develop a homotopical framework for small categories that extends classical invariants of algebraic topology to the categorical setting. 
Our approach is based on the construction of a genuine path category, obtained through a zig--zag and localization procedure, which allows us to define strong and weak fibrations for functors. 
We establish their basic properties, introduce a fibrant replacement for functors, and extend homotopical invariants such as the \v{S}varc genus and sectional category to small categories. 
Finally, we apply this framework to motion planning in small categories, providing categorical analogues of Farber’s topological complexity while removing finiteness constraints typical of existing approaches.
\end{abstract}

\keywords{weak homotopy of functors, strong homotopy of functors, Homotopic distance, sectional category in small categories, path fibration, fibrations of functors,  categorical topological complexity, Lusternik–Schnirelmann category, topological complexity, motion planning in categories}

%%\pacs[JEL Classification]{D8, H51}

\subjclass[2020]{55M30, 55U99}

\maketitle

%\tableofcontents

\section*{Introduction}

The study of homotopical invariants has played a central role in algebraic topology since its origins. 
Classical notions such as the Lusternik--Schnirelmann category (LS-category)~\cite{LS-CATEGORY}, topological complexity~\cite{FARBER}, the \v{S}varc genus and sectional category~\cite{Svarc}, and the more recent notion of homotopic distance~\cite{MAC-MOSQ}, provide quantitative tools for measuring the complexity of spaces and maps. 
These invariants are particularly relevant in applications such as motion planning in robotics, where their values for a configuration space determine the intrinsic difficulty of designing a continuous motion planning algorithm~\cite{MEDIT}.

In recent years, several authors have explored categorical analogues of these invariants. 
Tanaka studied LS-category for small categories~\cite{TANAKA}. 
Subsequently, the two last authors of present paper introduced the notions of topological complexity and homotopic distance for functors~\cite{MAC-MOSQ-FUNCT}. 
More recently, these ideas have been extended and refined in various directions~\cite{CMM,Cohomologysvarcgenus}.

All these works build on Lee’s approach to homotopies in categories~\cite{LEE} and on Grothendieck’s theory of fibrations~\cite{Vistoli}. 
Nevertheless, these approaches do not fully capture the homotopical behaviour required in the categorical setting, which limits the study of motion planning invariants for small categories.

The aim of this paper is to overcome these limitations by developing a systematic homotopical framework for small categories, extending classical invariants to this combinatorial setting. 
Our starting point is Hoff’s approach to homotopy theory in categories~\cite{HOFF}, based on the construction of a \emph{path category}. 
However, Hoff’s construction proves too rigid for our purposes. 
This motivates our definition of a novel genuine \emph{path category} associated with a small category, obtained via a zig--zag construction followed by a localization procedure. 
This provides a flexible and well-behaved model for categorical paths, which in turn allows us to introduce appropriate notions of fibration.

The main contributions of this work may be summarized as follows:
\begin{itemize}
    \item We introduce a path category $\Pa \CC$ for any small category $\CC$, together with an endpoint functor that plays the role of the classical path fibration (Theorem~\ref{Initial_Final_Path_fibration}).
    \item We define notions of \emph{strong} and \emph{weak} fibrations for functors, inspired by the classical lifting properties in topology, and establish their first properties, including relations between them (Theorem~\ref{under_length_preserving_strong_weak_equivalence}), and their behaviour under pullbacks and composition.
    \item We construct the \emph{fibrant replacement} of a functor (Theorem~\ref{functor_factorisation_through_fibration}), showing that any functor factors as a weak homotopy equivalence followed by a fibration.
    \item Using these tools, we extend classical invariants such as the \v{S}varc genus, sectional category, LS-category, categorical complexity, and homotopic distance to the setting of small categories, introducing both strong and weak versions.
    \item Finally, we apply this framework to \emph{motion planning in small categories}, obtaining categorical analogues of Farber’s topological complexity and proving relations among the different invariants (Theorem~\ref{teo:Svarc_Genus_Homotopic_Distance}).
\end{itemize}

Furthermore, by working in the setting of small categories we are able to overcome the finiteness condition that is required for fibrations in the simplicial context~\cite{Simplicial_fibrations}. 
This yields a more flexible and general theory, extending the scope of homotopical invariants beyond the restrictions of the simplicial framework.

\subsection*{Organization of the paper}  
Section~\ref{sec:path-category} introduces the construction of the path category and establishes its basic properties. 
Section~\ref{sec:fibrations} defines strong and weak fibrations and proves their main structural results. 
Section~\ref{sec:fibrant_replacement_svarc} presents the fibrant replacement of a functor and extends the notion of \v{S}varc genus to categories. 
Finally, Section~\ref{sec:motion_planning} applies our framework to motion planning in small categories, introducing weak and strong versions of LS-category, categorical complexity, and homotopic distance, and establishing relations among these invariants.

\subsection*{Notation and conventions} 
All categories considered are assumed to be small. 
Given two small categories $\CC$ and $\Dc$ we denote their product by $\CC \times \Dc$. 
Its objects are pairs consisting of an object of $\CC$ and an object of $\Dc$, and its morphisms are pairs of morphisms in $\CC$ and $\Dc$, respectively.

\section{The right notion for a path category}\label{sec:path-category}

\subsection{Definition of the path category} 
For each natural number $m$ we define the {\em zig-zag category of length $m$}, denoted by $\mathbb{I}_m$,  as the category whose objects are the positive integers from $0$ to $m$ and morphisms $i \rightarrow i+1$ if $i$ is even and $i \leftarrow i+1$ if $i$ is odd.

Let $\CC$ be a small category. A \emph{path of length $m$} in $\CC$ is a functor $I: \mathbb{I}_m \rightarrow \CC$. For a path of length $m$ $I: \mathbb{I}_m \rightarrow \CC$ we refer to the object $I(0)$ as the initial point or object of the path $I$ and to $I(m)$ as the end point or object of the path. As an abuse of notation, we use the term {\em path} to  not specify the length.  A category $\CC$ is \emph{connected} if for every pair of objects $\CC$ and $\Dc$ there is path such that the initial object is $\CC$ and the end point is $\Dc$.

We define $\mathcal{P}{\CC}$ as the category whose objects are paths and a morphism between two paths $I \colon \mathbb{I}_{m} \rightarrow \CC$ and $J \colon \mathbb{I}_{n} \rightarrow \CC$ is a pair $(F,\alpha)\colon I \rightarrow J$ where \begin{itemize}
    \item $F \colon \mathbb{I}_m \rightarrow \mathbb{I}_n$ is a functor such that $F(0)=0$ and $F(m)=n$
    \item and $\alpha$ is a natural transformation $\alpha \colon I \implies J \circ F$.
\end{itemize}   
%In a more diagrammatic fashion we can see a morphism between path as:
% https://q.uiver.app/#q=WzAsMyxbMCwwLCJcXG1hdGhiYntJfV97bl8xfSJdLFsyLDAsIlxcbWF0aGJie0l9X3tuXzF9Il0sWzEsMSwiXFxDQyJdLFswLDEsIkYiXSxbMCwyLCJJXzEiLDJdLFsxLDIsIklfMiJdLFs0LDUsIlxcYWxwaGEiLDEseyJzaG9ydGVuIjp7InNvdXJjZSI6MjAsInRhcmdldCI6MjB9fV1d
\begin{comment}
 
\[\begin{tikzcd}[ampersand replacement=\&]
	{\mathbb{I}_{m}} \&\& {\mathbb{I}_{n}} \\
	\& \CC
	\arrow["F", from=1-1, to=1-3]
	\arrow[""{name=0, anchor=center, inner sep=0}, "{I}"', from=1-1, to=2-2]
	\arrow[""{name=1, anchor=center, inner sep=0}, "{J}", from=1-3, to=2-2]
	\arrow["\alpha"{description}, Rightarrow, from=0, to=1]
\end{tikzcd}\]
   
\end{comment}

\begin{comment}
 
\begin{rem}
    A functor $F \colon \mathbb{I}_m \rightarrow \mathbb{I}_n$ a functor such that $F(0)=0$ and $F(m)=n$ is always surjective and therefore $m\geq n$. Moreover it has to respect the order of the natural numbers in the sense that if $i \leq j\leq m$ then $F(i) \leq F(j)$.
\end{rem}
   
\end{comment}

The category $\mathcal{P}{\CC}$ has (finite) paths as objects. However, it has some issues that we need to fix. For example, two constant paths with the same constant value but different length are different, but from a homotopic point of view, they must be the same since one may be seen as a reparameterization of the other. We solve this difficulty by localizing.

We define the set $W \subset \mathrm{Mor}(\mathcal{P}\CC)$ as the set of all morphisms between paths such that the natural transformation is the identity, i.e. where $I=J \circ F$ and therefore the morphism has the form $(F,1_{I})$. With this we can define our path category using localization \cite{nlab:category_of_fractions}.

\begin{comment}
   
\begin{defi}
    Let $\CC$ be a category and $W$ a class os morphism. The localization category of $\CC$ by $W$, if it exists, is a category $\CC[W^{-1}]$ with the following universal property: there is a universal functor $\pi \colon \CC \rightarrow \CC[W^{-1}]$ such that for each $w \in W$ $\pi(w)$ is an isomorphism and for any other functor $F \colon C \rightarrow D$ such that $F(w)$ is an isomorphism for each $w \in W$ there is another funtor $\bar{F} \colon \CC([W^{-1}] \rightarrow D$ such that $F=\bar{F} \circ \pi$.
\end{defi}

%\begin{comment}
\begin{rem}
    It is well known that the category $\CC[W^{-1}]$ is unique up to equivalence and has a model given by the category whose objects are the same as $\CC$ and the morphism between two objects are a zig zag or morphism where the right one are morphism in $\CC$ and the left one are morphism in $W$.
\end{rem}
%\end{comment}
 
\end{comment}

\begin{defi}
Let $\CC$ be a small category.  We define the {\em path category} of $\CC$, denoted by $\Pa \CC$, as the localization $\mathcal{P} \CC[W^{-1}]$.    
\end{defi}

We highlight the differences between our approach and the construction of the path category due to Hoff \cite{HOFF} in one example. Recall that the definition of path category of a small category $\CC$ by Hoff is the category of finite functors of the form $\CC^\mathbb{I_\infty}$ where $\mathbb{I_\infty}$ is the infinity zig-zag category generated by the following diagram:
$$
% https://tikzcd.yichuanshen.de/#N4Igdg9gJgpgziAXAbVABwnAlgFyxMJZABgBpiBdUkANwEMAbAVxiRGJAF9T1Nd9CKAIzkqtRizZCuPEBmx4CRAEyjq9Zq0QhlM3goFEAzGvGa2AHQtQIOBJzEwoAc3hFQAMwBOEALZIyEBwIJCFuTx9-RFUgkMQw2W8-JBjgpCMHTiA
\begin{tikzcd}
0 \arrow[r] & 1 & 2 \arrow[l] \arrow[r] & \dots
\end{tikzcd}
$$
and $F \colon \mathbb{I}_\infty \rightarrow \CC$  is finite if it stabilizes; that is, if there exists a non-negative integer $n$ such that for all $m\geq n$ we have that $F(m)=F(n)$ and every morphism is mapped to the identity of $F(n)$.

\begin{example}\label{ex:path_category_different}
    Let $\mathcal{S}^1$ be the category generated by the following diagram:
    $$
    % https://tikzcd.yichuanshen.de/#N4Igdg9gJgpgziAXAbVABwnAlgFyxMJZABgBpiBdUkANwEMAbAVxiRAA8QBfU9TXfIRQBGclVqMWbAJ7dxMKAHN4RUADMAThAC2SMiBwQkokACMYYKEgDM++s1aIQa7r2dbdifYePVzlpABaW2p7KSdFEGoGOnMGAAV+PAI2DSxFAAscOS4gA
\begin{tikzcd}
x \arrow[r, "f", bend left] \arrow[r, "g"', bend right] & y
\end{tikzcd}
    $$
    Consider the paths $I\colon \mathbb{I}_5 \rightarrow \mathcal{S}^1$ and $I'\colon \mathbb{I}_3 \rightarrow \mathcal{S}^1$:
    $$
    % https://tikzcd.yichuanshen.de/#N4Igdg9gJgpgziAXAbVABwnAlgFyxMJZABgBoBGAXVJADcBDAGwFcYkQAPEAX1PU1z5CKchWp0mrdgE8efEBmx4CRAExiaDFm0Sc5-JUKJli4rVN1deBwSpGlTmyTpCzrCgcuHJ1jidvYreUVbbwBmBzNnGX0PQztkABZIpwDLHnEYKABzeCJQADMAJwgAWyQyEBwIJFEQACMYMCgK1IsQAtjisqR1KprEOvMXbJAaRnpGxgAFTyNdIqxsgAscLpLyxAj+pGT-ds73bs2AVhpq3baXTvHJmBm5uxBFlbWjjaQzncQANiv2Q7yY5IADs5wGfxAEyms3iwmeS1WY32Iwy3CAA
\begin{tikzcd}
x \arrow[r, "f"] & y & x \arrow[l, "f"'] \arrow[r, "f"] & y & x \arrow[l, "g"'] \\
x \arrow[r, "f"] & y & x \arrow[l, "g"']                &   &                  
\end{tikzcd}
$$
There is a morphism in $\Pa \mathcal{S}^1$ defined by:
$$
% https://tikzcd.yichuanshen.de/#N4Igdg9gJgpgziAXAbVABwnAlgFyxMJZABgBoBGAXVJADcBDAGwFcYkQAPEAX1PU1z5CKchWp0mrdgE8efEBmx4CRAExiaDFm0QhZvfkqFEyxcVqm6uBhQOXDkos5sk69cw4JUp1zidvZreUUvBwBmUj8LN31gu2MUABZI81dAj1sjb2QIqhcA3VjPeyJkvP9LTh5xGCgAc3giUAAzACcIAFskMhAcCCRREAAjGDAobvzK5oy2zqR1Xv7EQcZ6EcYABXjvEFasOoALHBBJt3IAfSKQWa7ECMWkZIq3aZsbpABWGj7H0-ZpmirdZbLLCXb7I4zdq3L4PRAANj+ule8neiAA7N8loiQECYJttmC9odjkiQHUoXNEAsfogABxki5XNEATixSAZz3YFMBa3xINC7GJkLe0KQ91pPWi7AuQRaYsQT1pg2luiZlJh7OpZJR8qpONpnNVIHVoqpmLhLO4lG4QA
\begin{tikzcd}
x \arrow[r, "f"] \arrow[d, "1_x"] & y \arrow[d, "1_y"] & x \arrow[l, "f"'] \arrow[r, "f"] \arrow[d, "f"] & y \arrow[d, "1_y"] & x \arrow[l, "g"'] \arrow[d, "1_x"] \\
x \arrow[r, "f"]                  & y                  & y \arrow[l, "1_y"'] \arrow[r, "1_y"]            & y                  & x \arrow[l, "g"']          
\end{tikzcd}
$$
But there is no such morphism in the category $\CC^\mathbb{I_\infty}$ defined by Hoff. 
\end{example}

\subsection{Functoriality of the path construction}
In this subsection we establish that the path construction behaves functorially. This will allow us to treat the passage from a category to its path category as a genuine functor between categories, and to understand how path categories interact with categorical products.

\begin{lemma}
    If $\CC$ is a small category, then its path category $\Pa \CC$ is also small.
\end{lemma}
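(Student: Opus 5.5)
The plan is to show first that $\mathcal{P}\CC$ is small and then that localizing at $W$ does not increase the size of the hom-collections or of the object collection.

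\emph{Objects.} The objects of $\mathcal{P}\CC$ form the disjoint union $\coprod_{m\in\mathbb{N}} \mathrm{Fun}(\mathbb{I}_m,\CC)$. Each $\mathbb{I}_m$ is finite, so a functor $\mathbb{I}_m\to\CC$ is determined by a finite tuple of objects and morphisms of $\CC$. Hence $\mathrm{Fun}(\mathbb{I}_m,\CC)$ is a subset of $\mathrm{Mor}(\CC)^{m}\times \mathrm{Ob}(\CC)^{m+1}$, and is therefore a set. A countable union of sets is a set.

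\emph{Morphisms.} A morphism $I\to J$ is a pair $(F,\alpha)$. There are finitely many functors $F\colon\mathbb{I}_m\to\mathbb{I}_n$. Each $\alpha$ is a family $(\alpha_i)_{i=0}^m$ of morphisms of $\CC$, so it lies in $\mathrm{Mor}(\CC)^{m+1}$. Thus every hom-collection is a set, and so is the union of all of them, indexed by the set of pairs of objects. So $\mathcal{P}\CC$ is small.

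\emph{Localization.} I will use the standard explicit model of $\mathcal{P}\CC[W^{-1}]$ (see \cite{nlab:category_of_fractions}). Its objects are those of $\mathcal{P}\CC$. A morphism is an equivalence class of finite zig-zags of composable arrows, each either a morphism of $\mathcal{P}\CC$ or a formal inverse $w^{-1}$ of some $w\in W$. The set of all finite words over the set $\mathrm{Mor}(\mathcal{P}\CC)\sqcup W$ is a set, being a countable union of finite powers of a set. The morphisms of $\Pa\CC$ are a quotient of a subset of it, hence form a set. Composition is concatenation, identities are empty words, and the universal property is verified in the usual way. This shows that the localization exists as a small category, and $\Pa\CC$ is small.

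The only step needing care is the last one. For a large category, localization can fail to have small hom-sets, which is why existence is usually qualified ("if it exists"). Here smallness of $\mathcal{P}\CC$ is exactly what makes the free-category-modulo-relations construction land in sets, so the main work is checking the size bounds in the first two steps.
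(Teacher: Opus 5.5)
Your proof is correct and follows essentially the same route as the paper: the objects of $\mathcal{P}\CC$ form a countable union of the sets $\mathrm{Fun}(\mathbb{I}_m,\CC)$, morphisms $(F,\alpha)$ are determined by finitely many data in $\CC$, and the localization is realized by finite zig-zags with arrows of $W$ formally inverted. Your description of that last step, as words over $\mathrm{Mor}(\mathcal{P}\CC)\sqcup W$ modulo the generated congruence, is a slightly more explicit version of the same argument.
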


\begin{proof}
We begin by proving that $\mathrm{P} \CC$ is a small category. Recall that the objects of $\mathrm{P} \CC$ are  functors $\mathbb{I}_m \to C$ for some $m \in \mathbb{N}$. 

Let us begin by fixing $m$.  Giving a functor $\mathbb{I}_m \rightarrow \CC$ amounts to choosing a finite tuple of objects and a finite tuple of morphisms from the sets $\mathrm{Ob}(C)$ and $\mathrm{Mor}(C)$ (respectively). Therefore, for each $m$, the collection 
$\mathrm{Fun}(\mathbb{I}_m,\CC)$ of functors is a set. 
Finally, the class of all paths is obtained as a countable union
\[
\mathrm{Ob}(\Pa \CC) \;=\; \bigcup_{m \in \mathbb{N}} \mathrm{Fun}(\mathbb{I}_m,\CC),
\]
which is still a set. Hence the objects of $\Pa \CC$ form a set.

Morphisms in $\mathcal{P} \CC$ are pairs $(F,\alpha)$, where $F : I_m \to I_n$ is a 
functor between finite zig–zag categories and $\alpha : I \Rightarrow J \circ F$ 
is a natural transformation. These data are again specified by finitely many 
objects and morphisms of $C$, so $\mathrm{Mor}(P C)$ is a set. Thus $\mathcal{P} \CC$ is small. 

Finally, the localization $\Pa \CC = \mathcal{P} \CC[W^{-1}]$ has the same set of 
objects as $\Pa \CC$, and morphisms are represented by finite zig–zags of arrows in 
$P C$ with arrows in $W$ reversed. Since $\Pa \CC$ is small, the collection of such 
zig–zags is also a set. Therefore $\Pa \CC$ is small.
\end{proof}

\begin{prop}
    There is a functor $\mathrm{P} \colon \textbf{cat} \rightarrow \textbf{cat}$ that takes a category $\CC$ to the category $\mathrm{P} \CC$.
    \begin{proof}
        Suppose that we have a functor $G \colon \CC \rightarrow \Dc$ we have to define a functor $\mathrm{P}F \colon \mathrm{P} \CC \rightarrow \mathrm{P} \Dc$. Take a path $I \colon \mathbb{I}_n \rightarrow \CC$ we have an object $\mathrm{P}G (I)= G \circ I$. Now suppose that you have a morphism of paths in $\CC$ given by $(F,\alpha) \colon I_1 \rightarrow I_2$ we have a morphism of paths in $\Dc$ given by $(F,1_G\alpha)$ where $1_G\alpha$ denotes the horizontal composition of natural transformations. This defines a functor between the localizing categories since every morphism in $W_\CC$ is of the form $(F,1_{I})$ and it goes to an element $(F,1_G1_I)=(F,1_{G\circ I}) \in W_\Dc$. 
    \end{proof}
\end{prop}

To simplify the treatment of the product of categories later, we state the following lemma, which follows from the properties of localization of categories.
\begin{lemma}\label{Product}
    Let $\Ba_1$ and $\Ba_2$ be two categories. There is an equivalence between $\Pa (\Ba_1 \times \Ba_2)$ and $ \Pa(\Ba_1) \times \Pa(\Ba_2)$.
\end{lemma}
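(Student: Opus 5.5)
The plan is to compare both categories with the localization of the product $\mathcal{P}\Ba_1\times\mathcal{P}\Ba_2$, and to use reparametrizations (morphisms in $W$) to equalize lengths.

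\emph{Step 1: the comparison functor.} The projections $p_i\colon \Ba_1\times\Ba_2\to\Ba_i$ give functors $\Pa p_i\colon \Pa(\Ba_1\times\Ba_2)\to \Pa\Ba_i$ by the functoriality proposition above. Together they define
\[
\Phi=(\Pa p_1,\Pa p_2)\colon \Pa(\Ba_1\times\Ba_2)\to \Pa\Ba_1\times\Pa\Ba_2 .
\]
This is already defined at the level of $\mathcal{P}$, where it sends $W$ into $W_1\times W_2$.

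\emph{Step 2: identify the target as a single localization.} I will use the standard fact that localization commutes with finite products when the classes of morphisms contain all identities. This holds here, since $(\id,1_I)\in W$. It gives
\[
\Pa\Ba_1\times\Pa\Ba_2\simeq(\mathcal{P}\Ba_1\times\mathcal{P}\Ba_2)[(W_1\times W_2)^{-1}].
\]
The statement then reduces to showing that $\widetilde\Phi\colon \mathcal{P}(\Ba_1\times\Ba_2)\to\mathcal{P}\Ba_1\times\mathcal{P}\Ba_2$ induces an equivalence of localizations.

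\emph{Step 3: padding and an inverse.} A path of length $m$ can be lengthened to length $m+2k$ by appending identity arrows at the end point. The collapsing functor $\mathbb{I}_{m+2k}\to\mathbb{I}_m$, which sends the tail to $m$, gives a morphism of $W$ from the padded path to the original one, so the padded path is isomorphic to it in $\Pa$. Given $(I_1,I_2)$ of lengths $m,n$, pad both to length $\max(m,n)$ (rounded to a common parity) and pair them into a path of $\Ba_1\times\Ba_2$. This shows that $\Phi$ is essentially surjective.

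For full faithfulness I plan to build a functor $\Psi$ on $\mathcal{P}\Ba_1\times\mathcal{P}\Ba_2$ with values in $\Pa(\Ba_1\times\Ba_2)$ and check that it inverts $W_1\times W_2$. Its induced functor should be quasi-inverse to $\Phi$, with the natural isomorphisms given by the padding maps in $W$.

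\emph{Main obstacle.} The hard part is defining $\Psi$ on a pair of morphisms $(F_1,\alpha_1)\colon I_1\to J_1$ and $(F_2,\alpha_2)\colon I_2\to J_2$, because $F_1$ and $F_2$ are different endpoint-preserving maps between zig-zags of different lengths. My first attempt is a common refinement. Since these functors are monotone surjections, I will choose a zig-zag $\mathbb{I}_N$ with endpoint-preserving surjections $\mathbb{I}_N\to\mathbb{I}_{m}$, $\mathbb{I}_N\to\mathbb{I}_{m'}$ (the source lengths) and $\mathbb{I}_M\to\mathbb{I}_n$, $\mathbb{I}_M\to\mathbb{I}_{n'}$ (the target lengths). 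These should be chosen so that there is a single $G\colon\mathbb{I}_N\to\mathbb{I}_M$ lifting both $F_1$ and $F_2$; this is a kind of fibre product of monotone maps, obtained by stuttering each step into identity segments. The reparametrized paths are isomorphic to the originals via $W$, and $(G,(\alpha_1,\alpha_2))$ is a genuine morphism between the paired reparametrized paths. I would then check that different refinements give the same morphism in the localization, by comparing any two refinements through a third. Compatibility with composition follows by the same argument, which is where most of the bookkeeping lies.
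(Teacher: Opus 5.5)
Your Steps 1 and 2 are fine. Essential surjectivity is also fine once you allow padding by a single identity arrow, as in the paper's odd-to-even lemma; padding only by $2k$ can never equalize lengths of different parity. The gap is full faithfulness, and it is fatal rather than a matter of bookkeeping, because $\Phi$ does not reflect isomorphisms. (The paper offers nothing beyond ``follows from the properties of localization'', which only covers your Step 2.)

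Take $\Ba_1=\Ba_2=[1]=\{0\to 1\}$ and $J=(0\to 1)$. Let $D=\bigl((0,0)\to(1,1)\bigr)$ and $P=\bigl((0,0)\to(1,0)\leftarrow(1,0)\to(1,1)\bigr)$. Then $\Pa p_1(P)=J\circ(0,1,1,1)$ and $\Pa p_2(P)=J\circ(0,0,0,1)$, where both reparametrizations are endpoint-preserving functors $\mathbb{I}_3\to\mathbb{I}_1$. So $\Phi(P)\cong(J,J)=\Phi(D)$ via a $W_1\times W_2$-map. But $P\not\cong D$ in $\Pa([1]\times[1])$:
\begin{itemize}
\item Over a poset, send $(F,\alpha)$ to $0$ if every component of $\alpha$ is an identity, and to $1$ otherwise. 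This is a functor from $\mathcal{P}$ to the monoid $(\{0,1\},\max)$, because $\beta_{F(k)}\alpha_k$ is an identity iff both factors are, and $F$ is surjective.
\item It sends exactly the morphisms of $W$ to $0$, so it factors through the localization. An isomorphism must go to $0$, so any word representing it uses only $W$-maps and their inverses.
\item A $W$-map $J\circ F\to J$ preserves the set of non-identity arrows traversed by the path, since $F$ hits every arrow of $\mathbb{I}_n$.
\item $P$ traverses $(0,0)\to(1,0)$ and $(1,0)\to(1,1)$, while $D$ traverses only $(0,0)\to(1,1)$. So no such zig-zag joins them.
\end{itemize}

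This is exactly where your refinement step breaks. Endpoint-preserving functors need not be monotone; the fold $(0,1,0,1)\colon\mathbb{I}_3\to\mathbb{I}_1$ is an example. The more serious problem is that two joint reparametrizations of the same pair need not be comparable. Over $(\mathbb{I}_1,\mathbb{I}_1)$, the spans $(\id,\id)$ on $\mathbb{I}_1$ and $\bigl((0,1,1,1),(0,0,0,1)\bigr)$ on $\mathbb{I}_3$ have no common refinement. Indeed, any $\mu\colon\mathbb{I}_L\to\mathbb{I}_3$ with $(0,1,1,1)\mu=(0,0,0,1)\mu$ would have to avoid $1$ and $2$, which is impossible for an endpoint-preserving functor.

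Hence your lifted morphism lands at $(J_1\tau_1,J_2\tau_2)$. In general this is not isomorphic in $\Pa(\Ba_1\times\Ba_2)$ to your chosen representative $\Psi(J_1,J_2)$. Applied to the isomorphism $\Phi(P)\to\Phi(D)$ above, $\Psi$ would have to produce an isomorphism $P\cong D$, which does not exist. So no quasi-inverse to $\Phi$ exists.

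Consequently the lemma, in the sense the paper later uses it (a path in $\Ba_1\times\Ba_2$ is ``the same as'' a pair of paths), does not hold for this localization. It cannot be rescued by more careful bookkeeping; a repair would need a different class $W$ or a different notion of morphism of paths.
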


\subsection{The path fibration and auxiliary lemmas}\label{subsec:motion_planning_functors}

From the construction of the path category we can also build two functors that relate the path category with the original one.

%\begin{defi}
    Let $\CC$ be a small category.  We define the functors $\pi_0^* \colon \Pa \CC \rightarrow \CC$ and $\pi_1^* \colon \Pa \CC \rightarrow \CC$ as follows. Let $I \colon \mathbb{I}_m \rightarrow \CC$ be a path of length $m$. Then $\pi^*_0(I)=I(0)$, $\pi^*_0(F,\alpha)=\alpha(0)$ and $\pi^*_1(I)=I(m)$, $\pi^*_1(F,\alpha)=\alpha(m)$. We define the {\em initial and final object functors} $\pi_0 \colon \Pa  \CC \rightarrow \CC$ and $\pi_1 \colon \Pa  \CC \rightarrow \CC$ by the universal property of the localization $\Pa \CC=\mathcal{P} \CC[W^{-1}]$ \cite{nlab:category_of_fractions}.  
We define the {\em endpoint functor} or {\em path fibration} $\pi \colon \Pa\CC \rightarrow \CC \times \CC$ as the product $(\pi_0,\pi_1)$.

%\end{defi}

\begin{lemma}\label{Conmutativo_caminos}
    Let $\CC$ and $\Dc$ be small categories and let $G \colon \CC \rightarrow \Dc$ be a functor. Then $\pi_j \circ \Pa G= G \circ \pi_j$ for $j=0,1$.
\end{lemma}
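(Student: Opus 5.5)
We prove the equality at the level of the unlocalized category $\mathcal{P}\CC$ and then transfer it to $\Pa\CC=\mathcal{P}\CC[W^{-1}]$ via the uniqueness part of the universal property of localization. Write $q_\CC\colon \mathcal{P}\CC\to \Pa\CC$ and $q_\Dc\colon \mathcal{P}\Dc\to\Pa\Dc$ for the localization functors. By construction $\pi_j\circ q_\CC=\pi_j^*$, and $\Pa G$ is the functor induced by the functor $\mathcal{P}G\colon \mathcal{P}\CC\to\mathcal{P}\Dc$, $I\mapsto G\circ I$, $(F,\alpha)\mapsto (F,1_G\alpha)$. In other words, $\Pa G\circ q_\CC=q_\Dc\circ \mathcal{P}G$.

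\textbf{Step 1: the equality before localizing.} We check that $\pi_j^*\circ \mathcal{P}G = G\circ \pi_j^*$ as functors $\mathcal{P}\CC\to\Dc$. On an object $I\colon\mathbb{I}_m\to\CC$ we get $\pi_0^*(G\circ I)=G(I(0))=G(\pi_0^*(I))$. Likewise $\pi_1^*(G\circ I)=(G\circ I)(m)=G(I(m))$, where we use that $\mathcal{P}G$ does not change the length $m$ of the path. On a morphism $(F,\alpha)\colon I\to J$ with $F\colon\mathbb{I}_m\to\mathbb{I}_n$ we get
\[
\pi_0^*(F,1_G\alpha)=(1_G\alpha)(0)=G(\alpha(0))=G(\pi_0^*(F,\alpha)),
\]
since the components of the whisker $1_G\alpha$ are $G(\alpha_i)$. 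The same computation at $m$ gives the equality for $\pi_1^*$. This step is a direct unwinding of the definitions.

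\textbf{Step 2: transfer to the localization.} Composing with $q_\CC$ gives
\[
\pi_j\circ \Pa G\circ q_\CC=\pi_j\circ q_\Dc\circ\mathcal{P}G=\pi_j^*\circ\mathcal{P}G = G\circ\pi_j^* = G\circ\pi_j\circ q_\CC.
\]
Both $\pi_j\circ\Pa G$ and $G\circ \pi_j$ are functors out of $\Pa\CC$ that agree after precomposing with $q_\CC$. The universal property of localization includes uniqueness: a functor out of $\mathcal{P}\CC[W^{-1}]$ is determined by its composite with $q_\CC$. Hence $\pi_j\circ \Pa G= G\circ\pi_j$.

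The only point requiring care is that the definitions of $\pi_j$ and $\Pa G$ through the universal property are consistent. First, $\pi_j^*$ must send $W$ to isomorphisms. A morphism $(F,1_I)$ in $W$ is sent to an identity, because $F(0)=0$ and $F(m)=n$ force the relevant components of $1_I$ to be identities. Second, $\mathcal{P}G$ must send $W_\CC$ into $W_\Dc$, which is the preceding Proposition. With these two facts, uniqueness of the induced functors gives the equalities above strictly, not merely up to natural isomorphism. This holds because we use the concrete zig-zag model of $\Pa\CC$, whose objects are the same as those of $\mathcal{P}\CC$.
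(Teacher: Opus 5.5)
Your proof is correct and follows the same approach as the paper: unwind the definitions and observe that applying $G$ commutes with evaluating a path at its endpoints. It is more complete than the paper's version. The paper checks the identity only on objects, whereas you also check it on morphisms of $\mathcal{P}\CC$ (via the components $G(\alpha_i)$ of the whiskering $1_G\alpha$). You then pass to $\Pa\CC$ through the uniqueness clause of the localization's universal property, which is what a strict equality of functors out of $\mathcal{P}\CC[W^{-1}]$ actually requires.
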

\begin{proof}
    we will show it for the case $j=0$ since the other case is analogous. Take a path $I \colon \mathbb{I}_m \rightarrow \CC$. We have that $\pi_0 (\Pa G)(I)=\pi_0 (G \circ I)=G (I(0))= G (\pi_0(I))=G \circ \pi_0(I)$.
    %In that case it follows from the fact that $\pi_0 \circ \Pa F (I)$ is the initial object in the path $\Pa F (I)$, i.e. is the object $F (I_0)$ where $I_0$ is the initial object of the path $I$ but this is the same as $ F \circ \pi_0 (0)$.
\end{proof}

The following lemma allows us to simplify the number of cases in future proofs and to define without losing generality some constructions applied to paths.

\begin{lemma}
    For each path $I \colon \mathbb{I}_m \rightarrow \CC$ with odd length $n$ there is a equivalent path $I' \colon \mathbb{I}_{m+1} \rightarrow \CC$ with even length.
\end{lemma}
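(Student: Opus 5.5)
We extend $I$ by one constant step. Since $m$ is odd, the last morphism of $\mathbb{I}_{m+1}$ goes $m \to m+1$, because $m$ is odd means the arrow is $i\leftarrow i+1$ only for odd $i$. Let us double-check the orientation. In $\mathbb{I}_{m+1}$ the arrow between $m$ and $m+1$ points $m \leftarrow m+1$ when $m$ is odd. Either way, we define $I' \colon \mathbb{I}_{m+1}\to \CC$ by $I'(i)=I(i)$ for $i\le m$ and $I'(m+1)=I(m)$. On arrows, $I'$ agrees with $I$ on the arrows of $\mathbb{I}_m \subset \mathbb{I}_{m+1}$ and sends the new arrow between $m$ and $m+1$ to $1_{I(m)}$. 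This is clearly a functor, since $\mathbb{I}_{m+1}$ is free on its generating arrows and has no nontrivial composites. The path $I'$ has even length $m+1$.

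Next we exhibit a morphism in $W$ relating $I'$ and $I$. Let $F\colon \mathbb{I}_{m+1}\to \mathbb{I}_m$ be the identity on $0,\dots,m$ and send $m+1\mapsto m$. It sends the last arrow to $1_m$, and this is a legitimate functor because the image of an arrow may be an identity. It satisfies $F(0)=0$ and $F(m+1)=m$, as required for morphisms in $\mathcal{P}\CC$. By construction $I\circ F = I'$ on objects and on generating arrows, hence $I\circ F=I'$ as functors. Therefore $(F,1_{I'})\colon I'\to I$ is a morphism of $\mathcal{P}\CC$ whose natural transformation is an identity, so it lies in $W$.

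Finally, since $\Pa\CC=\mathcal{P}\CC[W^{-1}]$, the localization functor sends $(F,1_{I'})$ to an isomorphism in $\Pa\CC$. Thus $I\cong I'$ in $\Pa\CC$, which is the meaning of ``equivalent'' here. As a consistency check, the initial and final object functors agree: $\pi_0(I')=I(0)=\pi_0(I)$ and $\pi_1(I')=I(m)=\pi_1(I)$. In fact $\pi_0^*$ and $\pi_1^*$ send the connecting isomorphism to identities, since its components at $0$ and at the endpoint are identities.

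The only delicate point is verifying that $F$ is a functor satisfying the endpoint conditions, which requires allowing $F$ to collapse an arrow to an identity. Everything else is immediate from the universal property of localization.
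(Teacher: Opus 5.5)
Your proof is correct and follows the paper's argument: you pad $I$ with an identity arrow $1_{I(m)}$ at the end and use the collapsing functor $F\colon\mathbb{I}_{m+1}\to\mathbb{I}_m$ with $F(m+1)=m$, so that $I\circ F=I'$ and $(F,1_{I'})\in W$ becomes an isomorphism in $\Pa\CC$. You also make explicit the membership in $W$ and the appeal to localization, which the paper leaves implicit; just delete your opening claim that the new arrow goes $m\to m+1$, since (as you then note yourself) for odd $m$ it is $m\leftarrow m+1$, matching the paper's diagram.
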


\begin{proof}
    We can take the path $I'$ as:
    $$% https://tikzcd.yichuanshen.de/#N4Igdg9gJgpgziAXAbVABwnAlgFyxMJZABgBpiBdUkANwEMAbAVxiRAEkAKYgShAF9S6TLnyEUARnJVajFmy4S+g4djwEiAJmnV6zVohAAdI1Ag4EKkBjViiAZh2z9CzgFtlQ6yPXjkAFic9eUMuDwEZGCgAc3giUAAzACcINyQyEBwIJAkrZNSkbUzsxFyvfLTEIqykezyUysDi2t05AxAJAH1gMJ5+CP4gA
\begin{tikzcd}
I(0) \arrow[r] & I(1) & \dots \arrow[l] \arrow[r] & I(m) & I(m) \arrow[l, "1_{I(m)}"]
\end{tikzcd}$$
In that case we have a functor $F \colon \mathbb{I}_{m+1} \rightarrow \mathbb{I}_m$ with $F(i)=i$ if $i \leq m+1$ and $F(m+1)=m$ with $F(i_m)=1_{I(m)}$.
\end{proof}

In the future we will always assume for simplicity that all path have an even length. With this in mind we can define the following two operations.

%\begin{defi}
    Let $I \colon \mathbb{I}_m \rightarrow \CC$ and $J \colon \mathbb{I}_n \rightarrow \CC$ be two paths with $m$ and $n$ even and such that $I(m)=J(0)$. We define the {\em inverse path} to $I$, $I^{-1} \colon \mathbb{I}_m \rightarrow \CC$, as $I(i)=I(m-i)$. We define the {\em concatenation} of $I$ and $J$, $J \circ I \colon \mathbb{I}_{m+n} \rightarrow \CC$, as $J \circ I(i)= I(i)$ if $i \leq m$ and $J \circ I(i)= J(i-m)$ if $i \geq m$.
%\end{defi}

\section{Fibrations}\label{sec:fibrations}

\subsection{Homotopy between functors}
We introduce two notions of homotopy between functors. We begin by recalling the notion of homotopy (we refer to it as {\em strong homotopy}) in small categories introduced by Lee (\cite{LEE}). Then we introduce a novel and weaker notion of homotopy (we refer to it as {\em weak homotopy}) inspired on the one introduced by Hoff \cite{HOFF} but with the difference that we work with a different category of paths and explained in Example \ref{ex:path_category_different}.

\begin{defi}
    Let $F,G\colon \CC\to \Dc$ be two functors between small categories. We say that $F$  and $G$ are {\em strong homotopic}, denoted by $F \simeq G$,  if, for some $m\geq 0$, there exists a functor $H\colon \CC \times \mathbb{I}_m \rightarrow \D$, called a homotopy (of length $m$), such that $H(-,0)=F$ and $H(-,m)=G$. We will denote the fact that two functors $F$ and $G$ are strong homotopic with $F \simeq G$. %Alternatively, the functors $F,G\colon \CC\to \Dc$  are strong homotopic  if there is a finite sequence of functors $F_0,\dots,F_m\colon \CC\to \D$, with $F_0=F$  and $F_m=G$, such that for each $i\in \{0,\ldots,m-1\}$ there is a natural transformation either between $F_i$ and $F_{i+1}$ or between $F_{i+1}$ and $F_i$ (see \cite{CMM}).
\end{defi}

Strong homotopy between functors proves to be too rigid for developing motion planning in small categories since with this we do not have the usual equivalence between homotopic sections to $\Delta \colon \CC \rightarrow \CC \times \CC$ and sections to the path fibration $\pi \colon \Pa \CC \rightarrow \CC$. That is why  we introduce weak homotopies in the spirit of \cite{HOFF} and \cite{Simplicial_fibrations}.

\begin{defi}
 Let $F,G\colon \CC\to \Dc$ be two functors between small categories. The functors $F$ and $G$ are weak homotopic, denoted $F \simeq_w G$, (we also say there is a weak homotopy between them) if there exists a functor $H \colon \CC \rightarrow \mathrm{P} \Dc $ such that $\pi_0 \circ H=F$ and $\pi_1 \circ H=G$.  
\end{defi}

Observe that being strong or weak homotopic defines an equivalence relation among functors. 

\begin{comment}

\begin{prop}
    Being strong or weak homotopic is an equivalence relation 
\end{prop}

\begin{proof}
    The part of being strong follows from the fact that strong homotopies can be concatenated and inverted \cite{TANAKA}. For the case of weak homotopies we can begin by the fact that, clearly, $F \simeq_w F$ since we can take the homotopy $H \colon \CC \rightarrow \Pa \Dc$ given by taking every object $c$ to the constant path of length $0$ with object $F(c)$ and a morphism $f$ to the correspondent natural transformation associated to it.

    Now suppose that $F \simeq_w G$ with a homotopy $H \colon \CC \rightarrow \Pa \Dc$ we define $H' \colon \CC \rightarrow \Pa \Dc$ as $H'(c)$ the opposite path. 

    Finally suppose that $F \simeq_w G $ and $G \simeq_w P$ with homotopies $H_1$ and $H_2$. We define a homotopy $H \colon \CC \rightarrow \Pa \Dc$ as $H(c)$ as the composition of the path $H_1$ and $H_2$.
\end{proof}
    
\end{comment}

\begin{defi}
    Two small categories $\CC$ and $\Dc$ are {\em strong} (respectively {\em weak}) homotopy equivalent or {\em strong} (respectively {\em weak}) homotopic  if there are exist functors $F \colon \CC \rightarrow \Dc$ and $G \colon \Dc \rightarrow \CC$ such that $G \circ F \simeq \id_{\CC}$ and $F \circ G \simeq \id_{\Dc}$ (respectively $G \circ F \simeq_w \id_{\CC}$ and $F \circ G \simeq_w \id_{\Dc}$). The small category $\CC$  is {\em contractible} (respectively {\em weak contractible}) if it is strong homotopic (respectively weak homotopic) to the trivial category $\bullet$ with only one object and one morphism.
\end{defi}

\begin{prop}\label{prop:Strong_Homotopic_entails_weak_Homotpic}
    \begin{enumerate}
       \item If two functors are strong homotopic then they are also weak homotopic.
       \item As a consequence, if two categories are homotopic then they are also weak homotopic.
    \end{enumerate}   
\end{prop}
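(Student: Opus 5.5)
The idea is to turn a strong homotopy into a weak one by \emph{currying}. Let $H\colon \CC\times\mathbb{I}_m\to\Dc$ be a strong homotopy from $F$ to $G$. By the lemma on odd lengths we may, if needed, lengthen $H$ by a trailing identity so that $m$ is even; this only makes the bookkeeping uniform and is not essential. We then build a functor $\widehat{H}\colon \CC\to \mathcal{P}\Dc$ and compose it with the localization functor $\mathcal{P}\Dc\to \Pa\Dc$.

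The functor $\widehat{H}$ is defined as follows.
\begin{itemize}
\item On an object $c$, set $\widehat{H}(c)=H(c,-)\colon \mathbb{I}_m\to\Dc$, which is a path of length $m$.
\item On a morphism $f\colon c\to c'$, set $\widehat{H}(f)=(\id_{\mathbb{I}_m},\alpha_f)$, where $(\alpha_f)_i=H(f,1_i)\colon H(c,i)\to H(c',i)$.
\end{itemize}
Naturality of $\alpha_f$ with respect to each arrow $u$ of $\mathbb{I}_m$ follows from functoriality of $H$ on $\CC\times\mathbb{I}_m$:
$$H(1_{c'},u)\circ H(f,1_i)=H(f,u)=H(f,1_j)\circ H(1_c,u).$$
The identity $\id_{\mathbb{I}_m}$ fixes $0$ and $m$, so $(\id,\alpha_f)$ is a legitimate morphism of $\mathcal{P}\Dc$. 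Functoriality of $\widehat{H}$ (identities and composition) again comes from functoriality of $H$, since composition in $\mathcal{P}\Dc$ with identity reindexing functors is just vertical composition of natural transformations.

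Next we check the endpoints. Through the localization we have $\pi_0=\pi_0^*$ on objects and on morphisms coming from $\mathcal{P}\Dc$. Hence
$$\pi_0\widehat{H}(c)=H(c,0)=F(c), \qquad \pi_0\widehat{H}(f)=(\alpha_f)_0=H(f,1_0)=F(f),$$
and likewise $\pi_1\circ\widehat{H}=G$ using the index $m$. This gives a weak homotopy $F\simeq_w G$, which proves (1). The only point needing care is that $\pi_j$ restricted along the localization functor agrees with $\pi_j^*$. This is exactly the defining property of $\pi_j$ obtained from the universal property, so I expect no real obstacle here; the main bookkeeping is the naturality check above.

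Part (2) is immediate from (1). If $G\circ F\simeq \id_{\CC}$ and $F\circ G\simeq\id_{\Dc}$, then by (1) the same relations hold with $\simeq_w$, so the same pair of functors witnesses a weak homotopy equivalence. In particular, contractible categories are weakly contractible.
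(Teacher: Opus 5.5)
Your proposal is correct and is essentially the paper's argument: the paper also curries the strong homotopy $H\colon \CC\times\mathbb{I}_n\to\Dc$ into $H'(c)=H(c,-)$, viewed as a functor $\CC\to\Pa\Dc$. You additionally supply the details the paper leaves implicit: the action $f\mapsto(\id,H(f,1_{-}))$ on morphisms, its naturality and functoriality, and the agreement of $\pi_j$ with $\pi_j^*$ along the localization; part (2) then follows exactly as you say.
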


\begin{proof}
    Suppose that $F,G \colon \CC \rightarrow \Dc$ are strong homotopic and $H \colon \CC \times \mathbb{I}_n \rightarrow \Dc$ is a homotopy between then. We define the weak homotopy $H' \colon \CC \rightarrow \Pa \Dc$ defined as $H'(c)=H(c,-)$.
\end{proof}

\begin{comment}
\begin{corollary}
    If two Categories are homotopic then they are also weak homotopic.
\end{corollary}   
\end{comment}

The converse claim does not hold in general, as the following example shows.

\begin{example}\label{Zig-zag_infinity}
    Let us consider the  category $\mathbb{I}$ generated by the diagram:
    $$
    % https://tikzcd.yichuanshen.de/#N4Igdg9gJgpgziAXAbVABwnAlgFyxMJZABgBpiBdUkANwEMAbAVxiRGJAF9T1Nd9CKAIzkqtRizZCuPEBmx4CRAEyjq9Zq0QhlM3goFEAzGvGa2RvXL6LByACymNk7QDp3XMTCgBzeEVAAMwAnCABbJDIQHAgkIW4g0IjEVWjYxHjZEPCkVJikIwSQbOTHNILOCk4gA
\begin{tikzcd}
0 \arrow[r] & 1 & 2 \arrow[l] \arrow[r] & 3 & ... \arrow[l]
\end{tikzcd}$$
   We will show that $\mathbb{I}$ is not strong contractible but is weak contractible. 
   
   First, we show that $\mathbb{I}$ is weak contractible. A functor $F \colon \mathbb{I} \rightarrow \bullet$ takes every object to the object of $\bullet$ and every morphism to the identity. Consider the functor $0 \colon \bullet \rightarrow \mathbb{I}$ that takes the object of $\bullet$ to the object $0$. We have that $F \circ 0= \id_{\bullet}$. There is a weak homotopy between $\id_{\mathbb{I}}$ and $ 0 \circ F$ given by taking every object $n \in \mathbb{I}$ to the path of length $n$ from $0$ to $n$.

    Second, we show that $\mathbb{I}$ is not strong contractible. Suppose without loss of generality that there was a strong homotopy $H \colon \mathbb{I} \times \mathbb{I}_m \rightarrow \mathbb{I}$ such that $H(-,0)$ is the constant functor that takes everything to $0$ and $H(-,m)=\id_{\mathbb{I}}$. In that case for every object $n$ we would have a path of length $m$ from $0$ to $n$ but that is impossible if $n>m$.
\end{example}

Nevertheless, the example suggests a sufficient condition for weak homotopies being strong homotopies: finiteness of the domain. 

\begin{prop}\label{weak_Strong_Finite}
    Let $F,G \colon \CC \rightarrow \Dc$ be two weak homotopic functors between small categories. If $\CC$ is finite (the set of morphisms is finite), then $F$ and $G$ are strong homotopic.
\end{prop}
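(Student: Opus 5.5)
The plan is to turn the weak homotopy $H\colon \CC\to \Pa\Dc$ into a single functor $\widetilde H\colon \CC\times\mathbb{I}_N\to\Dc$ for some $N$ large enough, using finiteness of $\CC$ to bound all the lengths involved. Equivalently, I aim to produce a finite chain of functors $F=F_0,F_1,\dots,F_N=G\colon\CC\to\Dc$ with natural transformations $F_i\Rightarrow F_{i+1}$ for $i$ even and $F_{i+1}\Rightarrow F_i$ for $i$ odd. This is exactly the data of a functor $\CC\times\mathbb{I}_N\to\Dc$ with $\widetilde H(-,0)=F$ and $\widetilde H(-,N)=G$.

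First I record the data of $H$. For each object $c$ there is a path $H(c)\colon\mathbb{I}_{m_c}\to\Dc$. For each morphism $f\colon c\to c'$ of $\CC$ there is a morphism $H(f)$ of $\Pa\Dc=\mathcal{P}\Dc[W^{-1}]$, represented by a finite zig-zag of morphisms of $\mathcal{P}\Dc$ in which the backward arrows lie in $W$, i.e. are reparametrisations $(F,1)$. Since $\CC$ has finitely many morphisms, only finitely many such zig-zags and lengths occur. Let $N$ be an even integer that is at least every $m_c$ and at least the length of every intermediate path appearing in the chosen representatives.

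The second step is normalisation. Each path $I$ of length $m\le N$ admits a canonical reparametrisation $\rho\colon\mathbb{I}_N\to\mathbb{I}_m$ fixing $0$ and sending $N$ to $m$, which repeats the last object with identities, as in the lemma on odd/even length. Then $(\rho,1)\colon I\circ\rho\to I$ lies in $W$ and becomes an isomorphism in $\Pa\Dc$. Replacing every path by its padded version of length $N$, each forward arrow $(F,\alpha)\colon I\to J$ in a zig-zag becomes a morphism between length-$N$ paths. I then want to rewrite its first component $F\colon\mathbb{I}_N\to\mathbb{I}_N$ as the identity, up to morphisms of $W$. The idea is to absorb the reparametrisation into the target: $\alpha\colon I\Rightarrow J\circ F$ is an honest natural transformation between length-$N$ paths, and $(F,1)\colon J\circ F\to J$ lies in $W$. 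Doing this for every arrow, each $H(f)$ becomes a composite of honest natural transformations between length-$N$ functors $\mathbb{I}_N\to\Dc$ and inverses of $W$-arrows. Inverses of $W$-arrows between padded paths of the same length can then be cancelled or re-expressed as identities after a further refinement. Because there are only finitely many arrows, all these refinements can be carried out at one common length $N'$.

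Finally, I define $\widetilde H(c,i)=\widehat{H(c)}(i)$, where $\widehat{H(c)}$ is the normalised path of length $N'$, and $\widetilde H(f,i)$ as the $i$-th component of the resulting natural transformation. Evaluating at $0$ and $N'$ recovers $\pi_0H=F$ and $\pi_1H=G$, since $\pi_0$ and $\pi_1$ are computed by the components $\alpha(0)$ and $\alpha(m)$.

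The main obstacle is functoriality. The rectified transformations must satisfy $\widetilde H(g\circ f,-)=\widetilde H(g,-)\circ\widetilde H(f,-)$, but a composite of chosen zig-zag representatives is only equivalent, and not equal, to the chosen representative of the composite. To handle this, I would first try to choose representatives along the nerve of $\CC$, which is finite, fixing $H(f)$ for the non-identity morphisms and then checking composites. Should this fail, I would fall back on the equivalence-relation description of strong homotopy: it then suffices to connect $F$ to $G$ by finitely many natural transformations, which can be built one zig-zag position at a time. Again, finiteness of $\CC$ guarantees that only finitely many steps are needed.
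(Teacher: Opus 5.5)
\textbf{There is a genuine gap.} Your plan follows the same route as the paper's proof. You use finiteness of $\CC$ to bound the lengths of the paths $H(c)$ and of the intermediate paths in chosen representatives of the $H(f)$. You then pad everything to one length and read off a functor $\CC\times\mathbb{I}_N\to\Dc$. The paper disposes of the decisive step with ``taking some adequate equivalent paths for each object'', and your proposal stops at the same point. What you never produce is, for each object $c$, a single path of fixed length together with natural transformations $\widetilde H(f,-)$ that are strictly functorial in $f$. The mechanism you sketch for this step does not work.

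\textbf{Why the normalisation fails.} Once all paths have length $N$, there are no reparametrisations left to remove. A functor $\mathbb{I}_N\to\mathbb{I}_N$ fixing $0$ and $N$ moves by at most one at each step, so it is the identity. Hence the only $W$-arrows between padded paths of equal length are identities, and there is nothing to ``cancel''.

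The $W$-arrows in a representative zig-zag connect paths of \emph{different} lengths. A forward arrow $(F_f,\alpha_f)\colon H(c)\to H(c')$ only yields the whiskered transformation $\alpha_f\sigma\colon H(c)\circ\sigma\Rightarrow H(c')\circ F_f\circ\sigma$. Its target is a reparametrisation of $H(c')$ that depends on $f$. So ``absorbing the reparametrisation into the target'' is a choice made per morphism, not per object.

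\emph{Example.} Let $f,f'\colon c\to c'$ be parallel and let $H(c')$ be injective on objects. Suppose the representatives have $F_f,F_{f'}\colon\mathbb{I}_4\to\mathbb{I}_2$ given on objects by $(0,1,2,1,2)$ and $(0,1,0,1,2)$.
\begin{itemize}
\item No endpoint-preserving $\sigma,\tau$ make both $\alpha_f\sigma$ and $\alpha_{f'}\sigma$ land in $H(c')\circ\tau$.
\item That would force $F_f\circ\sigma=\tau=F_{f'}\circ\sigma$.
\item Since $\sigma$ is surjective on objects, this gives $F_f=F_{f'}$, a contradiction.
\end{itemize}
This does not refute the statement. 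It shows that padding and re-targeting alone cannot produce the strong homotopy.

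\textbf{Why functoriality is unresolved.} Chosen representatives of $H(g)$, $H(f)$ and $H(g\circ f)$ agree only up to the equivalence relation defining morphisms of the localisation. A strong homotopy, however, needs $\widetilde H(g\circ f,i)=\widetilde H(g,i)\circ\widetilde H(f,i)$ exactly.

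Your fallback does not bypass this. A zig-zag of natural transformations between functors $F_0,\dots,F_N$ is the same data as a functor $\CC\times\mathbb{I}_N\to\Dc$. Each $F_i=\widetilde H(-,i)$ must itself be a functor, which is precisely the coherence still in question.

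\textbf{What is missing.} You need a genuine rectification argument with two parts:
\begin{itemize}
\item a common-refinement lemma for endpoint-preserving functors $\mathbb{I}_a\to\mathbb{I}_k$, already needed when $\CC$ is the arrow category $0\to 1$ to glue the pieces of a zig-zag representative;
\item a way to enforce exact compatibility with composition and relations in $\CC$.
\end{itemize}
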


\begin{proof}
    Suppose that we have a weak homotopy $H \colon \CC \rightarrow \Pa \Ba$ such that $\pi_0 \circ H=F$ and $\pi_1 \circ H=G$. We have that since $\CC$ have a finite number of objects and morphism there is maximum lenght $m$ for the paths $H(c)$ and the intermediate path that appears in the morphism between them. We can then define a strong homotopy by taking $H^* \colon \CC \times \mathbb{I}_m \rightarrow \Dc$ to be a functor defined by taking some adequate equivalent paths for each object $c$ in $\CC$.
\end{proof}

\subsection{Notions of fibrations}

In order to  introduce categorical analogues of fibrations, we adapt the classical lifting properties from topology to our context. This yields two notions: strong fibrations, modeled on strong homotopies, and weak fibrations, based on weak homotopies.

\begin{defi}\label{Strong_Homotopy}
 Let $P\colon \E \to \Ba$ be a functor between small categories. \begin{enumerate}
     \item The functor $P$ is a {\em (finite) strong fibration} if for any (finite) small category $\CC$ and commutative diagram:
    \begin{equation*}
\begin{tikzcd}
\CC \arrow[d,"i_0"'] \arrow[r,"G"] & \E  \arrow[d,"P"] \\
\CC\times \mathbb{I}_m \arrow[r,"H"']                   & \Ba           
\end{tikzcd}
\end{equation*}
where $i_0(c)=(c,0)$ and $H$ is a strong homotopy, there exists a strong homotopy $\wtilde H \colon \CC\times \mathbb{I}_m \to\E$ such that the following diagram is commutative:
\begin{equation*}
\begin{tikzcd}
\CC \arrow[d,"i_0"'] \arrow[r,"G"] & \E  \arrow[d,"P"] \\
\CC\times \mathbb{I}_m \arrow[ur,"\wtilde H",dashed]\arrow[r,"H"']                   & \Ba           
\end{tikzcd}
\end{equation*}
\item The functor $P$ is a {\em (finite) weak fibration} if for every weak homotopy $H \colon \CC \rightarrow \mathrm{P} \Ba$ (where $\CC$ is finite) and functor $F \co \CC \rightarrow \E$ such that $\pi_0 \circ H=G$ there exists a weak homotopy $\wtilde{H}: \CC \rightarrow \mathrm{P}\E$ such that the following diagram is commutative:
            $$
% https://tikzcd.yichuanshen.de/#N4Igdg9gJgpgziAXAbVABwnAlgFyxMJZABgBoBGAXVJADcBDAGwFcYkQAdDgYW5AF9S6TLnyEU5CtTpNW7LgFt6OABYAnBcAAK-AARcAQvQFCQGbHgJFJxaQxZtEnDktUbt-LgFETwi2KIAJlJbGns5J29fMxFLcWRgqjDZR2cjAWkYKABzeCJQADM1CAUkMhAcCCRJGQd2AAloopLqmkqkABZkusiONCwAfWIm4tLEcvbEYNqI5xVlYHr+EZaptqrEGvDUxWV1TR1dLRWxgGZ1zu7Z48FC0aRpyfOZnY4AI3o1YC5+-iGQGiMehvGCMLSxAJONRYbIqHAnMoXRDPEFgKBIABs5W28g48xwwAAYst+JR+EA
\begin{tikzcd}
                                                                              & \mathrm{P}\E \arrow[d, "\mathrm{P} P"] \arrow[r, "\bar{\pi}_0"'] & \E \arrow[d, "P"] \\
\CC \arrow[r, "H"] \arrow[ru, "\wtilde{H}"] \arrow[rru, "{F}", bend left=60] & \mathrm{P} \Ba \arrow[r, "\pi_0"]                                & \Ba              
\end{tikzcd}
    $$
Moreover, $P\colon \E \to \Ba$ {\em preserves length} if there exists a weak homotopy $\wtilde{H}: \CC \rightarrow \mathrm{P}\E$ verifying the additional condition:  if $H(c) \colon \mathbb{I}_m \rightarrow \Ba$ is of lenght $m$, then $\wtilde{H}(c)$ is isomorphic to a path $I \colon \mathbb{I}_m \rightarrow \E$ of legth $m$. Under this circumstances we say that the functor functor $P$ is a {\em length-preserving weak fibration}. 
 \end{enumerate}
 In both cases we say that $\wH$ is a {\em lift} of $H$.
\end{defi}

\begin{remark}
    A functor that is a fibration and an op-fibration is a strong fibration (see \cite[Proposition 2.7]{Cohomologysvarcgenus}). Nevertheless, the converse is false (see \cite[Proposition 3.1]{CMM}).
\end{remark}

\begin{teo}\label{under_length_preserving_strong_weak_equivalence}
    Let $P \colon \E \rightarrow \Ba$ be a functor. \begin{enumerate}
        \item If the functor $P \colon \E \rightarrow \Ba$ is a length-preserving weak fibration, then it is a strong fibration.
        \item The functor $P \colon \E \rightarrow \Ba$ is a finite strong fibration iff it is a finite length-preserving weak fibration.
    \end{enumerate}
\end{teo}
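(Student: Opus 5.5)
The plan is to move back and forth between the two kinds of homotopy data. A strong homotopy $H\colon \CC\times\mathbb{I}_m\to\Ba$ corresponds to a weak homotopy $H'\colon \CC\to\Pa\Ba$ with $H'(c)=H(c,-)$, exactly as in the proof of Proposition~\ref{prop:Strong_Homotopic_entails_weak_Homotpic}. Here every path has the fixed length $m$, and every morphism $f\colon c\to c'$ goes to the morphism $(\id_{\mathbb{I}_m},H(f,-))$ of $\mathcal{P}\Ba$. In the other direction, a weak homotopy all of whose paths and connecting morphisms have a common length $m$ gives a functor $\CC\times\mathbb{I}_m\to\Ba$. Lifting problems of one kind then become lifting problems of the other kind, and length preservation is what keeps this translation going in the reverse direction.

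For (1), let $P$ be a length-preserving weak fibration, and take $G\colon\CC\to\E$ and a strong homotopy $H\colon\CC\times\mathbb{I}_m\to\Ba$ with $H\circ i_0=P\circ G$.
\begin{enumerate}
\item Form $H'$ as above, so that $\pi_0\circ H'=P\circ G$.
\item Apply the weak lifting property to obtain $\wtilde H\colon\CC\to\Pa\E$ with $\pi_0\circ\wtilde H=G$ and $\Pa P\circ\wtilde H=H'$. By length preservation, each $\wtilde H(c)$ is isomorphic to a path $\wtilde H_c\colon\mathbb{I}_m\to\E$.
\item Replace $\wtilde H$ by the isomorphic functor $c\mapsto\wtilde H_c$. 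We need these isomorphisms to be compatible with $\pi_0$ and with $\Pa P$; I would choose them over the identity reparametrisation, using that they arise from morphisms in $W$.
\item Show that each morphism $\wtilde H(f)\colon\wtilde H_c\to\wtilde H_{c'}$ in the localisation is represented by a genuine morphism $(\id_{\mathbb{I}_m},\beta_f)$ of $\mathcal P\E$, with $\beta_f$ a natural transformation. Then $\wtilde H^*(c,i)=\wtilde H_c(i)$ and $\wtilde H^*(f,i)=\beta_f(i)$ define a functor $\CC\times\mathbb{I}_m\to\E$. It is the required strong lift, since $\wtilde H^*\circ i_0=G$ and $P\circ\wtilde H^*=H$.
\end{enumerate}

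Step~4 is the main obstacle. A morphism of $\Pa\E$ between two paths of length $m$ is a zig-zag through $W$, which may pass through longer paths. The idea for handling it is that its image under $\Pa P$ equals $H'(f)=(\id,H(f,-))$, which is a genuine length-$m$ morphism. I would first try to show that composing a span $I\leftarrow J\to K$ with the leg $J\to I$ in $W$ collapses to a morphism $I\to K$ of the form $(F,\alpha)$. Here one uses that a $W$-morphism $(F,1)$ identifies $J=I\circ F$, and that a surjective monotone $F$ has a section. Once the zig-zag is reduced to a single morphism $(F,\alpha)$ with $F\colon\mathbb{I}_m\to\mathbb{I}_m$ endpoint-preserving, $F$ must be the identity, because it is an endpoint-preserving functor between zig-zags of equal length.

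For (2), the direction from finite length-preserving weak fibration to finite strong fibration is part (1) restricted to finite $\CC$. For the converse, let $P$ be a finite strong fibration, and take a weak homotopy $H\colon\CC\to\Pa\Ba$ with $\CC$ finite together with $F\colon\CC\to\E$ such that $\pi_0\circ H=P\circ F$.
\begin{enumerate}
\item Apply Proposition~\ref{weak_Strong_Finite}, together with the padding lemma that extends paths by identities, to rectify $H$ to a strong homotopy $H^*\colon\CC\times\mathbb{I}_m\to\Ba$, where $m$ is the maximal length occurring. Arrange the padding so that $H^*(c,-)$ is isomorphic in $\Pa\Ba$ to $H(c)$ through $W$-morphisms, naturally in $c$.
\item Lift $H^*$ to $\wtilde{H^*}\colon\CC\times\mathbb{I}_m\to\E$ with $\wtilde{H^*}\circ i_0=F$.
\item Set $\wtilde H(c)=\wtilde{H^*}(c,-)$. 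Then $\wtilde H$ satisfies $\pi_0\circ\wtilde H=F$, and $\Pa P\circ\wtilde H$ agrees with $H$ up to the padding isomorphisms.
\item For the length-preserving condition, when $H(c)$ has length $m_c<m$, the padded lift restricted to the first $m_c$ steps gives an isomorphic path of length $m_c$, provided the lifts of the identity padding arrows are identities. If they are not, I would modify the lift on those arrows by a second application of the lifting property with the shorter homotopy.
\end{enumerate}
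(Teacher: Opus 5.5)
Your route is the paper's: turn $H$ into $H'(c)=H(c,-)$, lift it, and read a functor $\CC\times\mathbb{I}_m\to\E$ off the lift. The paper does the last step by simply writing $\wtilde H^*(f)=(F_f,\alpha_f)$, so it silently assumes what your Step~4 sets out to prove. You are right that this is the crux, and right that an endpoint-preserving functor $\mathbb{I}_m\to\mathbb{I}_m$ is the identity. However, your reduction does not work, and the claim itself is false for an arbitrary lift.

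A section of a $W$-functor $G\colon\mathbb{I}_p\to\mathbb{I}_q$ would have to be an endpoint-preserving functor $\mathbb{I}_q\to\mathbb{I}_p$. Since every arrow moves at most one step, such functors exist only when $q\geq p$, so only $G=\id$ has a section. Indeed $\mathcal{P}\E$ has no morphism from a path to a strictly longer one. Also, $W$-functors need not be monotone, e.g.\ $G=(0,1,2,1,0,1,2)\colon\mathbb{I}_6\to\mathbb{I}_2$.

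For a counterexample to Step~4, let $I=(x\xrightarrow{a}y\xleftarrow{b}z)$ and $K=(u\xrightarrow{k}v\xleftarrow{l}w)$, with $G$ as above and $F=(0,1,1,1,1,1,2)$. A natural transformation $\alpha\colon I\circ G\Rightarrow K\circ F$ amounts to $\alpha_0\colon x\to u$, $\alpha_1,\alpha_3,\alpha_5\colon y\to v$ and $\alpha_6\colon z\to w$, subject to $k\alpha_0=\alpha_1a$, $\alpha_1b=\alpha_3b$, $\alpha_3a=\alpha_5a$ and $\alpha_5b=l\alpha_6$. Let $\E$ be presented by exactly these generators and relations. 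Then $\gamma=(F,\alpha)\circ(G,1)^{-1}\colon I\to K$ has $\pi_0(\gamma)=\alpha_0$. A genuine $(\id,\beta)\colon I\to K$ with $\beta_0=\alpha_0$ would force $\beta_1=\alpha_1$, because $\mathrm{Hom}(y,v)=\{\alpha_1,\alpha_3,\alpha_5\}$. It would then require $l\beta_2=\alpha_1b\neq l\alpha_6$, which is impossible since $\mathrm{Hom}(z,w)=\{\alpha_6\}$. So $\gamma$ has no genuine representative.

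Knowing that $\Pa P(\gamma)$ is genuine gives no leverage. For $P\colon\E\to\bullet$, every morphism of $\mathcal{P}\bullet$ lies in $W$ and the path of length $0$ is terminal, so $\Pa\bullet$ has exactly one morphism between any two objects. Take $\CC=\{c\xrightarrow{f}d\}$. Then $c\mapsto I$, $d\mapsto K$, $f\mapsto\gamma$ is a legitimate length-preserving weak lift of the length-$2$ strong homotopy $\CC\times\mathbb{I}_2\to\bullet$ with initial lift $f\mapsto\alpha_0$. Yet no functor $\CC\times\mathbb{I}_2\to\E$ restricts to $I$ on $c$, to $K$ on $d$ and to $\alpha_0$ on $(f,0)$.

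What is missing is a lift made of genuine length-$m$ paths and genuine morphisms $(\id,\beta_f)$ of $\mathcal{P}\E$. Your Step~3 additionally needs identifications compatible with $\pi_0$ and $\Pa P$. The definition of a length-preserving weak fibration only asks for an isomorphism in $\Pa\E$ to some path of length $m$, so it does not supply either. Neither your proposal nor the paper shows that such a lift can be chosen.

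In (2), deriving the direction ``finite length-preserving weak $\Rightarrow$ finite strong'' from (1) is cleaner than the paper's repadding, which yields a homotopy of length $M$ rather than the length $m$ of $H$. It does, however, inherit the gap above.

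In the other direction, your Step~4 has the same hole as the paper's last sentence. A strong lift of the padded homotopy need not send the padding arrows to identities, and then $\wtilde H(c)$ need not be isomorphic in $\Pa\E$ to any path of length $m(c)$. Until this is fixed, $\Pa P\circ\wtilde H=H$ holds only up to isomorphism, as you note, not on the nose as the definition requires. Lifting again ``with the shorter homotopy'' is not available. The lengths $m(c)$ vary with $c$, so there is no single shorter strong homotopy on $\CC$ to lift; that is why you padded in the first place.

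Step~1 also needs more than end-padding. For $H^*$ to be a functor on $\CC\times\mathbb{I}_M$ you need reparametrisations $R_c\colon\mathbb{I}_M\to\mathbb{I}_{m(c)}$ with $F_f\circ R_c=R_d$ when $H(f)=(F_f,\alpha_f)$ is genuine, and more care when $H(f)$ is a zig-zag. The paper's version of this step fails in the same way: its $H^*(f,\id_i)$ lands in $H_d^*(\overline F_f(i))$ rather than $H_d^*(i)$.
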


\begin{proof}
We begin by proving (1). Let $H \colon \CC \times \mathbb{I}_m \rightarrow \Ba$ be a strong homotopy and let $F \colon \CC \rightarrow \E $ be a functor such that $P \circ F=H(c,0)$. The strong homotopy $H \colon \CC \times \mathbb{I}_m \rightarrow \Ba$ induces a functor $H^* \colon \CC \rightarrow \Pa\Ba$ given by $H^*(c)= H(c,-) \colon \mathbb{I}_m \rightarrow \Ba$. Observe that all paths have length $m$. Moreover, $\pi_0 \circ H^*= P \circ G$ since $\pi_0 \circ H^*(c)=H^*(c)(0)=H(c,0)$. The weak fibration condition guarantees that we can lift $H^*$ to a functor $\wtilde{H}^* \colon \CC \rightarrow \Pa \E$. Furthermore, this lift is length-preserving and since all paths have length $m$, we can define a lift for $H$ given by $\wtilde{H} \colon \CC  \times \mathbb{I}_m \rightarrow \E$ given as follows: $\wtilde{H}(c,i)=\wtilde{H}^*(c)(i)$ and for any morphism $f \colon c \rightarrow d$ we have that $\wtilde{H}(f,1_i)=\alpha_f(i)$ where $\alpha_f$ is the natural transformation appearing in $\wtilde{H}^*(f)=(F_f,\alpha_f)$.

We prove (2). First, we prove that a finite strong fibration is a finite weak fibration. Let $\CC$ be a finite small category, let $H\colon\CC \to \Pa \Ba$ be a weak homotopy, and let $F\colon \CC\to \E$ be a functor such that $P\circ F=\pi_0\circ H$. We must construct $\widetilde H\colon \CC\to \Pa \E$ with
\[
\Pa P\circ \widetilde H = H \qquad\text{and}\qquad \pi_0\circ \widetilde H = F,
\]
and such that $\widetilde H(c)$ has the same length as $H(c)$ for every onject $c$ in $\CC$.

For each object $c$ in $\CC$, we write $H(c)$ as $H_c\colon \mathbb{I}_{m(c)}\to \Ba$ with $H_c(0)=(P\circ F)(c)$. Since $\CC$ is finite, the finite set $\{m(c)\mid c\in C\}$ admits a maximum; denote it by
\[
M \;=\; \max_{c\in C} m(c).
\]
For each $c$ choose the endpoint–preserving inclusion $\iota_c:\mathbb{I}_{m(c)}\hookrightarrow \mathbb{I}_M$ (obtained by padding $H_c$ by repeating endpoints when needed). Define the \emph{padded} path $H_c^\ast \colon = H_c\circ \iota_c \colon \mathbb{I}_M\to \Ba$.

We now define a functor $H^\ast \colon \CC \times \mathbb{I}_M \longrightarrow \Ba$
by setting on objectsas $H^\ast(c,i) \;=\; H_c^\ast(i)$ and on morphisms as follows. Morphisms in $\CC\times \mathbb{I}_M$ are generated by those of the form $(\mathrm{id}_c,\sigma)$ with $\sigma$ a morphism of $\mathbb{I}_M$, and those of the form $(f,\mathrm{id}_i)$ with $f\colon c\to d$ in $\CC$.
\begin{itemize}
\item For $(\mathrm{id}_c,\sigma)$ we put $H^\ast(\mathrm{id}_c,\sigma):=H_c^\ast(\sigma)$, i.e.\ the corresponding arrow along the padded path $H_c^\ast$.
\item Let $f\colon c\to d$ in $\CC$. Since $H$ is a functor to $\Pa \Ba$, the arrow $H(f)$ is represented by a pair $(F_f,\alpha_f)$ with $F_f\colon \mathbb{I}_{m(c)}\to \mathbb{I}_{m(d)}$ and a natural transformation $\alpha_f\colon H_c\Rightarrow H_d\circ F_f$.
Using the inclusions $\iota_c,\iota_d$, define the \emph{extended reparametrization} $\overline F_f:I_M\to I_M$ by the relation
\[
\iota_d\circ F_f \;=\; \overline F_f\circ \iota_c .
\]
Then post-compose $\alpha_f$ with $\iota_c$ to obtain a natural transformation
\[
\alpha_f^\ast \;:=\; \alpha_f\ast \iota_c \;:\; H_c^\ast \Longrightarrow H_d^\ast \circ \overline F_f .
\]
We define $H^\ast(f,\mathrm{id}_i)$ to be the component $(\alpha_f^\ast)_i : H_c^\ast(i)\to H_d^\ast(\overline F_f(i))$.
\end{itemize}
These prescriptions respect the relations in $\CC\times \mathbb{I}_M$ and are compatible with compositions (the latter follows from functoriality of $H$ in $\Pa \Ba$ together with the defining relations in the localized path category). Hence $H^\ast$ is a well-defined functor. Moreover, by construction
\[
H^\ast(-,0) \;=\; P\circ F \;:\; \CC \longrightarrow \Ba .
\]

Since $P$ is a strong fibration (for finite domains), there exists a lift
\[
\widetilde H^\ast \;\colon \; \CC\times \mathbb{I}_M \longrightarrow \E
\]
such that $P\circ \widetilde H^\ast = H^\ast$ and $\widetilde H^\ast(-,0)=F$. Define
\[
\widetilde H(c) \;:=\; \widetilde H^\ast(c,-) \;:\; \mathbb{I}_M \longrightarrow \E .
\]
This gives a functor $\widetilde H\colon \CC\to \Pa \E$ satisfying $\pi_0\circ \widetilde H = F$ and $\Pa P\circ \widetilde H = H$ \emph{in $PB$}: indeed, $\Pa P\circ \widetilde H(c)=\Pa\circ \widetilde H^\ast(c,-)=H^\ast(c,-)$, which is the padded version of $H(c)$ and hence equal to $H(c)$ in the localization $\Pa \Ba$. Finally, since $\mathbb{I}_M$ has constant length $M$, the objectwise lifts $\widetilde H(c)=\widetilde H^\ast(c,-)$ have length $M$; but in $\Pa \E$ each $\widetilde H(c)$ is isomorphic to a path of length $m(c)$, showing that the weak lift preserves length.

We prove the converse. Let $H \colon \CC \times \mathbb{I}_m \rightarrow \Ba$ be a strong homotopy and let $F \colon \CC \rightarrow \E $ be a functor such that $P \circ F=H(c,0)$. The strong homotopy $H \colon \CC \times \mathbb{I}_m \rightarrow \Ba$ induces a functor $H^* \colon \CC \rightarrow \Pa\Ba$ given by $H^*(c)= H(c,-) \colon \mathbb{I}_m \rightarrow \Ba$. Moreover, $\pi_0 \circ H^*= P \circ G$ since $\pi_0 \circ H^*(c)=H^*(c)(0)=H(c,0)$. The weak fibration condition guarantees that we can lift $H^*$ to a functor $\wtilde{H}^* \colon \CC \rightarrow \Pa \E$. Since $\CC$ is finite, there is an $M$ such that all paths are at most of length $M$. Additionally, we can extend the paths shorter than $M$ to length $M$ by the constant path at the end point. We define a lift for $H$ given by $\wtilde{H} \colon \CC  \times \mathbb{I}_M \rightarrow \E$ given as follows: $\wtilde{H}(c,i)=\wtilde{H}^*(c)(i)$ and for any morphism $f \colon c \rightarrow d$ we have that $\wtilde{H}(f,1_i)=\alpha_f(i)$ where $\alpha_f$ is the natural transformation appearing in $\wtilde{H}^*(f)=(F_f,\alpha_f)$. 
\end{proof}

\subsection{First properties of fibrations}
We present some fundamental properties of fibrations, such as behavior under pullbacks and composition, which will be crucial for later applications.

\begin{prop}\label{Pullback}
\begin{enumerate}
    \item The pullback of a (finite) (length-preserving) weak fibration is a (finite) (length-preserving) weak fibration. 
    \item The pullback of a (finite) strong fibration is a (finite) strong fibration.
\end{enumerate}
\end{prop}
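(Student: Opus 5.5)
We use the universal property of the pullback in $\textbf{cat}$, together with the functoriality of $\Pa$ and Lemma~\ref{Conmutativo_caminos}. Let $P\colon \E\to\Ba$ be a fibration of the relevant type and $G\colon \Dc\to\Ba$ a functor. Form the strict pullback $\E'=\Dc\times_\Ba\E$: its objects are pairs $(d,e)$ with $G(d)=P(e)$, and its morphisms are pairs of morphisms with the same image in $\Ba$. Let $P'\colon\E'\to\Dc$ and $G'\colon\E'\to\E$ be the projections.

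\textbf{Strong case (2).} Take $F\colon\CC\to\E'$ and a strong homotopy $H\colon\CC\times\mathbb{I}_m\to\Dc$ with $H\circ i_0=P'\circ F$. Then $G\circ H$ is a strong homotopy in $\Ba$ with $G\circ H\circ i_0=P\circ G'\circ F$. Since $P$ is a (finite) strong fibration, there is a lift $K\colon\CC\times\mathbb{I}_m\to\E$ with $P\circ K=G\circ H$ and $K\circ i_0=G'\circ F$. By the universal property, $\wtilde H=(H,K)\colon\CC\times\mathbb{I}_m\to\E'$ satisfies $P'\circ\wtilde H=H$ and $\wtilde H\circ i_0=F$, because both components agree with those of $F$. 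Finiteness of $\CC$ plays no role beyond being passed along to the hypothesis on $P$.

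\textbf{Weak case (1).} Take $F\colon\CC\to\E'$ and $H\colon\CC\to\Pa\Dc$ with $\pi_0\circ H=P'\circ F$. Compose to get $\Pa G\circ H\colon\CC\to\Pa\Ba$. By Lemma~\ref{Conmutativo_caminos}, $\pi_0\circ\Pa G\circ H=G\circ P'\circ F=P\circ G'\circ F$. The weak fibration property then gives $K\colon\CC\to\Pa\E$ with $\Pa P\circ K=\Pa G\circ H$ and $\pi_0\circ K=G'\circ F$. The remaining task is to assemble $H$ and $K$ into a single functor $\wtilde H\colon\CC\to\Pa\E'$. For this we want the canonical comparison functor
\[
\Phi\colon \Pa\E'\longrightarrow \Pa\Dc\times_{\Pa\Ba}\Pa\E ,
\]
induced by $(\Pa P',\Pa G')$, to be an isomorphism, or at least to admit a section compatible with $\Pa P'$, $\Pa G'$ and $\pi_0$. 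Granting this, we set $\wtilde H=\Phi^{-1}\circ(H,K)$. The identities $\Pa P'\circ\wtilde H=H$ and $\pi_0\circ\wtilde H=F$ then follow from Lemma~\ref{Conmutativo_caminos} applied to $P'$ and $G'$ and from the universal property of $\E'$. For the length-preserving variant, we would check that when $K(c)$ is isomorphic to a path of length $m(c)$, the combined path can be chosen of that same length.

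\textbf{Main obstacle.} The hard step is the comparison $\Phi$, because localization does not commute with pullbacks in general. At the level of the unlocalized $\mathcal{P}$ it is fine: a pair of paths $I\colon\mathbb{I}_m\to\Dc$ and $J\colon\mathbb{I}_n\to\E$ with $G\circ I=P\circ J$ forces $m=n$, and the pair is then exactly a path in $\E'$. Morphisms $(F,\alpha)$ likewise pair up when the reparametrizations agree. After localizing, however, $H(c)$ and $K(c)$ need only be \emph{isomorphic} over $\Pa\Ba$, possibly through different lengths or reparametrizations. My first attempt would be to use the mechanism already used in the proof of Theorem~\ref{under_length_preserving_strong_weak_equivalence}: represent each $H(c)$ and $K(c)$ objectwise and pad them to a common length by repeating endpoints, which are morphisms in $W$. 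Then I would show that morphisms in the localization can be represented by a common reparametrization followed by a natural transformation, so that components can be paired. If a global iso is out of reach, it suffices to construct the section $(H,K)\mapsto$ pairs of padded representatives functorially on the image of $\CC$, exactly as in that theorem.
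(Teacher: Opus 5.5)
Your strong case is exactly the paper's argument for the non-finite case. Your observation that it applies verbatim to finite $\CC$ is cleaner than the paper's detour through part (1) and Theorem~\ref{under_length_preserving_strong_weak_equivalence}(2). In the weak case you follow the paper's route: push $H$ forward along $\Pa G$, lift, and pair the components. However, you explicitly grant the only step with real content, so as it stands the weak case is not proved. The paper performs this step in one line, $\wH(c)=(H(c),\wH'(c))$, justified only by the remark that a path in $\E'$ is a suitable pair of paths, and it says nothing about morphisms. You have correctly located what that line takes for granted, but neither text supplies it.

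Concretely, the problem is not on objects. The identities $\mathcal{P}\E'=\mathcal{P}\Dc\times_{\mathcal{P}\Ba}\mathcal{P}\E$ and $W_{\E'}=W_{\Dc}\times_{W_{\Ba}}W_{\E}$ hold on the nose, and localization does not change objects, so $\Phi$ is bijective on objects. Moreover, forming $(H,K)$ into the strict fibre product requires the lifting square to commute strictly. Then $P\circ K(c)=G\circ H(c)$ as functors on the same $\mathbb{I}_m$, so $(H(c),K(c))$ is already a path in $\E'$ of the length of $H(c)$. Length preservation is then automatic, and your worry about differing lengths does not arise in your own setup.

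What is missing is the morphism part. For each $f$ you need a morphism of $\Pa\E'$ lying over both $H(f)$ and $K(f)$. These are classes of zig-zags whose images in $\Pa\Ba$ agree only as classes, and your choices must respect composition. In other words, $\Phi$ must be full and faithful, or at least admit a functorial section on the image of $(H,K)$. Your suggested remedy rests on a normal form for morphisms of $\mathcal{P}\CC[W^{-1}]$ (a single fraction with a common reparametrization), and that normal form is not established anywhere. Even with a calculus of fractions for $W$, you would still have to show two things: that representatives of $H(f)$ and $K(f)$ can be chosen with the same $W$-leg and the same reparametrization, and that the resulting morphism of $\Pa\E'$ is independent of these choices. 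Objectwise padding, as in the proof of Theorem~\ref{under_length_preserving_strong_weak_equivalence}, acts only on objects and does not address this. Note that already for $\Ba=\bullet$ the required statement is essentially Lemma~\ref{Product}, which the paper also only states. (Here $\Pa\bullet$ is indiscrete, since every morphism of $\mathcal{P}\bullet$ lies in $W$ and the length-$0$ path is terminal.)
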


\begin{proof}
(1). We prove that the pullback of a length-preserving weak fibration is a length-preserving weak fibration and the other proofs are analogous. Consider the following pullback of a weak fibration $P \colon E \rightarrow \Ba$:
$$
% https://tikzcd.yichuanshen.de/#N4Igdg9gJgpgziAXAbVABwnAlgFyxMJZABgBpiBdUkANwEMAbAVxiRAB12BRAchAF9S6TLnyEUZAIxVajFm04AhOn0HDseAkUnkZ9Zq0QduAoSAwax20tOr75RpXQEyYUAObwioAGYAnCABbJAAmahwIJABmOzlDEAAFU18A4MQdEAjo2IM2AHFkkH8gpDJMyPSch0TVM2K0sqzEMNlcozzVCn4gA
\begin{tikzcd}
\E' \arrow[d, "P'"] \arrow[r, "G'"] & \E \arrow[d, "P"] \\
\Ba' \arrow[r, "G"]                 & \Ba              
\end{tikzcd}
$$
We know that the category $\E'$ is the subcategory of $\Ba' \times \E$ given by objects $(b,e) \in \Ba' \times \E$ such that $G(b)=P(e)$. Moreover the universal property of the pullback allows us to view a functor $M \colon \CC\rightarrow \E'$ as given by $M(c)=(M_1(c),M_2(c))$ such that $G \circ M_1(C)= P \circ M_2(C)$. In particular, we can use this to understand that a path in $\E'$ is a suitable pair of paths in $\Ba'$ and $\E$.

Now suppose that we have a weak homotopy $H \colon \CC \rightarrow \mathrm{P}B'$ this give rises to a weak homotopy $H' \colon \CC \rightarrow \mathrm{P} B$ given by $H'= \mathrm{P}G \circ H$. This means that we have the following diagram: 
$$
% https://tikzcd.yichuanshen.de/#N4Igdg9gJgpgziAXAbVABwnAlgFyxMJZABgBoBGAXVJADcBDAGwFcYkQAdDgYW5AF9S6TLnyEU5CtTpNW7LgAV6AAi4AhegHIBQkBmx4CRScWkMWbRJw5LVHAKLbBwg2KIAmKTXNyriler0Oi6iRiiept6ylta2XPYC0jBQAObwRKAAZgBOEAC2SGQgOBBIkjIW7AASwSA5+UiexaWI5T4x-soKTrr1Ba00JUgAzFGVfjYqAOK1fY2DLQAsY76x0z1Zuf3LzSMrHZNds1uFC3sgAEYwYFBIALTDRe3V2jSM9FeMCiKG4iDZWBSAAscIl+EA
\begin{tikzcd}
                                                 & \Pa \E' \arrow[d, "\Pa P'"] \arrow[r, "\Pa G'"] & \Pa \E \arrow[d, "\Pa P"] \\
\CC \arrow[r, "H"] \arrow[rr, "H'"', bend right] & \Pa \Ba' \arrow[r, "\Pa G"]                     & \Pa \Ba                  
\end{tikzcd}
$$

Moreover if we have a functor $F \colon \CC \rightarrow E'$ we also have a functor $F'=G' \circ \hat{F}$. With this we obtain the following diagram that give us a complete picture of the situation:
$$
% https://tikzcd.yichuanshen.de/#N4Igdg9gJgpgziAXAbVABwnAlgFyxMJZABgBoAmAXVJADcBDAGwFcYkQAdDgYW5AF9S6TLnyEUARgrU6TVuy4AFegAIuAIXoByAUJAZseAkQDM0mgxZtEnDsrUdNu4YbFFypEzMvybG7c76IkbiyAAsnt5y1rZOgi6ixpKkxFFWCnaqXACiOvFBrknIZqkW0Rn2OYEGiaEeEmm+trnVwW4oEQ1l6X4c2QIyMFAA5vBEoABmAE4QALZIZCA4EEhSsj0gABIgNIz0AEYwjIptSSBYYNiwgdNzqzTLSB4gjBcxcBCvUDsvB0cnhXEICmWGGAAscDcZvNEGtHogTPlbjDnvCwkjoUgzEsVogIusmgBxKF3RAAVgeuIA7BjSQA2SlIAActJhFJx9wJMSUqkUeT0yKQDI5iHIrOZjLx3SaihJMKpkuxe0Ox1OQIuVzY0pifJ+cDBWAmkMQxHF5Mlwp83MyKkJ-MmmMQCpFTO17DtP2V-zV7A1WGuZsW8OdVvYADFPX9VYD2IwYEaBvwgA
\begin{tikzcd}
                                                & \Pa \E' \arrow[rd] \arrow[dd, "\Pa P'"] \arrow[rr, "\Pa G'"] &                                                               & \Pa \E \arrow[rd] \arrow[dd] &                    \\
                                                &                                                              & \E' \arrow[dd, "P'" description] \arrow[rr, "G'" description] &                              & \E \arrow[dd, "P"] \\
\CC \arrow[r, "H" description] \arrow[rru, "F"] & \Pa \Ba' \arrow[rr] \arrow[rd]                               &                                                               & \Pa \Ba \arrow[rd]           &                    \\
                                                &                                                              & \Ba' \arrow[rr, "G"]                                          &                              & \Ba               
\end{tikzcd}$$
where some morphism are not explicitly labeled for readability.

Therefore we can lift the weak homotopy $H'$ to $\widetilde{H}'$ but this means that we can define a suitable lift $\wH$ of the original $H$ as:
$$\wH(c)=(H(c),\wH'(c)).$$

(2) in the finiteness setting follows from (1) and Theorem \ref{under_length_preserving_strong_weak_equivalence} (2).  It remais to prove it in the non-finite context. Take the pullback $P'\colon \E' \rightarrow \Ba'$ of a strong fibration $P \colon \E \rightarrow \Ba$ as in the previous case. If we have the following commutative diagram:
$$
% https://tikzcd.yichuanshen.de/#N4Igdg9gJgpgziAXAbVABwnAlgFyxMJZABgBpiBdUkANwEMAbAVxiRAB12BhLkAX1LpMufIRRkAjFVqMWbTjwAEnPAFt4y9qro4AFgCN9wAJJ8A+qv6CQGbHgJEJpKdXrNWiDuwBCdAORWQnaijuTSbnKenACiAQJBIg4oAEzO4bIeXr6BNsL2YsiplK4Z8uzR-NIwUADm8ESgAGYAThCWiADM1DgQSKky7mwACnHWLW1ITiA9fSWDngASOePtZNO9iBLxICtIazOdc5EgAGLLre39BwAsR5kA4ucTiACs3Ru3A8dDT+1d60g3l8HnEKHwgA
\begin{tikzcd}
\CC \arrow[d] \arrow[r, "F"]           & \E' \arrow[d, "P'"] \arrow[r, "G'"] & \E \arrow[d, "P"] \\
\CC \times \mathbb{I}_m \arrow[r, "H"] & \Ba' \arrow[r, "G"]                 & \Ba              
\end{tikzcd}
$$
we can define a homotopy $H'=G \circ H$ and we can take the lift of $H'$ and $G'\circ F$ that we denoted by $\wH' \colon \CC \times \mathbb{I}_m \rightarrow \E$. Now we define the homotopy lift of the original homotopy $H$ as $\wH \colon \CC \times \mathbb{I}_m \rightarrow \E'$ as $\wH(c,t)=(H(c,t),\wH(c,t)$.
\end{proof}

\begin{prop}\label{prop:trivial_fibration}
Let $\bullet$ be the category with only one object and one morphism, namely the identity. The functor $P \colon \Ba \rightarrow \bullet$  is a lenght-preserving weak (and therefore strong) fibration.
\end{prop}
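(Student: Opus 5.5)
The plan is to construct the lift explicitly: since $\bullet$ carries no information, the lift is determined by the functor $F$. Suppose we are given a weak homotopy $H \colon \CC \rightarrow \Pa \bullet$ and a functor $F \colon \CC \rightarrow \Ba$ with $P \circ F = \pi_0 \circ H$; the latter condition is automatic. Let $c_F$ be the composite of $F$ with the ``constant path'' functor $\Ba \rightarrow \mathcal{P}\Ba \rightarrow \Pa \Ba$, which sends an object $b$ to the path $\mathbb{I}_0 \rightarrow \Ba$ at $b$ and a morphism $g$ to $(\id_{\mathbb{I}_0}, g)$. The first candidate is $\wtilde H = c_F$. It satisfies $\pi_0 \circ \wtilde H = F$ on the nose. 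The equality $\Pa P \circ \wtilde H = H$, however, fails in general, because $H(c)$ may be a constant path in $\bullet$ of length $m(c) > 0$.

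To fix this, I will lift length by length. For each object $c$, $H(c)$ is a path $\mathbb{I}_{m(c)} \rightarrow \bullet$, and there is only one such path for each length. Set $\wtilde H(c)$ to be the constant path $\mathbb{I}_{m(c)} \rightarrow \Ba$ at $F(c)$, with all arrows sent to identities. This preserves length literally, and $\Pa P(\wtilde H(c)) = H(c)$.

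For a morphism $f \colon c \rightarrow d$, $H(f)$ is a morphism in the localization $\mathcal{P}\bullet[W^{-1}]$. It is represented by a zig-zag $(G_1, \alpha_1), (G_2,1)^{-1}, \dots$ whose natural transformations are necessarily identities, since $\bullet$ has only identities. Thus $H(f)$ is essentially a zig-zag of reparametrizations $G_k$ between zig-zag categories. I lift each forward arrow $(G_k, 1)$ between constant paths of lengths $p,q$ at some object to the pair $(G_k, \beta)$ between constant paths in $\Ba$. Here $\beta$ is an identity, except at one chosen step where $\beta$ is the constant natural transformation with all components $F(f)$. This is natural because every arrow in a constant path is an identity. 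I lift each inverted arrow $(G_k,1)^{-1} \in W$ to the corresponding inverted $(G_k, 1)$, which lies in $W_\Ba$. Equivalently, I can observe that in $\Pa\Ba$ every constant path of length $m$ at $b$ is isomorphic, via a morphism in $W$, to the length-$0$ constant path at $b$. Then I define $\wtilde H(f)$ as the conjugate of $(\id_{\mathbb{I}_0}, F(f))$ by these isomorphisms. This second description is cleaner and makes functoriality immediate from functoriality of $F$. Under it, $\Pa P \circ \wtilde H = H$ reduces to the claim that the images of the conjugating isomorphisms reproduce $H(f)$.

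The main obstacle is exactly this last claim: one must show that $\Pa\bullet$ is \emph{thin} on constant paths, i.e.\ any two morphisms with the same endpoints in $\Pa\bullet$ coincide, so that $H(f)$ equals the composite of the chosen isomorphisms. I would try to prove that every morphism $(G,1)$ of $\mathcal{P}\bullet$ lies in $W$, since the only possible natural transformation is the identity. The localization then inverts every morphism, and any two parallel morphisms are identified because both factor through the isomorphism to the length-$0$ path. This uses that endpoint-preserving maps $\mathbb{I}_m \rightarrow \mathbb{I}_0$ are unique and that $W$-morphisms into $\mathbb{I}_0$ become isomorphisms. Once this is established, the lift is length-preserving by construction. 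The strong fibration claim then follows from Theorem~\ref{under_length_preserving_strong_weak_equivalence}~(1).
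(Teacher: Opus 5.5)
Your proposal is correct and takes the same route as the paper: lift $H(c)$ to the constant path at $F(c)$ of the same length, and induce $\wtilde H(f)$ from $F(f)$. Your definition of $\wtilde H(f)$ by conjugating $(\id_{\mathbb{I}_0},F(f))$ with the $W$-isomorphisms to the length-$0$ constant paths adds precision the paper leaves implicit. So does your thinness argument for $\Pa\bullet$, which holds because the length-$0$ path is terminal in $\mathcal{P}\bullet$; this is exactly what is needed to check $\Pa P\circ\wtilde H=H$ on morphisms.
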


\begin{proof}
Let $P \colon \Ba \rightarrow \bullet$ be a functor. Take $H \colon \CC \rightarrow \Pa \bullet$ and $F \colon \CC \rightarrow \Ba$. The condition that $F \circ P= \pi_0 \circ H$ is trivially satisfied by the fact that $\bullet$ is the terminal category. We need $\wH \colon \CC \rightarrow \Pa \Ba$ such that $\Pa P \circ \wH = H$. We can define this taking as $\wH(c)$ the constant path of the same length as $H(c)$ given by $F(c)$ and $\wH(f)$ as the morphism given by $F(f)$.   
\end{proof}

\begin{corollary}\label{Projection}
    The projections $p_1 \colon \CC \times \Dc \rightarrow \CC$ and $p_2 \colon \CC \times \Dc \rightarrow \Dc$ are weak and strong fibrations.
\end{corollary}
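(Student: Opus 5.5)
The plan is to obtain the projections as pullbacks of the trivial fibration, so that Proposition~\ref{Pullback} and Proposition~\ref{prop:trivial_fibration} do all the work. By symmetry it suffices to treat $p_1 \colon \CC \times \Dc \rightarrow \CC$; the case of $p_2$ follows by exchanging the roles of $\CC$ and $\Dc$.

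First, consider the unique functor $P \colon \Dc \rightarrow \bullet$ and the unique functor $G \colon \CC \rightarrow \bullet$. I will check that the square
\[
\begin{tikzcd}
\CC \times \Dc \arrow[r, "p_2"] \arrow[d, "p_1"'] & \Dc \arrow[d, "P"] \\
\CC \arrow[r, "G"'] & \bullet
\end{tikzcd}
\]
is a pullback in the category of small categories. Following the description used in the proof of Proposition~\ref{Pullback}, the pullback is the subcategory of $\CC \times \Dc$ on pairs $(c,d)$ with $G(c)=P(d)$, and morphisms $(f,g)$ with $G(f)=P(g)$. Since $\bullet$ has a single object and a single morphism, these conditions are vacuous. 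Hence the pullback is all of $\CC \times \Dc$, and the induced functor to $\CC$ is exactly $p_1$. Equivalently, the universal property of the product is the universal property of the pullback over the terminal category.

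Next, by Proposition~\ref{prop:trivial_fibration}, $P \colon \Dc \rightarrow \bullet$ is a length-preserving weak fibration, and therefore also a strong fibration. Proposition~\ref{Pullback}(1) then shows that $p_1$ is a (length-preserving) weak fibration. Proposition~\ref{Pullback}(2) shows that $p_1$ is a strong fibration. Alternatively, the strong case follows from the length-preserving weak case via Theorem~\ref{under_length_preserving_strong_weak_equivalence}(1).

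There is no real obstacle. The only point needing care is identifying the product with the pullback over $\bullet$, and that is immediate from the explicit model of the pullback. If one prefers a direct argument, the lift can be written down explicitly. Given $H \colon \CC' \rightarrow \Pa\CC$ and $F=(F_1,F_2) \colon \CC' \rightarrow \CC\times\Dc$ with $F_1=\pi_0 \circ H$, take $\wH(c)=(H(c),\text{const}_{F_2(c)})$, using Lemma~\ref{Product} to regard this pair as an object of $\Pa(\CC\times\Dc)$. This direct lift is the same as the one the pullback argument produces.
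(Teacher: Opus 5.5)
Your proposal is correct and follows the paper's proof. Like the paper, you realize $p_1$ as the pullback of the trivial fibration $\Dc \to \bullet$ along $\CC \to \bullet$, and you conclude by combining Proposition~\ref{prop:trivial_fibration} with Proposition~\ref{Pullback}; your explicit check of the pullback and the optional direct lift are harmless extras that the paper omits.
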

\begin{proof}
    We prove the result for $p_1 \colon \CC \times \Dc \rightarrow \CC$ and the other is analogous. 
Consider the terminal category $\bullet$. By Proposition \ref{prop:trivial_fibration} the functor
$P:D \to \bullet$ is a weak and strong fibration. Now observe that the product
$C\times D$ together with the projection $p_{1}:C\times D \to C$
arises as the pullback of the diagram
\[
C \xrightarrow{\,  \,} \bullet \xleftarrow{\, P \,} D.
\]
Now the result follows from Proposition \ref{Pullback}.    
\end{proof}

\begin{prop}\label{Composition_Fibration}
    Let $P_1 \colon \E_2 \rightarrow \E_1$ and $P_2 \colon \E_1 \rightarrow \Ba$ be two weak fibrations. The composition $P_2\circ P_1$ is also a weak fibration.
\end{prop}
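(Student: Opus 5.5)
The plan is the standard two-stage lifting argument, using the functoriality of $\Pa$ and the identity $\pi_0\circ \Pa G = G\circ \pi_0$ from Lemma~\ref{Conmutativo_caminos}.

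Suppose we are given a small category $\CC$, a weak homotopy $H\colon \CC\to \Pa\Ba$, and a functor $F\colon \CC\to \E_2$ with $(P_2\circ P_1)\circ F=\pi_0\circ H$. We want a functor $\wH\colon \CC\to\Pa\E_2$ satisfying
\[
\Pa(P_2\circ P_1)\circ\wH=H \qquad\text{and}\qquad \pi_0\circ\wH=F .
\]

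\emph{First lift, through $P_2$.} Set $F_1:=P_1\circ F\colon \CC\to\E_1$. Then $P_2\circ F_1=\pi_0\circ H$. Since $P_2$ is a weak fibration, this gives a lift $\wH_1\colon \CC\to\Pa\E_1$ with
\[
\Pa P_2\circ\wH_1=H \qquad\text{and}\qquad \pi_0\circ\wH_1=F_1 .
\]

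\emph{Second lift, through $P_1$.} We regard $\wH_1$ as a weak homotopy in $\E_1$. The compatibility condition needed to lift it through $P_1$ is $P_1\circ F=\pi_0\circ\wH_1$, and this is exactly the identity $\pi_0\circ\wH_1=F_1$ obtained above. Since $P_1$ is a weak fibration, we get $\wH\colon\CC\to\Pa\E_2$ with
\[
\Pa P_1\circ\wH=\wH_1 \qquad\text{and}\qquad \pi_0\circ\wH=F .
\]

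\emph{Conclusion.} It remains to check that $\Pa(P_2\circ P_1)=\Pa P_2\circ\Pa P_1$. This is the functoriality of $\mathrm{P}$ from the earlier proposition. At the level of the category $\mathcal P$ it holds because horizontal composition satisfies $1_{P_2}(1_{P_1}\alpha)=1_{P_2P_1}\alpha$. It then descends to the localizations by their universal property, since both functors send $W$ into $W$. Combining the two lifts,
\[
\Pa(P_2\circ P_1)\circ\wH=\Pa P_2\circ\Pa P_1\circ\wH=\Pa P_2\circ\wH_1=H,
\]
which is the required lifting property.

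\emph{Main obstacle.} The only step needing care is the equality $\Pa(P_2P_1)=\Pa P_2\circ\Pa P_1$ as functors on the localized categories, rather than merely up to isomorphism. I would argue it through the uniqueness clause of the universal property of $\mathcal{P}\CC[W^{-1}]$: two functors out of the localization that agree after precomposing with the localization functor must be equal.

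\emph{Variants.} The same argument, word for word, shows that compositions of finite weak fibrations are finite weak fibrations, since $\CC$ is the same at both stages. It also shows that compositions of length-preserving weak fibrations are length-preserving. Indeed, $\wH_1(c)$ is isomorphic to a path of the same length as $H(c)$, and $\wH(c)$ can be chosen isomorphic to a path of the same length as $\wH_1(c)$. Here one must check that the length condition is invariant under isomorphism in $\Pa\E_1$, and this holds directly from how the condition is phrased in the definition.
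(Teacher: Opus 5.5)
Your proof is correct and follows the paper's argument: lift first through the lower fibration $P_2$, using $P_1\circ F$ as initial data, to get a weak homotopy into $\Pa\E_1$; then lift that through $P_1$; then conclude with $\Pa(P_2\circ P_1)=\Pa P_2\circ \Pa P_1$. Your write-up is cleaner than the paper's, which swaps the roles of $P_1$ and $P_2$ relative to the statement and simply cites functoriality of $\Pa$, whereas you justify the strict equality by the uniqueness in the universal property of the localization.
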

\begin{proof}
   Suppose that we have a suitable pair $F \colon \CC \rightarrow \E_2$ and $H \colon \CC \rightarrow \Pa \Ba$. This defines a diagram:
   $$
   % https://tikzcd.yichuanshen.de/#N4Igdg9gJgpgziAXAbVABwnAlgFyxMJZABgBoAmAXVJADcBDAGwFcYkQAdDgYW5AF9S6TLnyEUARgrU6TVuy4BbejgAWAJ0XAACvwAEXAEL0BQkBmx4CRKRJkMWbRJw7K1mnfy4BRU8MtiROSkdjQO8s4+fuYiVuLIwVRhck4uxtEWotaSpMT2KQquKhpauj4A+uQZsYEowXnJjoXelQIyMFAA5vBEoABm6hCKSGQgOBBIUrJNzgAS0QNDkzTjSAAsgv2Dw4jBYxOIU+GpSsUeunra5RIL20gAzCsHa40RIFc3myCLO3uriI8QIx6AAjGCMbQ1bIgdRYTqqHC3JaIACsTyQe2OhTcJU8lyqXx+SAAbOiAa9UlcCWYiaiycTCXdEKN-qSQGCwFAHqMsc4AGJtfhAA
\begin{tikzcd}
                                                & \mathrm{P}\E_2 \arrow[d, "\mathrm{P} P2"] \arrow[r] & \E_2 \arrow[d, "P_2"] \\
                                                & \mathrm{P}\E \arrow[d, "\mathrm{P} P_1"] \arrow[r]  & \E \arrow[d, "P_1"]   \\
\CC \arrow[r, "H"] \arrow[rruu, "F", bend left] & \mathrm{P} \Ba \arrow[r]                            & \Ba                  
\end{tikzcd}
   $$
   since $\Pa$ is functorial. Using the fact that $P_1$ is a fibration there is a lift $\wH \colon \CC \rightarrow \Pa \E_1$ for $H$ and $P_2 \circ F$. Now using that $P_2$ is also fibration we obtain a lift $\wH^* \colon \CC \rightarrow \Pa \E_2$ for $\wH$ and $F$. This also a lift for $H$ and $F$ since $\wH^* \circ \Pa (P_1 \circ P_2)= \wH^* \circ \Pa (P_1) \circ \Pa (P_2)= \wH \circ \Pa(P_2)= H$ and $\pi_0 \circ \wH^*=F$.
\end{proof}

\section{The fibrant replacement of a functor and Svarc genus}\label{sec:fibrant_replacement_svarc}

\subsection{The fibrant replacement of a functor}
A central technique in homotopical settings is to replace a given object by a fibrant one. Here we adapt this idea to functors between small categories, constructing a factorization that provides a weak homotopy equivalence followed by a fibration.

\begin{teo}[Fibrant replacement for a functor]
\label{functor_factorisation_through_fibration}
    Let $F\co \CC \rightarrow \Dc$  be a functor. Then $F=F_2\circ F_1\co \CC \to E_F \to \Dc$ where $F_1\co \CC\rightarrow \mathrm{E}_F$ is a weak homotopy equivalence and $F_2\co \mathrm{E}_F \rightarrow \Dc$ is a length-preserving weak fibration (therefore a strong fibration).
\end{teo}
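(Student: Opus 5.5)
We mimic the classical mapping path space construction. Define $\Ef$ as the pullback
\[
\Ef \;=\; \CC \times_{\Dc} \Pa\Dc,
\]
of $F\colon \CC\to\Dc$ along the initial object functor $\pi_0\colon \Pa\Dc\to\Dc$. Concretely, objects of $\Ef$ are pairs $(c,I)$ with $I$ a path in $\Dc$ and $I(0)=F(c)$, and morphisms are pairs $(f,\phi)$ with $\pi_0(\phi)=F(f)$. Set $F_1(c)=(c,\mathrm{const}_{F(c)})$, where $\mathrm{const}_{F(c)}$ is the path of length $0$ at $F(c)$, and $F_1(f)=(f,(\id,F(f)))$. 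Set $F_2=\pi_1\circ q$, with $q\colon\Ef\to\Pa\Dc$ the projection, so $F_2(c,I)=I(m)$. Then $F_2\circ F_1(c)=F(c)$ and likewise on morphisms, so $F=F_2\circ F_1$.

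\textbf{$F_1$ is a weak homotopy equivalence.} Let $p\colon\Ef\to\CC$ be the other projection. Then $p\circ F_1=\id_\CC$ on the nose. For $F_1\circ p\simeq_w \id_{\Ef}$ we need a functor $K\colon \Ef\to\Pa\Ef$ with $\pi_0K=F_1p$ and $\pi_1K=\id$. On an object $(c,I)$ with $I$ of length $m$, take the path in $\Ef$ given by $k\mapsto (c,I|_{[0,k]})$, the truncations of $I$. Consecutive truncations are related by the length‑shortening maps $(F,\alpha)$ that collapse the last step, with $\alpha$ built from the arrow $I(k)\leftrightarrow I(k+1)$. These give the zig‑zag arrows, with the $\CC$‑component the identity of $c$. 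Morphisms of $\Ef$ induce morphisms of these "truncation paths" componentwise. Functoriality must be checked after passing to the localization, using that reparametrizations in $W$ become invertible.

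\textbf{$F_2$ is a length‑preserving weak fibration.} Given $G\colon\mathcal{X}\to\Ef$ and a weak homotopy $H\colon\mathcal{X}\to\Pa\Dc$ with $\pi_0H=F_2G$, write $G(x)=(c_x,I_x)$. Then $I_x(\mathrm{end})=H(x)(0)$. Define $\wH(x)$ as the path of length $\mathrm{len}(H(x))$ in $\Ef$ whose $k$‑th object is $(c_x, H(x)|_{[0,k]}\circ I_x)$, the concatenation of $I_x$ with the truncation of $H(x)$. Its arrows come from the same truncation morphisms as above, again with trivial $\CC$‑component. Then:
\begin{itemize}
\item $\pi_0\wH=G$;
\item $\Pa F_2\circ\wH=H$, since the endpoints of these concatenations trace out $H(x)$;
\item lengths are preserved by construction.
\end{itemize}
Theorem~\ref{under_length_preserving_strong_weak_equivalence}(1) then makes $F_2$ a strong fibration.

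\textbf{Main obstacle.} The delicate step is making the "truncation" construction genuinely functorial into $\Pa\Ef$. One has to say precisely how a morphism $(F,\alpha)$ of paths in $\Pa\Dc$, or a zig‑zag of such in the localization, induces compatible morphisms between all truncations and concatenations, compatibly with composition. I would first handle representatives in $\mathcal P\Dc$: restrict $F$ to initial segments, sending $[0,k]$ into $[0,F(k)]$, and use the even‑length normalization so that truncations at even indices are again paths. I would then verify that elements of $W$ go to elements of $W$, so that everything descends to the localization by its universal property.
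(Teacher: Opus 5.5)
Your proposal follows the paper's proof essentially step by step. It uses the same pullback $\Ef$ of $F$ along $\pi_0$, the same constant-path inclusion $F_1$ and endpoint functor $F_2$, and the projection $\Ef\to\CC$ as weak inverse, with a ``path of truncations'' giving $F_1\circ p\simeq_w\id_{\Ef}$; the lift is obtained by concatenating $I_x$ with successive truncations of $H(x)$, and Theorem~\ref{under_length_preserving_strong_weak_equivalence}(1) then gives the strong fibration. The only difference is that the paper pads every truncation to the common length $m$, so consecutive levels are joined by length-preserving morphisms rather than collapse maps and inverted paddings.

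The paper also only asserts functoriality, so your plan for that step is not behind it, but your plan as stated does not work without further ideas. Endpoint-preserving functors $\mathbb{I}_m\to\mathbb{I}_n$ need not be monotone: for example $(0,1,0,1)\colon\mathbb{I}_3\to\mathbb{I}_1$ sends $[0,2]$ outside $[0,F(2)]=\{0\}$, so restricting to initial segments fails. Moreover, the universal property of the localization applies to $\Pa\Dc$, not directly to the pullback $\Ef$ or to functors defined on $\Ef$.
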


\begin{proof}
    We define the category $\mathrm{E}_F$ as the pullback:
    $$
    % https://tikzcd.yichuanshen.de/#N4Igdg9gJgpgziAXAbVABwnAlgFyxMJZABgBoBGAXVJADcBDAGwFcYkQAdDgYW5AF9S6TLnyEU5CtTpNW7LgBEAxgKEgM2PASKTi0hizaJOHALb0cACwBOp4AAV+AAkUrBwzWKJk9NA3OMucytbYABRfgB9ADEBaRgoAHN4IlAAM2sIUyQyEBwIJEkZQ3ZY9xAMrKQAJhp8wr9ZIxMsmET6SPJVdMzsxABmOoLEavLKvsG84eJ+Sn4gA
\begin{tikzcd}
\mathrm{E}_F \arrow[r] \arrow[d] & \mathrm{P} \Dc \arrow[d, "\pi_0"] \\
\CC \arrow[r, "F"]               & \Dc    \end{tikzcd}
    $$
    That is, $\mathrm{E}_F$ is the category whose objects are pairs $(c,I)$ where $c$ is an object in $\CC$ and $I$ is a path in $\Dc$ such that its initial object is $F(c)$. A morphism is given by an arrow $f \colon c \rightarrow c'$ in $\CC$ and a morphism of paths between $I$ and $I'$ such that the morphism connecting their initial objects is $F(f)$.

We now define the functors $F_1:\CC \rightarrow \mathrm{E}_F$ and $F_2: \mathrm{E}_F \rightarrow \Dc$. We define $F_2$ as the functor that sends an object $(c,I)$ to the final object of $I$ and a morphism to the corresponding connecting morphism between the final objects. The functor $F_1$ is defined by $F_1(c)=(c,F(c))$, where, by an abuse of notation, $F(c)$ denotes the constant path of length $0$ associated to the object $F(c)$; it takes a morphism $f \colon c \rightarrow c'$ to the pair consisting of the morphism $f$ and the morphism between the constant paths given by $F(f)$.

    In order to prove that $F_1$ is a weak homotopy equivalence, we must define a weak homotopy inverse. Let $G \colon \mathrm{E}F \rightarrow \CC$ be the functor given by the universal property of the pullback, i.e., it projects to the first component. It is easy to see that $G \circ F_1=\id{\CC}$, so they are also weakly homotopic. On the other hand, $F_1 \circ G$ is the functor that sends an object $(c,I)$—where $I\colon \mathbb{I}_m \rightarrow \CC$ is a path such that $I(0)=F(c)$—to $(c,F(c))$. To obtain a homotopy between $F_1 \circ G$ and the identity, we must assign, in a functorial way, a path from $(c,F(c))$ to $(c,I)$. This construction, however, is straightforward. Suppose that $I$ is a path of the form:
    $$
    % https://tikzcd.yichuanshen.de/#N4Igdg9gJgpgziAXAbVABwnAlgFyxMJZABgBpiBdUkANwEMAbAVxiRADEAKAYQEoQAvqXSZc+QigCM5KrUYs2ASQD6kwcJAZseAkQBMM6vWatEIADrmGUCDgRCR28UQDMhuSaXLgYALSSBdUcxXRQAFndjBTMVQgFZGCgAc3giUAAzACcIAFskMhAcCCRpD2iQLGViIJAs3KQDQuLEUoY6ACMYBgAFUR0JEEysJIALHBq6vMRGoqQXB1rsqYimuaN5UwrvPwCQajbOnr7nMyHR8fiBIA
\begin{tikzcd}
F(c) \arrow[r, "i_0"] & I_1 & \ldots \arrow[l] \arrow[r] & I_{m-1} & I_m \arrow[l, "i_{m-1}"']
\end{tikzcd}
$$
    we can define a path of paths between $I$ and the constant path $F(c)$ using the following diagram:
    $$
% https://tikzcd.yichuanshen.de/#N4Igdg9gJgpgziAXAbVABwnAlgFyxMJZABgBoAWAXVJADcBDAGwFcYkQAxACgGMBKEAF9S6TLnyEUARgrU6TVuwCSXKQOGjseAkQBMsmgxZtEIADpnGUCDgQaQGLRKIBmA-OPKuAWwC0aoREHMW1JZHJ3I0VTFW91IMdxHRQyKTkok05eeM0ksJk0wwVMlQD7RNC9UkKPaPNLa1tA3MqUNxqMrzKEkOcUCI7irpzgp2SSUl10odMLKxs7HrGwsmJpz1NufmbRvKIZNaKNrO3y3vH9Q9rMucbFlr7kNyvOzeydiseIl5mTkc-xjIpkc6rcFh9zmF9MDruwwU0zstXJN1qCGuDEXt+iiQTd0QilljwqQXKiSj5-P9IUQyKTcewtlSkdISWThhDmch9HTYbN8fddq0nqz6TEKWU5DAoABzeBEUAAMwAThBvEgyCAcBAkDJeSAsAB9Yg7ZWqpD6TXaxBSeymtWIC1apAuW0q+0RS3O0X6g3APw2k1unW6p3Wi2vEBSQNm61uT1h71RmiMegAIxgjAACtTTEqsNKABY4aP2qRx0NSD0RqOumNSACsNArVd+SZAKfTWZzIDzheLtftjfjADZvYbjQOkAB2JtW0fttMZ7PMnv5osgRMl6ezpAADk3k8QAE4d4h93q2x2l93e+vD7oQ1bdOHfuOtw7y0-dPfP+byIepEfJAh2rd8pEdOcxyNMDf0QKcAI9UNzwjN8AI1UMQNbMD51DYcAJneN4KCO0dXPJCAJPeMj3vdCnxbY4a2IoNrVo81dVA5NFy7Fdb37Ji6xwr98NPXQXX40syLoiiROIQ9EKfDUUN9MB-EEd9YN0ei6kYxVmIg80xN0mNWIddisIAkSXwYsCh1DSilIncSDNPKR4MoQQgA
\begin{tikzcd}
F(c) \arrow[r, "1"] \arrow[d, "1"]    & F(c) \arrow[d, "i_0"]      & \ldots \arrow[l, "1"'] \arrow[r, "1"] \arrow[d] & F(c) \arrow[d, "i_0"]      & F(c) \arrow[l, "1"'] \arrow[d, "i_0"]          \\
F(c) \arrow[r, "i_0"]                 & I(1)                       & \ldots \arrow[l, "1"'] \arrow[r, "1"]           & I(1)                       & I(1) \arrow[l, "1"']                           \\
\ldots \arrow[u, "1"] \arrow[d, "1"'] & \ldots \arrow[u] \arrow[d] & \ldots \arrow[u] \arrow[d]                      & \ldots \arrow[u] \arrow[d] & \ldots \arrow[u] \arrow[d]                     \\
F(c) \arrow[r, "i_0"]                 & I(1)                       & \ldots \arrow[l] \arrow[r] \arrow[u]            & I(m-1)                     & I(m-1) \arrow[l, "1"]                          \\
F(c) \arrow[r, "i_0"] \arrow[u, "1"]  & I(1) \arrow[u, "1"]        & \ldots \arrow[l] \arrow[r] \arrow[u]            & I(m-1) \arrow[u, "1"]      & I(m) \arrow[l, "i_{m-1}"] \arrow[u, "i_{n-1}"]
\end{tikzcd}
    $$
    Moreover, we can show that this construction is functorial. Consider a morphism $f \colon c \rightarrow c'$ and a map of paths $(L,\alpha) \colon I \rightarrow J$, where $I \colon \mathbb{I}_m \rightarrow \Dc$ and $J \colon \mathbb{I}_n \rightarrow \Dc$ with $I(0)=F(c)$ and $J(0)=F(c')$. This map can be represented by the following commutative diagram:
    $$
    \adjustbox{scale=0.80,center}{
   % https://tikzcd.yichuanshen.de/#N4Igdg9gJgpgziAXAbVABwnAlgFyxMJZARgBoAGAXVJADcBDAGwFcYkQAxACgGMBKEAF9S6TLnyEUAJgrU6TVuwCSXYgOGjseAkQDMsmgxZtEIADpnGUCDgQaQGLRKIAWA-OPKuAWwC0aoREHMW1JZABWdyNFUxVvdSDHcR0UclJiOWiTTl4AcgTNZLCyDMMFbIApLgAZVT4C4KcU5BlSjxjzS2tbQMLQvXTM8vYq2r81BqT+lDc2rJGan3qhORgoAHN4IlAAMwAnCG8kNJAcCCRie33DpBlT88QyEEZ6ACMYRgAFEOdTPax1gALHC9EDXI6IO5nJC6K4HCFue4wmgvd5fH4pED-IEguE3RAnaGISLtbIWJhoQH0AD65AAvDsQCi3h9vk1JFiAcDQeCLjQiQA2MqeUzkxiUmnEJnPFno9nsbHcvEQqEPADsyuRSMQAA5hR0xRLqcBxoIefCkIiiQBOfVkswUqnUo6a4n8h4C11q91IT1BXmIb3anWu23B5lotlFBVc3GUQRAA
\begin{tikzcd}
                & F(c) \arrow[r] \arrow[ld, "\alpha_0=f"'] & I(1) \arrow[ld, "\alpha_1"'] & \ldots \arrow[l] \arrow[r] \arrow[ld] & I(m-1) \arrow[ld, "\alpha_{m-1}"] & I(m) \arrow[l] \arrow[ld, "\alpha_m"] \\
F(c') \arrow[r] & J(L(1))                                  & \ldots \arrow[l] \arrow[r]   & J(L(m-1))                             & J(L(m)) \arrow[l]                 &                                      
\end{tikzcd}}
    $$
    This can be arranged to made a morphism between paths of paths using the following construction represented by the commutative diagram:
    $$
    \adjustbox{width=\textwidth}{
\begin{tikzcd}
                                      & F(c) \arrow[r] \arrow[ld] \arrow[dd, dashed]            & F(c) \arrow[ld] \arrow[dd, dashed]               & \ldots \arrow[l] \arrow[r] \arrow[ld] \arrow[dd, dashed] & F(c) \arrow[ld] \arrow[dd, dashed]         & F(c) \arrow[l] \arrow[ld] \arrow[dd, dashed]      \\
F(c') \arrow[r] \arrow[dd]            & F(d) \arrow[dd, bend right]                             & F(d) \arrow[l] \arrow[dd, bend right]            & \ldots \arrow[l] \arrow[r] \arrow[dd, bend right]        & F(d) \arrow[dd, bend right]                &                                                   \\
                                      & F(c) \arrow[ld] \arrow[r]                               & I(1) \arrow[ld]                                  & \ldots \arrow[ld] \arrow[l] \arrow[r]                    & I(1) \arrow[ld]                            & I(1) \arrow[ld] \arrow[l]                         \\
F(c') \arrow[r]                       & J(L(1))                                                 & \ldots \arrow[l] \arrow[r]                       & J(L(1))                                                  & J(L(1)) \arrow[l]                          &                                                   \\
                                      & \dots \arrow[uu] \arrow[ld]                             &                                                  &                                                          &                                            & \dots \arrow[uu] \arrow[ld]                       \\
\dots \arrow[uu]                      &                                                         &                                                  &                                                          & \dots                                      &                                                   \\
                                      & F(c) \arrow[ld] \arrow[r] \arrow[uu] \arrow[dd, dashed] & I(1) \arrow[ld] \arrow[dd, dashed]               & \dots \arrow[ld] \arrow[r] \arrow[l]                     & I(m-1) \arrow[ld] \arrow[dd, dashed]       & I(m-1) \arrow[ld] \arrow[l] \arrow[uu] \arrow[dd] \\
F(c') \arrow[uu] \arrow[dd] \arrow[r] & J(L(1)) \arrow[dd, bend right]                          & \dots \arrow[l] \arrow[r] \arrow[dd, bend right] & J(L(m-1)) \arrow[dd, bend right]                         & J(L(m-1)) \arrow[l] \arrow[dd, bend right] &                                                   \\
                                      & F(c) \arrow[r] \arrow[ld]                               & I(1) \arrow[r]                                   & \dots                                                    & I(m-1) \arrow[r] \arrow[l]                 & I(m) \arrow[ld]                                   \\
F(c') \arrow[r]                       & J(L(1))                                                 & \dots \arrow[l] \arrow[r]                        & J(L(m-1)) \arrow[r]                                      & J(L(m))                                    &                                                  
\end{tikzcd}}
    $$
    This is well-defined in the category of paths, as one can easily verify that it respects weak equivalences. Therefore, we conclude that $\Ef$ is weak homotopy equivalent to $\CC$.

We now prove that $F_2$ is a weak fibration. Suppose we have a homotopy $H \colon \mathcal{B} \rightarrow \Pa \D$ and a functor $G \colon \Ba \rightarrow \Ef$ such that $G \circ F_2 = H \circ \pi_1$. We define a lifting $\wH \colon \Ba \rightarrow \Ef$ as follows. For each object $b$, we must construct a sequence of objects $(c_i, I_i)$ in $\Ef$ such that for each $i$, we have $I_i(m_i) = H(b, i)$, where $m_i$ is the length of $I_i$, along with a suitable zig-zag of morphisms between these paths.
text
We begin the construction with the path $I_0$, which is the second component of $G(b) = (c_b, I_b)$:
    $$
% https://tikzcd.yichuanshen.de/#N4Igdg9gJgpgziAXAbVABwnAlgFyxMJZABgBpiBdUkANwEMAbAVxiRAEkB9AIwApiAlCAC+pdJlz5CKAIzkqtRizZc+MoaPHY8BIgCZ51es1aIQAHXNQIOBJpAZtUogGZDikyp68AtjwEAvAASvNzkGgowUADm8ESgAGYAThA+SGQgOBBIMvbJqUgGmdmIuWIg+WmIRVlILsIUwkA
\begin{tikzcd}
I_b(0) \arrow[r] & I_b(1) & \dots \arrow[l] \arrow[r] & {I_b(m_b)=H(b,0)}
\end{tikzcd}$$
Now we can define $I_1$ and the corresponding morphism as follows:
$$
\adjustbox{width=\textwidth}{% https://tikzcd.yichuanshen.de/#N4Igdg9gJgpgziAXAbVABwnAlgFyxMJZABgBpiBdUkANwEMAbAVxiRAEkB9AIwApiAlCAC+pdJlz5CKAIzkqtRizZc+MoaPHY8BIgCZ51es1aIQAHXNQIOBJpAZtUogGZDikyp68AtjwEAvAASvNzkGmIOEjrSJKQyCsbKZqr8EVqSurLxiUqmHN7qIpGOmbEGCUZ5bJbWtsUZMa45VZ4p3n7c6VFOWcgALO5J+SFhgg09ZUSDlR7JIKPhE6VNKACsLXMjofHdK87rQ9Vmi+PCCjBQAObwRKAAZgBOED5IZCA4EEgy9k8vSAYPl9ED9In9XohAZ8kC5fs8IYMgUg1nD-ogAGzUaGIFFg+FITFIxAAdlRELcRIAHGSkABOLHA0l4tGUhl0mmICnYpkPfEgtk4jnvbH9Dn0okyYgc1kS0G8tEyOQSqUUYRAA
\begin{tikzcd}
I_b(0) \arrow[r] \arrow[d] & I_b(1) \arrow[d] & \dots \arrow[l] \arrow[r] & {I_b(m_b)=H(b,0)} \arrow[r] \arrow[d] & {H(b,0)} \arrow[d] \arrow[r] & {H(b,0)} \arrow[d] \\
I_b(0) \arrow[r]           & I_b(1)           & \dots \arrow[l] \arrow[r] & I_b(m_b)                              & {H(b,0)} \arrow[l] \arrow[r] & {H(b,1)}          
\end{tikzcd}}
$$
where each vertical morphism is the identity except the last one that it is $H(1_b,i_1)$.
For the second step we have:
$$\adjustbox{width=\textwidth}{
% https://tikzcd.yichuanshen.de/#N4Igdg9gJgpgziAXAbVABwnAlgFyxMJZABgBpiBdUkANwEMAbAVxiRAEkB9AIwApiAlCAC+pdJlz5CKAIzkqtRizZc+MoaPHY8BIgCZ51es1aIQAHXNQIOBJpAZtUogGZDikyp68Atjw1iDhI60sgALO7GymYAErzcpOoigY6SuihkMgpRphzegslaaaFyWUZKuaq8SfapIfqJ2RVslta2hUFO6chuZR7ReXx+3AFF9SgRfTlscQk1KcHOKACsjeWesfGkeqOdxUSrlOsDs9saCjBQAObwRKAAZgBOED5IZCA4EEgy9k8vSAYPl9ED9An9Xog3ECAb9nhCoZ8kGFYf9EKtoYgAGwoiEAdmoiKxOKQAA4CcDccTEGSMQBOKkyd6E+lguHfOQY5Gs1HownEKmY8nfKk0wkuKm0oWILkPNkgpnAmQ-CjCIA
\begin{tikzcd}
I_b(0) \arrow[r]           & I_b(1)           & \dots \arrow[l] & I_b(m_b) \arrow[l] \arrow[r]           & {H(b,1)}           & {H(b,2)} \arrow[l]           \\
I_b(0) \arrow[r] \arrow[u] & I_b(1) \arrow[u] & \dots \arrow[l] & I_b(m_b) \arrow[l] \arrow[r] \arrow[u] & {H(b,1)} \arrow[u] & {H(b,2)} \arrow[l] \arrow[u]
\end{tikzcd}
}
$$
We can repeat this until we get all the elements in $H(b,-)$. One can also check that this construction is funtorial.
\end{proof}

In the previous result it is necessary that the first is a weak homotopy equivalence and not a strong equivalence as the following example shows.

\begin{example}
    Suppose we have a connected category $\mathcal{C}$. Then, a constant functor $T: \bullet \to \mathcal{C}$ can be factorized as $\bullet \to \mathcal{E}_T \to \mathcal{C}$, where $\mathcal{E}_T$ is the category of paths in $\mathcal{C}$ with initial object $T(\bullet)$. The first functor $T_1$ maps $\bullet$ to $(\bullet, T(\bullet))$, and the second functor $T_2: \mathcal{E}_T \to \mathcal{C}$ sends a path with initial object $T(\bullet)$ to its end object.

    Now, consider the case where $\mathcal{C} = \mathbb{I}$, the category defined in Example \ref{Zig-zag_infinity}, and let $0: \bullet \to \mathbb{I}$ be the constant functor associated with the object $0$ in $\mathbb{I}$. In this case, there is no strong homotopy equivalence between the category $\bullet$ and $\mathcal{E}_0$. Indeed, if such a homotopy equivalence existed, there would be a positive integer $n$ such that for every path $I$ in $\mathbb{I}$, there would be a path of length $n$ connecting $I$ and the constant path $\widehat{0}$. However, this is impossible since there exist paths of arbitrary length.
\end{example}

\subsection{\v{S}varc genus and sectional category}
We recall from \cite{MAC-MOSQ-FUNCT} and \cite{TANAKA} the notions of geometric cover, sectional category and of (strong) Svarc genus. Let $\CC$ be a small category. A family of subcategories $\{\Uc_0,\dots,\Uc_n\}$ of $\CC$ is a {\em geometric cover} of $\CC$ if for every chain of composable morphisms $(f_1,\dots,f_m)$ in $\CC$ there is some $0\leq i \leq n$ such that every $f_j \in \mathrm{Mor}(\Uc_i)$ for each $1\leq j\leq m$.

\begin{defi}
Let $P\co \CC \rightarrow \Dc$ be a functor, its {\em  sectional category} denoted by $\sC (P)$ is  the least non-negative integer $n$ such that there is a geometric cover $\{\Uc_0, \ldots, \Uc_n \}$ of $\CC$ by subcategories that admit local sections, that is, such that for every $i \in \{0, \ldots, n\} $ there is a functor $s_i \colon \Uc_i \rightarrow \Dc$ which is a right inverse of $F$,  in the sense that $P \circ s_i = \iota_i$ ($P \circ s_i \simeq \iota_i$), where $\iota_i \colon \Uc_i \rightarrow \CC$ is the inclusion.   
\end{defi}

We also have a weaker notion, the Strong Svarc genus introduced in \cite{Cohomologysvarcgenus}.

\begin{defi}
Let $P\co \CC \rightarrow \Dc$ be a functor, its {\em strong homotopic} (or {\em strong Svarc genus}), denoted by $\sG (P)$, is  the least non-negative integer $n$ such that there is a geometric cover $\{\Uc_0, \ldots, \Uc_n \}$ of $\CC$ by subcategories that admit strong homotopic sections, that is, such that for every $i \in \{0, \ldots, n\} $ there is a functor $s_i \colon \Uc_i \rightarrow \Dc$ which is a strong homotopic right inverse of $F$,  in the sense that $P \circ s_i \simeq \iota_i$, where $\iota_i \colon \Uc_i \rightarrow \CC$ is the inclusion.   
\end{defi}

We introduce the novel concept of {\em weak homotopic sectional category} or {\em weak \v{S}varc genus}.

\begin{defi}
    Let $F\co \CC \rightarrow \Dc$ be a functor, its {\em weak homotopic sectional category} or {\em weak \v{S}varc genus}, denoted by $\sG_w (P)$, is the least positive integer $n$ such that there is a geometric cover $\{\Uc_0, \ldots, \Uc_n \}$ of $\CC$ by subcategories that admit local weak homotopic sections, i.e. such that for every $i \in \{0, \ldots, n\} $ there is a functor $s_i \colon \Uc_i \rightarrow \Dc$ that is a right weak homotopic inverse of $F$, in the sense that $P \circ s_i \simeq_w \iota_i$.
\end{defi}

\begin{prop}\label{prop:homotopic-sections_fibration_are_sections}
    Let $P\co\E \rightarrow \Ba$ be a functor. 
    \begin{enumerate}
        \item If $P\co\E \rightarrow \Ba$ is a strong fibration, then     $\sG_s(P)=\sC(P).$
        \item If  $P\co\E \rightarrow \Ba$ is a weak fibration, then 
    $\sG_w(P)=\sC(P).$
    \end{enumerate}
\end{prop}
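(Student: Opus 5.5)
The plan is to prove the two inequalities in each case. The inequality $\sG_s(P)\le\sC(P)$ (respectively $\sG_w(P)\le\sC(P)$) is immediate: a genuine local section $s_i\colon \Uc_i\to\E$ with $P\circ s_i=\iota_i$ is in particular a strong homotopic section, via the constant homotopy of length $0$. By Proposition~\ref{prop:Strong_Homotopic_entails_weak_Homotpic} it is then also a weak homotopic one. Hence any geometric cover witnessing $\sC(P)$ also witnesses the genus.

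For the reverse inequality I fix a geometric cover $\{\Uc_0,\dots,\Uc_n\}$ of $\Ba$ realizing the genus. Each $\Uc_i$ comes with $s_i\colon\Uc_i\to\E$ such that $P\circ s_i$ is homotopic to $\iota_i$, and the idea is to deform $s_i$ into an honest section using the lifting property, exactly as in topology.

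In case (1) I take a strong homotopy $H\colon \Uc_i\times\mathbb{I}_m\to\Ba$ with $H(-,0)=P\circ s_i$ and $H(-,m)=\iota_i$. If the homotopy is given in the other direction, I invert it, reversing the zig-zag (after padding to even length as in the earlier lemma). The square with top arrow $s_i$, left arrow $i_0$ and bottom arrow $H$ commutes. Since $P$ is a strong fibration, there is a lift $\wtilde H\colon\Uc_i\times\mathbb{I}_m\to\E$. Setting $s_i'=\wtilde H(-,m)$ gives $P\circ s_i'=H(-,m)=\iota_i$, so $s_i'$ is a genuine local section. The same cover therefore works for $\sC(P)$.

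In case (2) I take a weak homotopy $H\colon\Uc_i\to\Pa\Ba$ with $\pi_0\circ H=P\circ s_i$ and $\pi_1\circ H=\iota_i$, using symmetry of $\simeq_w$ to orient it. The weak fibration property gives $\wtilde H\colon\Uc_i\to\Pa\E$ with $\Pa P\circ\wtilde H=H$ and $\pi_0\circ\wtilde H=s_i$. I define $s_i'=\pi_1\circ\wtilde H$. Then Lemma~\ref{Conmutativo_caminos} yields
\[
P\circ s_i'=P\circ\pi_1\circ\wtilde H=\pi_1\circ\Pa P\circ\wtilde H=\pi_1\circ H=\iota_i .
\]

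The main technical point is Lemma~\ref{Conmutativo_caminos}, which is only checked on objects of $\mathcal{P}\CC$. It must hold as an equality of functors on the localization $\Pa\CC$, and this follows from the universal property of localization, since both composites agree on $\mathcal{P}\CC$. A second point of care is the symmetry of homotopies: the direction must be chosen so that the homotopy starts at $P\circ s_i$, since the lifting condition is stated only at the initial end. This is guaranteed by the remark that both relations are equivalence relations.
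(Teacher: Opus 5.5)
Your proposal is correct and follows essentially the same route as the paper. The easy inequality holds because a genuine local section is a homotopic section, and for the reverse you lift the homotopy between $P\circ s_i$ and $\iota_i$ using the fibration property, then take the far endpoint of the lift as an honest local section. Your orientation (homotopy starting at $P\circ s_i$, lifted along $i_0$ resp.\ $\pi_0$, new section read off at $m$ resp.\ via $\pi_1$) is in fact the consistent one, whereas the paper's write-up lifts along $i_m$ in (1) and states the endpoints of $H$ the wrong way round in (2).
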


%\q{He definido comandos para sC y sG. En mi opini\'on llamarle sG al g\'enero no tiene l\'ogica, pero no lo he cambiado}
\begin{proof}
    We begin by proving (1). It holds that $sG(P) \leq sC(P)$ since any local section is a homotopic section. To prove the converse inequality take any geometric cover $\{\Uc_0, \ldots, \Uc_n \}$ with local homotopic sections $s_i$. For each $i \in \{0,\ldots,n\}$ we have a strong homotopy $H \colon \Uc_i \times \mathbb{I}_m \rightarrow \Ba$ such that $H_0= \iota_i, H_m= P\circ s_i$ and the following diagram commutes:
    $$
    % https://tikzcd.yichuanshen.de/#N4Igdg9gJgpgziAXAbVABwnAlgFyxMJZABgBoBGAXVJADcBDAGwFcYkQBVAfSwAIAdfngC28Af2H0cACwBGs4AEkAvl2EhlpdJlz5CKchWp0mrdoIBC9DVpAZseAkTLFjDFm0SceN7Q71Ehq407mZeggCiGsYwUADm8ESgAGYAThDqiGQgOBBIhiYe7AASIDSM9LIwjAAKOo76IKlYcdI4viBpGUgATDS5SADMIaaeIHA+minpmcM5eYgFoWM1HV2ZffNI2cvsWGplIBVVtfUBXs2t7cqUykA
\begin{tikzcd}
U_i \arrow[r, "s_i"] \arrow[d, "i_m"']  & \E \arrow[d, "P"] \\
U_i \times \mathbb{I}_m \arrow[r, "H"'] & \Ba              
\end{tikzcd}
    $$
    Now by the homotopy lifting property there is a homotopy $\wtilde H  \colon \Uc_i \times \mathbb{I}_m \rightarrow \E$ such that $P \circ \wtilde H _0= H_0= \iota_i$, so $\wtilde H_0$ is a local section.

    We address (2). The same argument as before shows $sG_w(P) \leq sC(P)$. We prove the converse inequality. take any geometric cover $\{\Uc_0, \ldots, \Uc_n \}$ with local weak homotopic sections $s_i$. For each $i \in \{0,\ldots,n\}$ we have a homotopy $H \colon \Uc_i \rightarrow \Pa \Ba$ between $P \circ s_i$ and $ \iota_i$, we have that $\pi_1 \circ H= P \circ s_i $ and $\pi_0 \circ H= \iota_i$. Hence, we have the following commutative diagram :
    $$
% https://tikzcd.yichuanshen.de/#N4Igdg9gJgpgziAXAbVABwnAlgFyxMJZABgBoBGAXVJADcBDAGwFcYkQBVAfSxAF9S6TLnyEU5CtTpNW7ADpyACvQAECgEL1+gkBmx4CRCcSkMWbRCAXK1cgKLah+0UQBMpEzTOzLChwKcRQxR3Ki8ZCys5TX4pGCgAc3giUAAzACcIAFskMhAcCCQJaXN2AAlHEAzspABmGgKkABZw0stFSuqcxGLGxCaAqszu93zCxFrBrqRRvuLvSOtVDqnh3Ibx+pAAIxgwKGaATlafEDgeTrXEPL7Rhfk5AHc8RlgVMpVYviA
\begin{tikzcd}
                                                                             & \Pa \E \arrow[r] \arrow[d, "\Pa P"] & \E \arrow[d, "P"] \\
U_i \arrow[r, "H"] \arrow[rru, "s_i", bend left=49] \arrow[ru, "\wtilde H "] & \Pa \Ba \arrow[r]                   & \Ba              
\end{tikzcd}
    $$
     where $\wtilde H  \colon \Uc_i \rightarrow \Pa\E$ is the lift defined by the definiton of weak fibration. Hence we have that $P \circ \pi_1 \circ  \wtilde H = \pi_1 \circ \Pa P \circ \wtilde H= \pi_1 \circ H= \iota_i$ where the third equality follows from Proposition \ref{Conmutativo_caminos}), so $\pi_1 \circ \wtilde H$ is a local weak homotopic section.
\end{proof}

\begin{comment}

\begin{example}
    Let $c \colon \bullet \rightarrow \CC$ be a functor. We have that the existence of a section for $c$ will imply that $\CC$ is also the category $\bullet$. Nonetheless the existence of a weak or Strong homotopic section will be the same as being contractible.

    If we take the category $\mathbb{I}$ defined in Example \ref{Zig-zag_infinity} ad we take $0 \colon \bullet \rightarrow \mathbb{I}$ we have that $\sC(0)=\infty$ since there is no finite cover of $\mathbb{I}$ by trivial categories, i.e. $\mathbb{I}$ is not a finite discrete category. Moreover we have that $\sG_w(0)=0$ since there is a weak homotopic section given that $\mathbb{I}$ is weak-contractible. Moreover we know that $\sG_S(0)=\infty$ since every finite cover of $\mathbb{I}$ will have a subcategory with an infinity number of objects and by a similar argument as the one that we gave in Example \ref{Zig-zag_infinity} this subcategory would not be contractible.  
\end{example}
    
\end{comment}

%The following two results are purely formal and both hold for strong and weak homotopies. In both cases we will denote by $\simeq$ a homotopy equivalence that can be interpreted as the strong or weak one, and we will use $\sG$ for both the strong and weak Svarc Genus. 

\begin{lemma}\label{Composition}
    If $F\colon \CC \rightarrow \Dc$ and $G: \Dc \rightarrow \E$ are two composable functors, we have that $$\sG_w(G) \leq \sG_w (G\circ F)(\mathrm{resp.} \;\sG(G) \leq \sG (G \circ F)).$$
\end{lemma}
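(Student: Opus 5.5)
The argument transfers local homotopic sections of $G\circ F$ to local homotopic sections of $G$ by post-composing with $F$, using the same geometric cover. Both functors have codomain $\E$, so a cover of $\E$ serves for both. Note that in the definitions the geometric cover lives on the codomain of the functor and the sections map back into the domain, whatever the letters in the definition say.

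Suppose $\sG_w(G\circ F)=n$, witnessed by a geometric cover $\{\Uc_0,\dots,\Uc_n\}$ of $\E$ and functors $s_i\colon \Uc_i\to \CC$ with $(G\circ F)\circ s_i \simeq_w \iota_i$. If no such finite $n$ exists, the inequality is trivial. Define $t_i := F\circ s_i\colon \Uc_i\to \Dc$. Then
\[
G\circ t_i = G\circ(F\circ s_i) = (G\circ F)\circ s_i \simeq_w \iota_i,
\]
so the same weak homotopy $H_i\colon \Uc_i\to \Pa\E$ works verbatim. Indeed, $\pi_0\circ H_i=\iota_i$ and $\pi_1\circ H_i=(G\circ F)\circ s_i=G\circ t_i$, or the reverse, depending on the orientation convention. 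By the symmetry of $\simeq_w$, which is an equivalence relation, the orientation does not matter.

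Hence $\{\Uc_0,\dots,\Uc_n\}$ is a geometric cover of $\E$ by subcategories admitting local weak homotopic sections $t_i$ of $G$, and therefore $\sG_w(G)\le n$.

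The strong case is identical. A strong homotopy $\Uc_i\times\mathbb{I}_m\to\E$ between $\iota_i$ and $(G\circ F)\circ s_i=G\circ(F\circ s_i)$ is already a strong homotopy between $\iota_i$ and $G\circ t_i$.

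There is essentially no obstacle. The only point needing care is that nothing is composed on the $\E$ side, so no functoriality of $\Pa$ and no use of Lemma~\ref{Conmutativo_caminos} is required; the homotopy is reused unchanged and only the section is modified.
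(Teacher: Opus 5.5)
Your proof is correct and is essentially the paper's argument: keep the cover of $\E$ and the homotopy, and replace each local section $s_i$ of $G\circ F$ by $F\circ s_i$. The paper writes $s_i\circ F$, which is a typo for your composite, and your remarks on the infinite case, the orientation of the homotopy, and the strong version fill in details the paper leaves implicit.
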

\begin{proof}
    Suppose that we have a geometric cover $\{\Uc_0,\dots, \Uc_n\}$ with local weak homotopic sections for $G \circ F$. That means that for every $i\in \{0,...,n\}$ we have this homotopy commutative diagram:
    $$
    % https://tikzcd.yichuanshen.de/#N4Igdg9gJgpgziAXAbVABwnAlgFyxMJZARgBoAGAXVJADcBDAGwFcYkQAdDgYW5AF9S6TLnyEUZYtTpNW7LgBEAxgKEgM2PASJkATNIYs2iThwCiq4ZrFFypfTUNyTXAKpKA+lgHSYUAObwRKAAZgBOEAC2SGQgOBBIuo6yxiAA4pYg4VFIdnEJiLFOqQBimdnRiADMNPGJyUbyHPg49F7lEZU1+bkNziBw7fyU-EA
\begin{tikzcd}
                                              & \CC \arrow[d, "F"] \\
                                              & \Dc \arrow[d, "G"] \\
\Uc_i \arrow[r, "\iota_i"] \arrow[ruu, "s_i"] & \E                
\end{tikzcd}
    $$

This means that $ (G \circ F) \circ s_i \simeq_w \iota_i$ hence $G \circ (F \circ s_i) \simeq_w \iota_i$, therefore $s_i \circ F$ is a local weak homotopic section for $G$.
\end{proof}

\begin{lemma}\label{Homotopy_Invariance_Svarc}
    Let $F \colon \E_1 \rightarrow \E_2$ be a weak homotopy equivalence (resp. strong homotopy equivalence) and $P \colon \E_2 \rightarrow \Ba$ a functor. We have that $\sG(P)_w=\sG_w(P\circ F)$
\end{lemma}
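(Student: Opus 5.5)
We prove the two inequalities $\sG_w(P)\leq \sG_w(P\circ F)$ and $\sG_w(P\circ F)\leq \sG_w(P)$ separately; the strong case is identical with $\simeq$ in place of $\simeq_w$. The first inequality is Lemma~\ref{Composition} applied to the composable pair $F\colon \E_1\to\E_2$ and $P\colon \E_2\to\Ba$. Concretely, a local weak homotopic section $s_i\colon \Uc_i\to\E_1$ of $P\circ F$ gives the local weak homotopic section $F\circ s_i$ of $P$ over the same geometric cover. This direction does not use that $F$ is an equivalence.

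For the reverse inequality, let $G\colon \E_2\to\E_1$ be a weak homotopy inverse of $F$, so that $F\circ G\simeq_w \id_{\E_2}$. Take a geometric cover $\{\Uc_0,\dots,\Uc_n\}$ of $\Ba$ with local weak homotopic sections $s_i\colon\Uc_i\to\E_2$ of $P$, meaning $P\circ s_i\simeq_w\iota_i$. We propose $G\circ s_i\colon \Uc_i\to\E_1$ as a local weak homotopic section of $P\circ F$ over the same cover. It then suffices to check
\[
(P\circ F)\circ(G\circ s_i)=P\circ(F\circ G)\circ s_i\;\simeq_w\; P\circ s_i\;\simeq_w\;\iota_i ,
\]
and to use transitivity of $\simeq_w$, which the paper states (weak homotopy is an equivalence relation, via concatenation of paths).

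The key step is that weak homotopy is compatible with composition on both sides. If $K\colon \E_2\to \Pa\E_2$ is a weak homotopy from $F\circ G$ to $\id_{\E_2}$, then $\Pa P\circ K\circ s_i\colon \Uc_i\to\Pa\Ba$ is a weak homotopy from $P\circ F\circ G\circ s_i$ to $P\circ s_i$:
\begin{itemize}
\item precomposition with $s_i$ causes no difficulty;
\item postcomposition uses the functoriality of $\Pa$ together with Lemma~\ref{Conmutativo_caminos}, which gives $\pi_j\circ\Pa P\circ K\circ s_i=P\circ\pi_j\circ K\circ s_i$ for $j=0,1$.
\end{itemize}
In the strong case the analogous facts are immediate: one uses $P\circ H\circ(s_i\times \id)$ for a homotopy $H\colon \E_2\times\mathbb{I}_m\to\E_2$.

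The only real obstacle is this compatibility of $\simeq_w$ with composition and the transitivity of $\simeq_w$. Transitivity requires concatenating paths functorially in $\Pa\Ba$, that is, checking that concatenation respects the localization. The paper has already asserted that weak homotopy is an equivalence relation, so we take transitivity as given and only verify the whiskering step above.
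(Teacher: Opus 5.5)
Your proposal is correct and is essentially the paper's proof. One inequality comes from Lemma~\ref{Composition}; for the other, the paper likewise takes a weak homotopy inverse $G$ of $F$ and uses $G\circ s_i$ as a local weak homotopic section of $P\circ F$, via $P\circ F\circ G\circ s_i\simeq_w P\circ s_i\simeq_w\iota_i$, with the whiskering step (your $\Pa P\circ K\circ s_i$ together with Lemma~\ref{Conmutativo_caminos}) left implicit there.
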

\begin{proof}
    From the previous proposition we know that $\sG(P \circ F) \geq \sG(P)$. For the other inequality assume that you a cover $\{U_0, \dots,U_n\}$ of $\Ba$ with subcategories that have a local weak homotopic section $s_i \colon U_i \rightarrow \E_2 $ for $P$, i.e. $\iota_i \simeq_w P \circ s_i$. Take now a homotopy inverse $G \colon \E_2 \rightarrow \E_1$ of $F$. We claim that $G\circ s_i$ is a local weak homotopic section for $P$. Indeed we have that $P \circ F\circ G  \circ s_i \simeq_w P \circ s_i \simeq_w \iota_i$.
\end{proof}

\section{Motion planning in small categories}\label{sec:motion_planning}

\subsection{The path fibration}
The endpoint functor from the path category to the product of the base with itself plays the role of the classical path fibration in topology. We prove that this functor is indeed a fibration in the sense of Definition \ref{Strong_Homotopy}, providing the key bridge between categorical paths and motion planning.

\begin{teo}\label{Initial_Final_Path_fibration}
    The endpoint functor $\pi \colon \Pa \Ba  \rightarrow \Ba \times \Ba$ defined in Subsection \ref{subsec:motion_planning_functors} is a length-preserving weak fibration (and therefore a strong fibration). 
\end{teo}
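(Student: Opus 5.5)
We need to lift a weak homotopy $H\colon \CC \to \Pa(\Ba\times\Ba)$ together with a functor $F\colon \CC\to \Pa\Ba$ satisfying $\pi\circ F=\pi_0\circ H$. The lift should be a functor $\wH\colon\CC\to\Pa\Pa\Ba$ with $\pi_0\circ\wH=F$ and $\Pa\pi\circ\wH=H$, and each $\wH(c)$ should have the same length as $H(c)$.

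First we use Lemma~\ref{Product} to identify $\Pa(\Ba\times\Ba)$ with $\Pa\Ba\times\Pa\Ba$. Then $H$ amounts to a pair $(H_0,H_1)$ of weak homotopies $\CC\to\Pa\Ba$. The compatibility condition says that $H_0(c)$ starts at $F(c)(0)$ and $H_1(c)$ starts at $F(c)(n_c)$, where $n_c$ is the length of $F(c)$. Padding paths with identities, which is invisible in the localization, we may assume $H_0(c)$ and $H_1(c)$ have a common length $m_c$.

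The lift is a "path of paths". For $0\le i\le m_c$ we set
\[
\wH(c)(i)\;=\;H_1(c)|_{[0,i]}\circ F(c)\circ H_0(c)|_{[0,i]}^{-1},
\]
using the inverse and concatenation operations on paths. Stage $0$ is $F(c)$ up to the identity morphisms in $W$, and stage $i$ runs from $H_0(c)(i)$ to $H_1(c)(i)$. The arrows between consecutive stages are morphisms $(L,\alpha)$ in $\mathcal P\Ba$. For the reparametrization $L$ we collapse the newly added segments onto the endpoints, exactly as in the square diagram used in the proof of Theorem~\ref{functor_factorisation_through_fibration}. For the natural transformation $\alpha$ we take the identity on the middle $F(c)$ part, the morphism $H_j(c)(i)\to H_j(c)(i{+}1)$ or its reverse at the new ends, and identities elsewhere. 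The direction of each arrow follows the zig-zag direction of $\mathbb I_{m_c}$.

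By construction the endpoints of stage $i$ are $(H_0(c)(i),H_1(c)(i))$, so $\Pa\pi\circ\wH(c)=H(c)$ and $\pi_0\wH(c)=F(c)$. The length of $\wH(c)$ is $m_c$, so the lift preserves length.

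On morphisms, a map $f\colon c\to d$ gives $F(f)=(L_f,\beta_f)$ and $H_j(f)=(K^j_f,\alpha^j_f)$, and we must produce a morphism of paths of paths. The main obstacle is that $H_0(f)$ and $H_1(f)$ may carry different reparametrizations $K^0_f\ne K^1_f$, although a morphism in $\Pa\Pa\Ba$ allows only one outer reparametrization.

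To handle this, we first pad both $H_0(c)$ and $H_1(c)$ (and likewise for $d$) so that they are indexed by a common refinement of the two reparametrizations. This is legitimate because morphisms in $W$ become invertible in the localization, so any zig-zag through padded paths is an actual morphism of $\Pa\Ba$. With a common outer map in place, each stage of the morphism is the obvious one: $\beta_f$ on the middle part, $\alpha^0_f$ and $\alpha^1_f$ on the two ends, with the inner reparametrization assembled from $L_f$ and the $K^j_f$.

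It then remains to check naturality against the collapsing maps, which is a diagram chase, and to check that the assignment respects composition and identities modulo $W$. The composition check is the technical heart, and we argue it in the same way as the functoriality claim in Theorem~\ref{functor_factorisation_through_fibration}. Finally, Theorem~\ref{under_length_preserving_strong_weak_equivalence}(1) gives that $\pi$ is a strong fibration.
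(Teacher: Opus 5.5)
Your object-level construction is the paper's. You take stage $i$ to be $H_1(c)|_{[0,i]}\circ F(c)\circ H_0(c)|_{[0,i]}^{-1}$ and join consecutive stages through padding, with the end arrows of $H_j(c)$ as components. You then lift a morphism $f$ stagewise from $\beta_f$ in the middle and $\alpha^0_f,\alpha^1_f$ at the ends. Like the paper, you only really handle direct morphisms and defer well-definedness on the localization.

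The step that fails is your fix for the ``main obstacle''. The outer index of a path of paths is shared by both ends: stage $i$ carries the pair $(H_0(c)(i),H_1(c)(i))$. An outer $W$-morphism $(P,1)\colon \wH(c)\circ P\to\wH(c)$ is sent by $\Pa\pi$ to $(H_0(c)P,H_1(c)P)$, so it pads both ends with the same pattern. A common padding leaves $K^0_fP\neq K^1_fP$, since $P$ is surjective, so it produces no common outer map. With different paddings $P^0\neq P^1$, the pair $(H_0(c)P^0,H_1(c)P^1)$ is not a padding of $H(c)$ in $\mathcal{P}(\Ba\times\Ba)$. The path of paths built on it is therefore not obtained from $\wH(c)$ by any $W$-morphism, and inverting $W$ does not make your zig-zag a legitimate morphism. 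Moreover, $\wH(c)$ cannot depend on $f$, and $\Pa\pi\circ\wH=H$ must hold as a strict equality.

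The obstacle is an artifact of splitting $H$ through Lemma~\ref{Product}, and the remedy is not to split. $H(c)$ is already a functor $\mathbb{I}_m\to\Ba\times\Ba$, i.e.\ a pair of paths of equal length. So no padding is needed, which matters because padding is not invisible on objects and $\bar\pi_0\circ\wH=F$, $\Pa\pi\circ\wH=H$ are equalities. Also, $H(f)$ is represented by a zig-zag in $\mathcal{P}(\Ba\times\Ba)$ in which every arrow, whether direct $(K,(\alpha^0,\alpha^1))$ or in $W$, carries a single reparametrization for both coordinates. For a direct piece your stagewise recipe then works verbatim. The middle is glued to the ends via $\beta_f(0)=\alpha^0_f(0)$ and $\beta_f(n_c)=\alpha^1_f(0)$, which is exactly what $\pi\circ F=\pi_0\circ H$ gives, and $\Pa\pi$ of the lifted piece is that piece itself. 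Matching general zig-zag representatives of $F(f)$ and $H(f)$, and checking independence of choices, remains open both in your proposal and in the paper, which only asserts that the construction ``clearly defines a functor between the localizations''.

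A minor point: a direct morphism of $\mathcal{P}\Ba$ never increases length. So ``collapsing the new segments'' describes the stage arrow only for $i\leftarrow i+1$. For $i\to i+1$ you need the inverse of the padding of stage $i$ followed by an identity-reparametrized map, as in the paper's first diagram.
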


\begin{proof}
    Suppose that we have a homotopy $H \colon \CC \rightarrow \Ba \times \Ba$ and $F \colon \CC \rightarrow \mathrm{P} \Ba$. We must define $\hat{H} \colon \CC \rightarrow \mathrm{P} \mathrm{P} \Ba$ such that the following diagram commutes:
    $$
% https://tikzcd.yichuanshen.de/#N4Igdg9gJgpgziAXAbVABwnAlgFyxMJZABgBoBGAXVJADcBDAGwFcYkQAdDgYW5AF9S6TLnyEU5CtTpNW7LgFt6OABYAnBcAAK-AAQAKLgCF6urngXwzHEwEoBQkBmx4CRScWkMWbRJw5KqhraeorK6po61iYOwi5iRABMpJ403nJ+YUGRoTb0sU4iruLIyVRpsr7+JuZYlnDR+fzSMFAA5vBEoABmahAKSGQgOBBIkjI+7AASBb39YzQjSAAsFZOZHGhYAPrEs30DiENLiMkTGf4qysBT-Pvzp4ujiOPpVVkRIdZb94cAzE8VmsLlwfoIegckGcTgDzu8OAAjehqYCgrD8XYgGiMegImCMLRFBJ+NRYNoqHC-QaAxCwvFgKBIABsQze7AAYgJKPwgA
\begin{tikzcd}
                                                                        & \mathrm{P} \mathrm{P} \Ba \arrow[d, "\mathrm{P} \pi"] \arrow[r, "\bar{\pi}_0"'] & \mathrm{P} \Ba \arrow[d, "\pi"] \\
\CC \arrow[r, "H"] \arrow[ru, "\hat{H}"] \arrow[rru, "F", bend left=60] & \mathrm{P} (\Ba \times \Ba) \arrow[r, "\pi_0"]                                  & \Ba\times \Ba                  
\end{tikzcd}
    $$
    This means that we need for each object $c$ in $\CC$ a path of paths $\wH(c)$in $\Ba$. Since we want the diagram to commute $\wH(c)$ has to satisfy some condtions. First, we have that $\bar{\pi}_0\circ \wH=F$, i.e. $\wH(c)(0)=F(c)$ wich means that the first element of the path of paths is the path given by $F$. Furthermore we also want that $\Pa \pi \circ \wH= H(c)$. This means that for each $H(c)=(H_1(c),H_2(c))$ we have that $\wH(c)(i)(0)=H_1(c)(i)$ and that $\wH(c)(i)=H_2(c)(i)$. 

    In order to construct this path of paths we begin with the path $F(c)$:
    $$
    % https://tikzcd.yichuanshen.de/#N4Igdg9gJgpgziAXAbVABwnAlgFyxMJZABgBpiBdUkANwEMAbAVxiRADEAKAYwEpPivALwAJAPoBGHv0EgAvqXSZc+QignkqtRizZc+nAEy95ikBmx4CRQ5ur1mrRCAA6LqBBwIFSy6qIAzHbajnrSnAC2wuKG4bJyWjBQAObwRKAAZgBOEBFIZCA4EEgSPiDZuUi2hcWIpWYVeYjVRUgBCXJAA
\begin{tikzcd}
F(c)(0)=H_1(c)(0) \arrow[r] & F(c)(2) & \dots \arrow[l] \arrow[r] & F(c)(m)=H_2(c)(0)
\end{tikzcd}
    $$
    where the identities come from the conditions that we impose on $F$. Now we need the next path and a morphism between them. The path will be constructed adding the objects $H_j(c)(1)$ at the begining and at the end. The morphism that connects both paths is given by the following diagram:
    $$
    \adjustbox{scale=0.70,center}{
    % https://tikzcd.yichuanshen.de/#N4Igdg9gJgpgziAXAbVABwnAlgFyxMJZAJgBoAGAXVJADcBDAGwFcYkQAJAfQEYAKAMYBKPuSEgAvqXSZc+QigDMFanSat2AMUEie4qTOx4CRACwqaDFm0QgAOnagQcCAyAxH5RAKwW119m5iHVF9aXdZYwUSUh5VKw1bbn5hUMlwjzkTJVj49RsQbVS9dMMs6PM4y3z2BycXUojPbORfKv9EkABJLgAjPgBbXrCyqKIANj8EgqCQsUbMsZRJ9unArmDU+bdFrxQeXOqApN4Qkp3IveQDqiPO5LmRpvKicimak5SRbYzLlrfVh9OKdivpVDAoABzeBEUAAMwAThABkg3iAcBAkDw3IjkUgyOjMYhseFcSjEMpCficUjyeYqYhvDS8YhJgymaTaUgAOw0DFIcbM8mU-mIAAcQp5fKJAE5JeLpUg5ZyWSKidz5Qd2fK0aLTPL6aKeOQdYriST4VzzWbjZqCUaLSAyVi1ViTSryTx7USeIoJJQJEA
\begin{tikzcd}
H_1(c)(0) \arrow[r] \arrow[d] & H_1(c)(0) \arrow[d] & H_1(c)(0) \arrow[r] \arrow[d] \arrow[l] & F(c)(1) \arrow[d] & \dots \arrow[l] & H_2(c)(0) \arrow[l] \arrow[r] \arrow[d] & H_2(c)(0) \arrow[d] \\
H_1(c)(1) \arrow[r]           & H_1(c)(1)           & H_1(c)(0) \arrow[r] \arrow[l]           & F(c)(1)           & \dots \arrow[l] & I_b(mb) \arrow[l] \arrow[r]             & H_2(c)(0)          
\end{tikzcd}}
    $$
    where the first and last morphisms are given by $H(1_c,i_1)$ and we have a repetition at the beginning since all paths begin at $0$. Now the next step is given by:
    $$
    \adjustbox{width=\textwidth}{
    % % https://tikzcd.yichuanshen.de/#N4Igdg9gJgpgziAXAbVABwnAlgFyxMJZARgBoAGAXVJADcBDAGwFcYkQAJAfWIAoBjAJS9igkAF9S6TLnyEUAJgrU6TVu258hvcmMnTseAkQDMymgxZtEIADq2oEHAn0gMhuUQAs51VY1cCgLCuhJSbjJG8iSkxCqW6jaawSJ64e6yxoqx8WrWnDwpoa4ZUaY5Fnns9o7OYQaZ0T5xlf5JgUVpDWUoAKy+CfncQdqi9REeWcj9LX6JBSPCYyWRnijkFXNDhdoKXRONRBtUrfPJo-ula8gAbANV7Yup41dTd7ODAU97EiowUABzeBEUAAMwAThAALZIMggHAQJDkVwQ6GwmgIpAKFGQmGIMzwxGIbHhVF4-qEpBeHFoxAUzGIG40vEAdgxRKZpNxSAJDIAHMykGzKYgAJyCsXspACrm04UMkwS+lE4gSnwi5GyvHEDYi6la2FwhmasHcxA6qXm1UG81KEUy0204i8onim3Oy3EbGUcRAA
\begin{tikzcd}
H_1(c)(1) \arrow[r]           & H_1(c)(1)           & H_1(c)(0) \arrow[l] \arrow[r]           & \dots & H_2(c)(0) \arrow[l] \arrow[r]           & H_2(c)(1)           & H_2(c)(1) \arrow[l]           \\
H_1(c)(2) \arrow[r] \arrow[u] & H_1(c)(1) \arrow[u] & H_1(c)(0) \arrow[l] \arrow[r] \arrow[u] & \dots & H_2(c)(0) \arrow[l] \arrow[r] \arrow[u] & H_2(c)(1) \arrow[u] & H_2(c)(2) \arrow[l] \arrow[u]
\end{tikzcd}}
    $$
    We repeat this until we get to the length $n$ of the path $H(c)$ and we finally obtain:
    $$
    \adjustbox{width=\textwidth}{
    % https://tikzcd.yichuanshen.de/#N4Igdg9gJgpgziAXAbVABwnAlgFyxMJZARgBoAGAXVJADcBDAGwFcYkQAJAfWIAoBjAJS8wAWmKCQAX1LpMufIRQAmCtTpNW7bnyEjRyyTLnY8BIgGY1NBizaIQAHUdQIOBMZAZTiogBZrDTttLmUBYTFDaVkveTMlElJidVstBx1w-QlokwVzFSSUzXtOHkzIoxjvPISrZJti9mdXdxzYn3zkAPqgtNKwvQq26viiAFZA1JLuAYjxStzRlAmeqZDZrIX2mqJyQobg9LLBrZHfFD2qA76MwfnhuPPkADZJxqONsWzPM87X1fe-XKRnUMCgAHN4ERQAAzABOEAAtkgyCAcBAkORPPCkSiaOikMpsQjkYgrGiMYgiTEcaSAhSkGNibjEBMGYhnszSQB2fGUzk0klIckExAADi5SF57IAnJLEDK+UgJYKWdLRRZ5WzRcR5fTRVjVaTiHt2X55cRUQaLaadbqjSjVOyVbChYhiCLKXKHe7PY6pJQpEA
\begin{tikzcd}
H_1(c)(n-1) \arrow[r]         & H_1(c)(n-1)                     & H_1(c)(n-2) \arrow[l] \arrow[r] & \dots & H_2(c)(n-2) \arrow[l] \arrow[r]           & H_2(c)(n-1)           & H_2(c)(n-1) \arrow[l]         \\
H_1(c)(n) \arrow[r] \arrow[u] & H_1(c)(n-1) \arrow[r] \arrow[u] & H_1(c)(n-2) \arrow[r] \arrow[u] & \dots & H_2(c)(n-2) \arrow[l] \arrow[r] \arrow[u] & H_2(c)(n-1) \arrow[u] & H_2(c)(n) \arrow[l] \arrow[u]
\end{tikzcd}}
    $$
    This construction can be extended to a functor where we take the path morphism $$H(f)=(H(f_1),H(f_2) \colon (H_1(c),H_2(c)) \rightarrow (H_1(d),H_2(d))$$ to a morphism between path of paths in the following way. $H(f)$ is a zig zag of arrows by the construction of the localization.
    
    Suppose that we have a direct morphism in the sense that $H(f)=((F_1,\alpha_1),(F_2,\alpha_2))$ where $(F_j,\alpha_j) \colon H_j(c)\rightarrow H_j(d)$ and $j \in \{1,2\}$. We can construct a direct morphism in $\Pa \Pa \Ba$. For each $i$ we have that there $\alpha_j(i) \colon H_j(c)(i) \rightarrow H_j(d)(i)$ so we can define a morphism between $\wH(c)(i) \rightarrow \wH(d)(i)$ as follows:
    $$
    \adjustbox{width=\textwidth}{% https://tikzcd.yichuanshen.de/#N4Igdg9gJgpgziAXAbVABwnAlgFyxMJZARgBoAGAXVJADcBDAGwFcYkQAJAfWIAoBjAJS8sAWmKCQAX1LpMufIRQAmCtTpNW7bnyEjRyyTLnY8BIgGY1NBizaIQAHUdQIOBMZAZTiogBZrDTttLmUBYTFDaVkveTMlZABWQNstB24wvTEJaJMFcxRyFM17Th5wkSMY73yEgDZi4PTQiqwqvPiiIuJ1VNKdXigI9tifApJSHpsSkL4h-RzPGs6UZKmgtLKw+eyR5d8UBvW+kO3h3NHa-0nemeaz-SiluIPkVWO7rcGIgz2X8asHyaThcbg86hgUAA5vAiKAAGYAJwgAFskGQQDgIEhyJ4kaj0TQsUhlHjkWjEFZMdjEKSYviKVTiYg-GSCYhktScWyKQ0uSyeUhOcyAOzTYHOJhoAAW9HKbQuDJxRJpAA5xZtJYwZXK+LtFeTCfziKoNqUtTr5b8DeyxfzVYLECaVUgHfTDU7TcziBZHUyacQMScHBbZS1IiMlU6Md7fe72YGXYgAJyO4hFfmp+MUgKZjXmxxSsNhfWOvnejPBkFFuUloyUKRAA
\begin{tikzcd}
H_1(c)(i) \arrow[r] \arrow[d, "\alpha_1(i)"] & H_1(c)(i-1) \arrow[d, "\alpha_1(i-1)"] & H_1(c)(i-2) \arrow[l] \arrow[r] \arrow[d, "\alpha_1(i-2)"] & \dots & H_2(c)(i-2) \arrow[l] \arrow[r] \arrow[d, "\alpha_2(i-2)"] & H_2(c)(i-1) \arrow[d, "\alpha_2(i-1)"] & H_2(c)(i) \arrow[l] \arrow[d, "\alpha_2(i)"] \\
H_1(d)(i) \arrow[r]                          & H_1(d)(i-1)                            & H_2(d)(i-2) \arrow[l] \arrow[r]                            & \dots & H_2(d)(i-2) \arrow[l] \arrow[r]                            & H_2(d)(i-1)                            & H_2(d)(i) \arrow[l]                         
\end{tikzcd}}
    $$
    We can glue all this morphis in a morphism between path of paths. Moreover, this clearly defines a functor between the localizations since this construction preserves weak equivalences.
\end{proof}

The previous result is the reason why  the endpoint functor is also referred to as the {\em path fibration.}

\begin{corollary}
    The functors $\pi_0$ and $\pi_1$ are weak fibrations.
\end{corollary}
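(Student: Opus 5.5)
The plan is to write each $\pi_j$ as a composite of two weak fibrations and then apply Proposition~\ref{Composition_Fibration}. Concretely, $\pi_0 = p_1 \circ \pi$ and $\pi_1 = p_2 \circ \pi$, where $\pi=(\pi_0,\pi_1)\colon \Pa\Ba \to \Ba\times\Ba$ is the endpoint functor and $p_1,p_2\colon \Ba\times\Ba\to\Ba$ are the projections. Both identities hold on the nose, because $\pi$ was defined as the product $(\pi_0,\pi_1)$ through the universal property of $\Ba\times\Ba$.

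The two factors are handled by earlier results. By Theorem~\ref{Initial_Final_Path_fibration}, $\pi$ is a (length-preserving) weak fibration. By Corollary~\ref{Projection}, applied with $\CC=\Dc=\Ba$, the projections $p_1$ and $p_2$ are weak fibrations.

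Proposition~\ref{Composition_Fibration} states that if $P_1\colon \E_2\to\E_1$ and $P_2\colon\E_1\to\Ba$ are weak fibrations, then the composite $\E_2\to\Ba$ is a weak fibration. I apply it with $\E_2=\Pa\Ba$, $\E_1=\Ba\times\Ba$, first map $\pi$ and second map $p_j$. This gives that $\pi_j=p_j\circ\pi$ is a weak fibration for $j=0,1$.

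The main point to check is that Proposition~\ref{Composition_Fibration} really applies in this order. Its proof first lifts the homotopy along the lower functor ($p_j$) and then along the upper one ($\pi$), using functoriality of $\Pa$. Both lifts are available here, so I do not expect any real obstacle.

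If length preservation is also wanted, I would repeat the composition argument and note that both lifts can be taken length-preserving. For $\pi$ this is Theorem~\ref{Initial_Final_Path_fibration}. For $p_j$ it follows from the constant-path lift in Proposition~\ref{prop:trivial_fibration}, transported through the pullback argument of Proposition~\ref{Pullback}.
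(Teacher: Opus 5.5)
Your proof is correct and follows the paper's argument. The paper also writes each $\pi_j$ as a projection composed with the endpoint functor $\pi$ and concludes from Corollary~\ref{Projection} and Proposition~\ref{Composition_Fibration}, using Theorem~\ref{Initial_Final_Path_fibration} implicitly where you cite it explicitly; your indexing $\pi_0=p_1\circ\pi$, $\pi_1=p_2\circ\pi$ also fixes the off-by-one in the paper's ``$\pi_i=p_i\circ\pi$'', and your last paragraph on length preservation goes beyond what the corollary claims and is not needed.
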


\begin{proof}
    It follows from Corollary \ref{Projection} and Proposition \ref{Composition_Fibration} since $\pi_i= p_i\circ \pi$.
\end{proof}

\subsection{LS-Category, Categorical Complexity and homotopic distance}

We introduce the novel notions of weak Lusternik-Schnirelmann category, weak categorical complexity and weak homotopic distance of small categories, while reviewing their strong counterparts introduced by Tanaka \cite{TANAKA} and the two last authors of this paper \cite{MAC-MOSQ-FUNCT}, respectively. 

\begin{defi}
    Let $\CC$ be a small category. The \emph{weak (resp. Strong) Lusternik-Schnirelmann category} of $\CC$, denoted by $\ccat_w(\CC)$ (resp. $\ccat(\CC))$, is the least non-negative integer $n \in \mathbb{Z}$ such that there is a geometric cover $\{\Uc_0,...,\Uc_n\}$ of $\CC$ verifying that for every $i\in \{0,...,n\}$ the inclusion $\iota_i\co \Uc_i \rightarrow \CC$ is weak (resp. strong) homotopic to a constant functor. 
\end{defi}

Let $\CC$ be a small category. We define the diagonal functor $\Delta     \colon     \CC \rightarrow \CC \times \CC$ as the functor that takes each object $c$ to $\Delta(c)=(c,c)$ and each morphism $f$ to $\Delta(f)=(f,f)$.

\begin{definition}
Let $\CC$ be a a small category. A subcategory $\Uc$ of  $\CC \times \CC$ is a {\em weak (resp. strong) Farber subcategory} if there is a functor $F    \colon     \Uc  \rightarrow \CC$ such that $\Delta \circ F$ is weak (resp. strong) homotopic to $\iota$ where $\iota\colon \Uc \rightarrow \CC \times \CC$ is the inclusion.
\end{definition}

\begin{defi}\label{Categorical_Complexity}
Let $\CC$ be a small category, the {\em weak (resp. strong) categorical complexity} of $\CC$, denoted by $\cTC(\CC)$ (resp. $\cTC_w(\CC)$), is the least non-negative integer $n \in \mathbb{Z}$ such that there is a geometric cover $\{\Uc_0,\dots,\Uc_n\}$ of $\CC \times \CC$  by Farber subcategories. 
\end{defi}

\begin{defi}\label{Homotopic_distance} Let $\CC$ and $\Dc$ be two small categories and let $F,G    \colon \CC    \rightarrow \Dc$ be two functors. The {\em weak (resp. strong) homotopic distance } $\cD(F,G)$ ($\cD_w(F,G)$) between $F$ and $G$ is the least non-negative integer $n \in \mathbb{Z}$ such that there is a geometric cover $\{\Uc_{0},\dots,\Uc_n\}$ such that $F|_{\Uc_i} \simeq_w G|_{\Uc_i}$ ( $F|_{\Uc_i} \simeq G|_{\Uc_i}$)for every $ 0 \leq i \leq  n$. 
\end{defi}

The last two authors of the present paper proved (see \cite{MAC-MOSQ-FUNCT}):
\begin{enumerate}
    \item Let $\CC$ be a connected category and $c \colon \CC \rightarrow \CC$ be a constant functor. Then
    $\ccat(\CC)= \cD(c,1_{\CC})$
    where $1_{\CC}$ is the identity.
    \item The strong categorical complexity of a small category $\CC$ is the strong homotopic distance between the two projections $p_1, p_2 \colon \CC \times \CC \rightarrow \CC$, that is, $\cTC(C) = \cD(p_1,p_2).$
\end{enumerate}

The analogous result for the weak Lusternik-Schnirelmann category follows immediately from the definition:

\begin{prop}\label{pro:LS-Category_Homotopic-Distance_weak}
     Let $\CC$ be a connected category and $c \colon \CC \rightarrow \CC$ be the constant functor with value the object $c$ in $\CC$. We have that:
    $$\mathrm{ccat}_w(\CC)= \cD_w(c,1_{\CC})$$
    where $1_{\CC}$ is the identity.
\end{prop}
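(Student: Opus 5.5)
The plan is to compare the two definitions directly and show that they are governed by the same family of geometric covers. For a subcategory $\Uc\subseteq\CC$ with inclusion $\iota\colon\Uc\to\CC$, the restrictions are $1_{\CC}|_{\Uc}=\iota$ and $c|_{\Uc}=c\circ\iota$, the constant functor on $\Uc$ with value $c$. So a cover $\{\Uc_0,\dots,\Uc_n\}$ witnesses $\cD_w(c,1_{\CC})\le n$ exactly when each $\iota_i$ is weak homotopic to the constant functor with value $c$. A cover witnesses $\ccat_w(\CC)\le n$ when each $\iota_i$ is weak homotopic to \emph{some} constant functor, with a value $c_i$ that may depend on $i$.

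The inequality $\ccat_w(\CC)\le\cD_w(c,1_{\CC})$ is then immediate: any cover for the distance is already a categorical cover, with every $c_i=c$.

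For the reverse inequality, take a cover with $\iota_i\simeq_w \mathrm{const}_{c_i}$. The key step is to show $\mathrm{const}_{c_i}\simeq_w\mathrm{const}_c$ as functors $\Uc_i\to\CC$; transitivity of $\simeq_w$, already noted in the paper, then finishes the argument.
\begin{itemize}
\item Since $\CC$ is connected, choose a path $J\colon\mathbb{I}_m\to\CC$ with $J(0)=c_i$ and $J(m)=c$.
\item Define $K\colon\Uc_i\to\Pa\CC$ by sending every object to $J$ and every morphism to the identity $(1_{\mathbb{I}_m},1_J)$.
\item This $K$ is a functor, and it satisfies $\pi_0\circ K=\mathrm{const}_{c_i}$ and $\pi_1\circ K=\mathrm{const}_c$ on both objects and morphisms, because $\pi_0$ and $\pi_1$ send identities to identities.
\end{itemize}
Equivalently, one can note that this is a strong homotopy $\Uc_i\times\mathbb{I}_m\to\CC$, $(u,k)\mapsto J(k)$, and apply Proposition~\ref{prop:Strong_Homotopic_entails_weak_Homotopic}.

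The only step needing care is transitivity of weak homotopy. This relies on concatenation and inversion of paths being functorial on $\Pa\CC$, with a possible parity adjustment, which the paper handles by assuming even length. Since the paper asserts that weak homotopy is an equivalence relation, I would simply invoke that. As a further simplification, the concatenation here is with a constant-in-$u$ path, so it can be carried out objectwise: for $u\in\Uc_i$, set $H'(u)=J\circ H(u)$, where $H(u)$ ends at $c_i$. On morphisms, extend the morphism of paths by the identity on the $J$ segment. This respects the localization because elements of $W$ stay in $W$.

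Since both inequalities hold, the two invariants coincide, including the case where both are infinite.
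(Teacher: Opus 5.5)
Your proposal is correct and is essentially the paper's argument. The paper only remarks that the identity follows immediately from the definitions, and you supply the unpacking that remark needs: connectedness gives $\mathrm{const}_{c_i}\simeq_w \mathrm{const}_c$ on each $\Uc_i$, and transitivity of $\simeq_w$ (which the paper asserts) finishes. The optional objectwise shortcut is unnecessary and, as stated, a little too quick: a morphism $H(f)$ in the localization may be represented by a zig-zag through paths that do not end at $c_i$, so ``extending by the identity on the $J$ segment'' would need some padding, and relying on the stated equivalence-relation property is the cleaner choice.
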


We prove the analogous result for the weak categorical complexity.

\begin{prop}\label{pro:Complexity_Homotopic_Distance_weak}
    The weak categorical complexity of $\CC$ is the weak homotopic distance between the two projections $p_1, p_2 \colon \CC \times \CC \rightarrow \CC$, that is, $$\mathrm{cTC}_w(C) = \cD_w(p_1,p_2).$$
\end{prop}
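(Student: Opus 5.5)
The plan is to prove the two inequalities $\cTC_w(\CC)\le \cD_w(p_1,p_2)$ and $\cD_w(p_1,p_2)\le \cTC_w(\CC)$ by showing that, for a subcategory $\Uc\subseteq \CC\times\CC$, the following are equivalent: (a) $\Uc$ is a weak Farber subcategory, and (b) $p_1|_{\Uc}\simeq_w p_2|_{\Uc}$. Since both invariants are defined as the least $n$ admitting a geometric cover $\{\Uc_0,\dots,\Uc_n\}$ of $\CC\times\CC$ with the respective property on each $\Uc_i$, this equivalence immediately gives the equality, with no further work on covers.

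For (a)$\Rightarrow$(b): suppose $F\colon\Uc\to\CC$ satisfies $\Delta\circ F\simeq_w \iota$, witnessed by $H\colon\Uc\to\Pa(\CC\times\CC)$ with $\pi_0 H=\Delta F$ and $\pi_1 H=\iota$. Post-compose with $\Pa p_j$ for $j=1,2$. By Lemma~\ref{Conmutativo_caminos}, $\pi_k\circ \Pa p_j=p_j\circ\pi_k$, so $\Pa p_j\circ H$ is a weak homotopy from $p_j\Delta F=F$ to $p_j\iota=p_j|_{\Uc}$. Thus $p_1|_{\Uc}\simeq_w F\simeq_w p_2|_{\Uc}$, and since $\simeq_w$ is an equivalence relation (as observed after the definition of weak homotopy, via inverse and concatenation of paths) we get (b).

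For (b)$\Rightarrow$(a): take $F=p_1|_{\Uc}$. We need $\Delta\circ p_1|_{\Uc}\simeq_w \iota=(p_1|_{\Uc},p_2|_{\Uc})$. Given a weak homotopy $K\colon\Uc\to\Pa\CC$ from $p_1|_\Uc$ to $p_2|_\Uc$, and the constant weak homotopy $C\colon\Uc\to\Pa\CC$ from $p_1|_\Uc$ to itself (constant paths of length $0$), the pair $(C,K)\colon \Uc\to\Pa\CC\times\Pa\CC$ is a functor, and composing with the equivalence of Lemma~\ref{Product} gives $\widehat H\colon\Uc\to\Pa(\CC\times\CC)$. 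One then checks that $\pi_0\widehat H=(p_1,p_1)=\Delta p_1$ and $\pi_1\widehat H=(p_1,p_2)=\iota$, using that the comparison functor in Lemma~\ref{Product} is compatible with endpoints.

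The main obstacle is this last step: Lemma~\ref{Product} is stated only as an equivalence, so I must verify that the functor $\Pa\CC\times\Pa\CC\to\Pa(\CC\times\CC)$ strictly commutes with $\pi_0,\pi_1$ (rather than up to isomorphism), as weak homotopy requires strict equalities. I would handle this concretely: pad the constant path $C(u)$ to the length of $K(u)$ (a morphism in $W$, hence invertible in the localization) and pair the paths pointwise; on morphisms, pair the reparametrizations after padding, which is well defined on zig-zags because $W$-morphisms pair to $W$-morphisms. Endpoints are then visibly preserved, since padding and pairing fix the objects at $0$ and at the end.
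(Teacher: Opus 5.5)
Your proposal is correct and follows essentially the same route as the paper. Both proofs show that $\Uc$ is a weak Farber subcategory if and only if $p_1|_{\Uc}\simeq_w p_2|_{\Uc}$. In one direction you take $F=p_1|_{\Uc}$ and pair the given homotopy with a constant path of matching length. In the other you split the path in $\CC\times\CC$ into its two components and join one to the inverse of the other; your use of $\Pa p_j\circ H$ together with symmetry and transitivity of $\simeq_w$ is a modular repackaging of the paper's explicit concatenation of $I$ with $J^{-1}$, and your padding-and-pairing argument makes rigorous the strict compatibility with $\pi_0,\pi_1$ that the paper only asserts.
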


\begin{proof}
    Take a Farber subcategory $\Uc$ with inclusion $\iota \colon \Uc \rightarrow \CC \times \CC$. Since $\Uc$ is Farber there is a functor $F \colon \Uc \rightarrow \CC$ and a weak homotopy $H \colon \Uc \rightarrow \Pa (\CC \times \CC)$ between $\iota$ and $\Delta \circ F$, i.e. $\pi_0 \circ H=\iota$ and $\pi_1 \circ H= \Delta \circ F$. We have to define a weak homotopy $H' \colon \Uc \rightarrow \Pa\CC$ between $p_1$ and $p_2$, which is the same as giving a functorial form of assigning a path between $c$ and $d$ for any object $(c,d)$ in $\Uc$.
    
    For each object $(c,d)$ in $\Uc$ we have that $H(c)$ is a path between $(c,d)$ and $\Delta \circ F(c,d)=(F(c,d),F(c,d))$ and this is the same as a path from $I_c \colon \mathbb{I}_m \rightarrow \CC \times \CC$ from $c$ to $F(c,d)$ and a path $J_D \colon \mathbb{I}_m \rightarrow \CC \times \CC$ from $d$ to $F(c,d)$. We can take the composition of $I$ with the inverse of $J$ to obtain the wanted path. Moreover, this is functorial since for each morphism $(f,g) \colon (c,d) \rightarrow (a,b)$ we can define a morphism between them using the fact that there is a path morphism $H(f,g) \colon H(c,d) \rightarrow H(a,b)$ with means that there a morphism between $I_c$ and $I_a$ and between $J_d$ and $J_b$ that induces our wanted morphism between $H'(c,d)$ and $H'(a,b)$.

    Now take a subcategory $\Uc$ with a weak homotopy $H \colon \Uc \rightarrow \Pa\CC$ between $p_1$ and $p_2$. We can show that $\Uc$ is a Farber subcategory with the functor $F \colon \Uc \rightarrow \CC$ given by $p_1$, i.e. $\iota \simeq_w \Delta \circ p_1$. A homotopy $H' \colon \Uc \rightarrow$ between $\iota$ and $\Delta \circ p_1$ is a functorial assignment that takes any object $(c,d)$ to a path between $(c,d)$ and $\Delta \circ p_1(c,d)=(c,c)$. This can be done by using the fact that a path in $\CC \times \CC$ is the same as two paths in $\CC$. Hence we can take the path between $c$ and $d$ defined by $H(c,d)$ and the constant path between $c$ and $c$ with the same length as $H(c,d)$. This is functorial since $H$ is a functor.
\end{proof}

\begin{remark}\label{rmk:weak_Strong_Homotopy_Invariants}
    Observe that as a consequence of Proposition \ref{prop:Strong_Homotopic_entails_weak_Homotpic} and Proposition \ref{weak_Strong_Finite}, for each pair of functors $F,G \colon \CC \rightarrow \Dc$,  $\cD_w(F,G) \leq \cD (F,G).$ Therefore, $\ccat_w(\CC) \leq \ccat(\CC)$ and $\cTC_w(\CC) \leq \cTC(\CC)$. Furthermore, all inequalities are equalities if $\CC$ is finite by Proposition \ref{weak_Strong_Finite}.
\end{remark}

\subsection{Relations between Svarc Genus, sectional category and other homotopy invariants}
We prove a result, which may be seen as a counterpart of \cite[Theorem 2.8]{MAC-MOSQ} in the context of small categories, relating the previous homotopic invariants.

\begin{prop}\label{prop:secat_decreases_along_pullback}
    For every pullback of small categories:
    $$
    % https://tikzcd.yichuanshen.de/#N4Igdg9gJgpgziAXAbVABwnAlgFyxMJZABgBoBGAXVJADcBDAGwFcYkQAdDgIXoHIQAX1LpMufIRRli1Ok1bsuAUQHDR2PASLkKshizaJOPekJEgMGidtIya+hUeVDZMKAHN4RUADMAThAAtkg6IDgQSGRyBuwACqrm-kGRNOFIAEz28oYgAOJmvgHBiADMqRGImdGOILEFIEnFoWmlWTFGuaqUgkA
\begin{tikzcd}
\E' \arrow[d, "P'"] \arrow[r, "G'"] & \E \arrow[d, "P"] \\
\Ba' \arrow[r, "G"]                 & \Ba              
\end{tikzcd}
    $$
    we have that:
    $$\sC (P') \leq \sC(P).$$
    In particular, if $P$ is a weak (resp. strong) fibration, then
    $\sG_w(P') \leq \sG_w(P)$ (resp.$\sG_s(P) \leq \sG_s(P')$).
\end{prop}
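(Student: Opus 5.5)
The plan is to pull back a geometric cover of $\Ba$ along $G$ and to pull back the local sections with it. Suppose $\sC(P)=n$, realised by a geometric cover $\{\Uc_0,\dots,\Uc_n\}$ of $\Ba$ with functors $s_i\colon \Uc_i\to\E$ satisfying $P\circ s_i=\iota_i$. For each $i$, let $\Uc_i'=G^{-1}(\Uc_i)$ be the subcategory of $\Ba'$ whose morphisms are those $f$ with $G(f)\in\mathrm{Mor}(\Uc_i)$, together with the objects they involve. This is a subcategory because $G$ is a functor and $\Uc_i$ is closed under composition and identities.

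\textbf{Step 1: the pulled-back family is a geometric cover of $\Ba'$.} Take a chain of composable morphisms $(f_1,\dots,f_m)$ in $\Ba'$. Its image $(G(f_1),\dots,G(f_m))$ is a chain of composable morphisms in $\Ba$. Since $\{\Uc_i\}$ is a geometric cover, there is an index $i$ with every $G(f_j)\in\mathrm{Mor}(\Uc_i)$, so every $f_j$ lies in $\Uc_i'$.

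\textbf{Step 2: each $\Uc_i'$ admits a local section of $P'$.} Use the explicit description of the pullback recalled in the proof of Proposition \ref{Pullback}: $\E'$ is the subcategory of $\Ba'\times\E$ of pairs with $G(b)=P(e)$. Define $s_i'\colon \Uc_i'\to\E'$ by $s_i'(b)=(b,\,s_i(G(b)))$ and $s_i'(f)=(f,\,s_i(G(f)))$. This lands in $\E'$ because $P(s_i(G(b)))=G(b)$. It is functorial because each of its components is. Finally, $P'\circ s_i'$ is the inclusion $\Uc_i'\to\Ba'$. Equivalently, one could obtain $s_i'$ from the universal property of the pullback, applied to the inclusion $\Uc_i'\to\Ba'$ and to $s_i\circ G|_{\Uc_i'}$. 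Steps 1 and 2 together give $\sC(P')\le\sC(P)$.

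\textbf{Step 3: the "in particular" claim.} By Proposition \ref{Pullback}, $P'$ is a weak (resp.\ strong) fibration whenever $P$ is. Proposition \ref{prop:homotopic-sections_fibration_are_sections} then gives $\sG_w(P')=\sC(P')$ and $\sG_w(P)=\sC(P)$, and Step 2 yields $\sG_w(P')\le\sG_w(P)$. The strong case is the same argument with strong fibrations; note that the inequality obtained there is $\sG_s(P')\le\sG_s(P)$, in the same direction as the weak case and as $\sC(P')\le\sC(P)$.

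The main obstacle I expect is Step 1. One has to make sure that "subcategory" in a geometric cover allows the objects to be determined by the morphisms, so that the preimage $G^{-1}(\Uc_i)$ is a legitimate subcategory and chains involving identities are still covered. If this causes trouble, I would define $\Uc_i'$ to contain all objects $b$ with $G(b)\in\Uc_i$ together with all morphisms mapped into $\Uc_i$.
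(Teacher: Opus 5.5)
Your proof is correct and follows essentially the same route as the paper. Both pull the geometric cover back along $G$, define $s_i'(b)=(b,s_i(G(b)))$, and obtain the \v{S}varc genus statement from Proposition~\ref{Pullback} combined with Proposition~\ref{prop:homotopic-sections_fibration_are_sections}. You are also right that the strong case gives $\sG_s(P')\le\sG_s(P)$: the paper's argument proves exactly this, so the reversed inequality in the statement is a typo.
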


\begin{proof}
    Suppose that we have a geometric cover $\{\Uc_0, \dots \Uc_n\}$ of $\Ba$ with local sections $s_i \colon \Uc_i \rightarrow \E$. For each $0 \leq i \leq n$, we define $G^{-1}(\Uc_i)$ as the subcategory of $\Ba'$ with objects $b$ and morphisms $f$ in $\Ba'$ such that $G(b)$ and $G(f)$ are objects and morphisms in $\Uc_i$. %This defines a subcategory since every identity morphism of an object is also a morphism and the composition of two morphism that lies in $\Uc_i$ lies also in $\Uc_i$ by functoriality and the fact that $\Uc_i$ is a subcategory. 
    Then $\{G^{-1}(\Uc_0), \dots , G^{-1}(\Uc_n)\}$ is a geometric cover of $\Ba'$. %Indeed, take any chain of composable morphism $(f_1,\dots,f_n)$ in $\Ba'$ we have that $(G(f_1),\dots,G(f_n))$ is a chain of composable morphism in $\Ba$ and therefore there is $0 \leq i\leq n$ such that the chain is inside $\Uc_i$, hence $(f_1,\dots,f_n)$ lies inside $G^{-1}(\Uc_i)$.

    We define local sections $s'_i \colon G^{-1}(\Uc_i) \rightarrow \E'$ as $s_i'(b)=(b,s_i(G(b))$. This is well defined since $P(s (G(b)))=G(b)$ and it is clearly a local section using the fact that $P'(b,s_i(G(b))=b$.

    If $P$ is a weak (strong) fibration then $P'$ is also a weak (strong) fibration (Proposition \ref{Pullback}) and $\sC(P)= \sG_w(P) (\mathrm{resp.} \,\sC(P)= \sG(P))$ and $\sC(P')= \sG_w(P') (\mathrm{resp.} \, \sC(P')= \sG_(P'))$ (Proposition \ref{prop:homotopic-sections_fibration_are_sections}). %using the fact that the pullback of a strong (weak) fibration is a strong (weak) fibration (Proposition \ref{Pullback}) and the fact that in a strong (weak) fibration we can lift every local homotopy section to a local section that appears in Proposition \ref{prop:homotopic-sections_fibration_are_sections}.
\end{proof}

\begin{teo}\label{teo:Svarc_Genus_Homotopic_Distance}
Let $F,G \colon \CC \rightarrow \Dc$ be two funtors and let $\pi \colon \Pa \Dc \rightarrow \Dc \times \Dc$ be the path fibration. Consider the pullback:
$$
% https://tikzcd.yichuanshen.de/#N4Igdg9gJgpgziAXAbVABwnAlgFyxMJZABgBpiBdUkANwEMAbAVxiRAAUQBfU9TXfIRQBGclVqMWbADrT2dAASyAIgGNuvEBmx4CRMsPH1mrRCFkBhCxr47BRUYerGpZlaqXS8AW3ie13OIwUADm8ESgAGYAThDeSGQgOBBIwjxRsfGIiclIAEzOkqYgAI42IDFxqdS5iADMhSYy0mhY5ZVZBUkp9Y2uIAAUAGKkAOIAlIFcQA
\begin{tikzcd}
P \arrow[r, "p"] \arrow[d, "q"] & \Pa \Dc \arrow[d, "\pi"] \\
\CC \arrow[r, "{(F,G)}"]   & \Dc \times \Dc          
\end{tikzcd}
$$
Then, $\cD_w(F,G)=\sC(q)$ and $\sC(q)\leq \cD(F,G)$.  Moreover, if $\CC$ is finite, $\sC(q)=\cD(F,G).$
\end{teo}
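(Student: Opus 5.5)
The plan is to establish $\cD_w(F,G)=\sC(q)$ directly. The inequality $\sC(q)\leq\cD(F,G)$ then follows from Remark~\ref{rmk:weak_Strong_Homotopy_Invariants}, which gives $\cD_w(F,G)\leq\cD(F,G)$. The finite case follows from the same remark, using Proposition~\ref{weak_Strong_Finite}: when $\CC$ is finite, $\cD_w(F,G)=\cD(F,G)$.

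The key observation is a dictionary on subcategories $\Uc\subseteq\CC$ with inclusion $\iota$. Local sections of $q$ over $\Uc$ should correspond exactly to weak homotopies between $F|_{\Uc}$ and $G|_{\Uc}$.

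\emph{From sections to homotopies.} Given $s\colon\Uc\to P$ with $q\circ s=\iota$, put $H=p\circ s\colon\Uc\to\Pa\Dc$. Commutativity of the pullback square gives $\pi\circ H=(F,G)\circ q\circ s=(F,G)\circ\iota$. Composing with the projections $p_1,p_2$ yields $\pi_0\circ H=F|_{\Uc}$ and $\pi_1\circ H=G|_{\Uc}$. So $H$ is a weak homotopy $F|_{\Uc}\simeq_w G|_{\Uc}$.

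\emph{From homotopies to sections.} Given a weak homotopy $H\colon\Uc\to\Pa\Dc$ with $\pi_0\circ H=F|_{\Uc}$ and $\pi_1\circ H=G|_{\Uc}$, we have $\pi\circ H=(F,G)\circ\iota$. The universal property of the pullback then produces a unique $s\colon\Uc\to P$ with $p\circ s=H$ and $q\circ s=\iota$, which is a local section of $q$.

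Applying the dictionary to each member of a geometric cover $\{\Uc_0,\dots,\Uc_n\}$ of $\CC$ shows that the covers used to compute $\sC(q)$ and $\cD_w(F,G)$ are the same. Hence the two minima coincide, and this bijection is the main step. Note that it does not use Proposition~\ref{prop:homotopic-sections_fibration_are_sections}, although one could alternatively argue that $q$ is a weak fibration, by Proposition~\ref{Pullback} and Theorem~\ref{Initial_Final_Path_fibration}, and compare with $\sG_w$.

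The main obstacle is making the pullback genuinely usable. The pullback is taken in \textbf{cat}, and $\Pa\Dc$ is a localization, so morphisms of $P$ are pairs consisting of a morphism of $\CC$ and a zig-zag class in $\Pa\Dc$ over the same morphism of $\Dc\times\Dc$. I will first confirm, as in the proof of Proposition~\ref{Pullback}, that $P$ is the strict pullback, with objects $(c,I)$ where $\pi(I)=(F(c),G(c))$. I will also confirm that $\pi_0,\pi_1$ are the components of $\pi$ on the nose, since $\pi=(\pi_0,\pi_1)$ by definition. With this in place, the equalities above are strict equalities of functors and the universal property applies without homotopy-coherence issues.
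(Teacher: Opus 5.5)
Your proposal is correct and is essentially the paper's proof. Both directions of $\cD_w(F,G)=\sC(q)$ rest on the same dictionary: $s=(\iota,H)$ via the pullback's universal property, and $H=p\circ s$ using $\pi\circ p=(F,G)\circ q$. The strong and finite statements are likewise read off from Remark~\ref{rmk:weak_Strong_Homotopy_Invariants}.

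The only difference is that the paper first records $\sC(q)=\sG_w(q)=\sG(q)$, using that $q$ is a pulled-back path fibration. As you note, this detour is never used in its own argument, since both directions work with genuine sections.
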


\begin{proof}
   We begin by proving that $\sC(q)=\cD_w(F,G)$. First,   $\sC(q)=\sG_w(q)=\sG(q)$ by Proposition \ref{Pullback}, Proposition \ref{prop:homotopic-sections_fibration_are_sections} and Theorem \ref{Initial_Final_Path_fibration}.  Hence, it is enough to prove the inequalities $\sC_w(q) \leq \cD_w (F,G)$ and  $\cD_w(F,G) \leq \sG(q) $.

    We begin by proving the first inequality. Suppose that we have a geometric cover $\{U_0, \dots, U_n\}$ such that $F|_{\Uc_i} \simeq_w G|_{\Uc_i}$ for every $ 0 \leq i \leq  n$. We define the local sections $s_i \colon \Uc_i \rightarrow P$ as:
    $$s_i(c)=(c,H(c))$$ where $H \colon \Uc_i  \rightarrow \Pa \Dc$ is the weak homotopy between $F|_{\Uc_i} $ and $G|_{\Uc_i}$. 
    
    Conversely, suppose that we have a geometric cover $\{\Uc_0, \dots, \Uc_n\}$ with local sections $s_i \colon U_i \rightarrow P$ for every $ 0 \leq i \leq  n$. Take $i$ such that $ 0 \leq i \leq  n$ we must define a homotopy between $F|_{\Uc_i} $ and $G|_{\Uc_i}$ but this can easily be done usign $s_i$. Indeed, we define $H$ as $p \circ s_i$. We claim that this give us the wanted homotopy since we have that $\pi \circ H=\pi \circ p \circ s_i=(F,G) \circ q \circ s_i = (F,G) \circ \iota_i $. If we compose with the natural projections $p_1, p_2 \colon \Dc \times \Dc \rightarrow \Dc$ we have that $\pi_j \circ H= p_j \circ \pi \circ H= p_j \circ (F,G) \circ \iota_i$ that equals $F|_{\Uc_i}$ if $j=1$ and $ G|_{\Uc_i}$ if $j=2$.

    The last inequality and the equality under finiteness hypothesis follows from Remark \ref{rmk:weak_Strong_Homotopy_Invariants}.
\end{proof}

\begin{corollary}\label{cor:cTC-Sectional-Category}
 Let $\CC$ be a small category and $\pi \colon \Pa \CC \rightarrow \CC \times \CC$ be the path fibration. Then 
    $\mathrm{cTC}_w(\CC) = \sC(\pi)$ and $\sC(\pi) \leq \mathrm{cTC}(\CC)$ with a equality if $\CC$ is finite.
\end{corollary}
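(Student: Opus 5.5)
The plan is to specialize Theorem~\ref{teo:Svarc_Genus_Homotopic_Distance} to the pair of projections and then invoke Proposition~\ref{pro:Complexity_Homotopic_Distance_weak} together with the strong result $\cTC(\CC)=\cD(p_1,p_2)$ of \cite{MAC-MOSQ-FUNCT}. Concretely, take the domain category to be $\CC\times\CC$, the target to be $\CC$, and the two functors to be $p_1,p_2\colon \CC\times\CC\to\CC$. Then $(p_1,p_2)\colon \CC\times\CC\to\CC\times\CC$ is the identity functor. The pullback of $\pi\colon \Pa\CC\to\CC\times\CC$ along the identity is therefore (isomorphic to) $\pi$ itself, with $q=\pi$ and $p=\id_{\Pa\CC}$.

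The key steps are as follows.

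\emph{Identifying the pullback.} I will check that the pullback of a functor along an identity is that functor. With the explicit description of the pullback used in Proposition~\ref{Pullback}, the objects are pairs $((c,d),I)$ with $\pi(I)=(c,d)$. These correspond bijectively to paths $I$, and the same holds for morphisms. So $q$ is identified with $\pi$ up to an isomorphism over $\CC\times\CC$. Since sectional category only depends on $q$ up to such an isomorphism, local sections transfer along it, and we get $\sC(q)=\sC(\pi)$.

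\emph{Applying Theorem~\ref{teo:Svarc_Genus_Homotopic_Distance}.} This gives $\cD_w(p_1,p_2)=\sC(\pi)$, and $\sC(\pi)\le\cD(p_1,p_2)$, with equality when $\CC\times\CC$ is finite.

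\emph{Translating distances into complexities.} Proposition~\ref{pro:Complexity_Homotopic_Distance_weak} gives $\cTC_w(\CC)=\cD_w(p_1,p_2)$, so $\cTC_w(\CC)=\sC(\pi)$. The cited strong statement $\cTC(\CC)=\cD(p_1,p_2)$ then yields $\sC(\pi)\le\cTC(\CC)$.

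\emph{Finiteness.} If $\CC$ is finite, then $\CC\times\CC$ has finitely many morphisms. The finiteness clause of the theorem, or equivalently Remark~\ref{rmk:weak_Strong_Homotopy_Invariants}, then gives equality.

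The only point needing any care is the first step: that the pullback along the identity really coincides with $\pi$, and that $\sC$ is invariant under isomorphism of functors over the base. I expect this to be routine rather than a real obstacle. The rest is a direct chaining of results already stated in the paper.
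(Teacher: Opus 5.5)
Your proposal is correct and follows the paper's proof. Like the paper, you specialize Theorem~\ref{teo:Svarc_Genus_Homotopic_Distance} to $F=p_1$, $G=p_2$, note that $(p_1,p_2)$ is the identity so the pullback is $\pi$ itself, and combine this with Proposition~\ref{pro:Complexity_Homotopic_Distance_weak}. The only cosmetic difference is that you get $\sC(\pi)\le\cTC(\CC)$ from the cited strong identity $\cTC(\CC)=\cD(p_1,p_2)$, whereas the paper reads it off Remark~\ref{rmk:weak_Strong_Homotopy_Invariants} as $\cTC_w(\CC)\le\cTC(\CC)$, with equality for finite $\CC$.
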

\begin{proof}
The morphism $(\pi_1,\pi_2) \colon \CC \times \CC \rightarrow \CC \times \CC$ is the identity of $\CC \times \CC$. So we have that
$$% https://tikzcd.yichuanshen.de/#N4Igdg9gJgpgziAXAbVABwnAlgFyxMJZABgBoBGAXVJADcBDAGwFcYkQAdDgYW67wC28AARdeIAL6l0mXPkIpyFanSat2Y7qI6CRmydJAZseAkSXEVDFm0ScOABXrbxUmSflEylmtfV2uJxduSRUYKABzeCJQADMAJwgBJDIQHAgkJVUbdgAKLjQsAH0lAuKAJgBKAzjE5MRymnTM3zVbe0KakASkpABmJozEYjduuv7BpHKJSgkgA
\begin{tikzcd}
\Pa \CC \arrow[d] \arrow[r]                & \Pa \CC \arrow[d, "\pi"] \\
\CC\times \CC \arrow[r, "{1_{\CC \times \CC}}"] & \CC \times \CC          
\end{tikzcd}
$$
and therefore applying Proposition \ref{teo:Svarc_Genus_Homotopic_Distance} and Proposition \ref{pro:Complexity_Homotopic_Distance_weak} we obtain $\mathrm{cTC}_w(\CC)=\cD_w(\pi_1,\pi_2)=\sC(\pi)$. The last part is also a consequence of Remark \ref{rmk:weak_Strong_Homotopy_Invariants}.
\end{proof}

\begin{corollary}
Let $\CC$ be a small connected category and let $c \colon \bullet \rightarrow \CC$ be a constant functor. Then 
    $\mathrm{ccat}_w(\CC) = \sG_w(c)$ 
    and $\mathrm{ccat}_w(\CC) \leq \mathrm{cTC}_w(\CC)$.    
\end{corollary}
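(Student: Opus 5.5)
The first equality should be a direct unwinding of definitions. A constant functor $c\colon \bullet\to\CC$ has $\sG_w(c)$ computed from geometric covers $\{\Uc_0,\dots,\Uc_n\}$ of $\CC$ together with functors $s_i\colon \Uc_i\to\bullet$ satisfying $c\circ s_i\simeq_w \iota_i$. There is exactly one functor $\Uc_i\to\bullet$, so $c\circ s_i$ is forced to be the constant functor on $\Uc_i$ with value the object $c$. The condition therefore says precisely that $\iota_i$ is weakly homotopic to the constant functor at $c$.

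This matches the definition of $\ccat_w(\CC)$, with one caveat: that definition asks for $\iota_i$ to be weakly homotopic to \emph{some} constant functor, not necessarily the one at $c$. Connectedness of $\CC$ lets us change the constant. Choose a path $J$ in $\CC$ from $c'$ to $c$. The assignment sending every object of $\Uc_i$ to $J$ and every morphism to the identity of $J$ is a functor $\Uc_i\to\Pa\CC$. It is a weak homotopy between the two constant functors. Concatenating it with the given weak homotopy, using transitivity of $\simeq_w$, shows that $\iota_i\simeq_w$ const$_{c'}$ implies $\iota_i\simeq_w$ const$_c$. Hence the admissible covers for the two invariants are the same, and $\ccat_w(\CC)=\sG_w(c)$. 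Alternatively, one can pass through Proposition~\ref{pro:LS-Category_Homotopic-Distance_weak}.

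For the inequality, I would follow the topological argument $\mathrm{cat}\le \mathrm{TC}$ via homotopic distance, using Proposition~\ref{pro:Complexity_Homotopic_Distance_weak}, which gives $\cTC_w(\CC)=\cD_w(p_1,p_2)$. Fix the object $c$ and consider the functor $j\colon \CC\to\CC\times\CC$, $j(x)=(c,x)$. Then $p_1\circ j$ is the constant functor at $c$ and $p_2\circ j=1_\CC$. Take a geometric cover $\{\Uc_0,\dots,\Uc_n\}$ of $\CC\times\CC$ with weak homotopies $H_i\colon \Uc_i\to\Pa\CC$ from $p_1|_{\Uc_i}$ to $p_2|_{\Uc_i}$. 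The preimages $j^{-1}(\Uc_i)$ form a geometric cover of $\CC$, by the same argument as in Proposition~\ref{prop:secat_decreases_along_pullback}. On each preimage, $H_i\circ j$ is a weak homotopy from the constant functor to the inclusion. So $\cD_w(c,1_\CC)\le \cD_w(p_1,p_2)$, and Proposition~\ref{pro:LS-Category_Homotopic-Distance_weak} converts this into $\ccat_w(\CC)\le\cTC_w(\CC)$.

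I expect the main obstacle to be only bookkeeping: checking that precomposition of a weak homotopy with a functor is again a weak homotopy (immediate, since $\pi_k\circ H\circ j$ is computed pointwise), and making sure the constant-changing step uses connectedness correctly. No fibration arguments are needed.
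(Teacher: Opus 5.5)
Your proof is correct, but it takes a more elementary route than the paper's. The paper obtains the equality through the fibration machinery. It writes $\ccat_w(\CC)=\cD_w(c,1_\CC)=\sC(q)$ via Theorem~\ref{teo:Svarc_Genus_Homotopic_Distance}, where $q$ is the pullback of the path fibration $\pi$ along $(c,1_\CC)$. It then identifies this pullback category with the category $\mathrm{E}_c$ of the fibrant replacement of $c$, so that $q$ is its fibration part. Finally it concludes with homotopy invariance of the weak genus (Lemma~\ref{Homotopy_Invariance_Svarc}) and $\sG_w=\sC$ for weak fibrations (Proposition~\ref{prop:homotopic-sections_fibration_are_sections}).

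You instead observe that, since $\bullet$ is terminal, a weak homotopic local section of $c$ is just a weak homotopy between $\iota_i$ and the constant functor at $c$. Connectedness, via a constant path-valued functor and transitivity of $\simeq_w$, then lets you move the constant. This makes the equality essentially definitional. It avoids the identification with $\mathrm{E}_c$ and the fibrant replacement theorem, both of which are only sketched in the paper. What the paper's route buys is the interpretation of $\ccat_w(\CC)$ as the sectional category of the based path fibration $\mathrm{E}_c\to\CC$, the analogue of the classical $\mathrm{cat}(X)=\mathrm{secat}(PX\to X)$.

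For the inequality, the two arguments are the same idea in different form. Your $j=(c,1_\CC)$ is exactly the functor along which the paper pulls back $\pi$, and pulling the cover back along $j$ is what Proposition~\ref{prop:secat_decreases_along_pullback} does. The paper phrases this as $\sC(q)\le\sC(\pi)$ and then uses $\sC(\pi)=\cTC_w(\CC)$ (Corollary~\ref{cor:cTC-Sectional-Category}). You stay at the level of homotopic distance, proving $\cD_w(c,1_\CC)\le\cD_w(p_1,p_2)$. You therefore need neither that $\pi$ is a fibration nor the sectional-category identifications, only one direction each of Propositions~\ref{pro:LS-Category_Homotopic-Distance_weak} and~\ref{pro:Complexity_Homotopic_Distance_weak}. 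The direction $\ccat_w(\CC)\le\cD_w(c,1_\CC)$ is immediate from the definitions and does not even use connectedness.
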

\begin{proof}
     Applying Theorem \ref{teo:Svarc_Genus_Homotopic_Distance} and Proposition \ref{pro:LS-Category_Homotopic-Distance_weak} we obtain that $\mathrm{ccat}_w(\CC)=\cD_w(c,1_\CC)=\sC(q)$ where $q$ is the morphism that appears in the pullback
     $$
     % https://tikzcd.yichuanshen.de/#N4Igdg9gJgpgziAXAbVABwnAlgFyxMJZABgBoBGAXVJADcBDAGwFcYkQAdDgYW5AF9S6TLnyEU5CtTpNW7LrwAEXPAFt4ynn0HDseAkUnFpDFm0ScOABXqbeAoSAx6xRMsZqm5FrjbvbpGCgAc3giUAAzACcIVSQyEBwIJEkZM3YACnIAfQVuUgBjAEoHSJi4xAAmGiSUz1lzSzQsUpBo2KQAZhrkxGIdNvKunqRK-kp+IA
\begin{tikzcd}
P \arrow[d,"q"] \arrow[r]  & \Pa \CC \arrow[d, "\pi"] \\
\CC \arrow[r, "{(c,1_\CC)}"] & \CC \times \CC          
\end{tikzcd}$$ 
We claim that $P$ is precisely (with the notation of Theorem \ref{functor_factorisation_through_fibration}) $\mathrm{E}_c$ where $c$, by an abuse of notation, denotes the constant functor associated to the object $c$. Indeed, the objects of $P$ are pairs $(x,I)$ where $x$ is an object in $\CC$ and $I$ is a path between $c$ and $x$. Using the notation of Theorem \ref{functor_factorisation_through_fibration} of $c_1$ as the homotopy equivalence and $c_2$ as the weak fibration of the fibration replacement of $c$ we can check that $c_2=q$. Hence, $\sG_w(c)=\sG_w(c_1 \circ c_2)=\sG_w(c_2)=\sG_w(q)=\sC(q)=\cD_w(c,1_\CC)$.
     %This follows from Propositions \ref{prop:homotopic-sections_fibration_are_sections}, \ref{LS-CAT} and \ref{Composition}.
     The inequality $\mathrm{ccat}_w(\CC) \leq \mathrm{cTC}_w(\CC)$ follows from Proposition \ref{prop:secat_decreases_along_pullback}.
\end{proof}

\begin{prop}\label{prop:secat_less_cat}
    Let $P \colon \E \rightarrow \Ba$ be a surjective weak fibration. Then $\sC(P) \leq \ccat_w(\Ba) $
\end{prop}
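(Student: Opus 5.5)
Take a geometric cover $\{\Uc_0,\dots,\Uc_n\}$ of $\Ba$ with $n=\ccat_w(\Ba)$. By definition each inclusion $\iota_i\colon \Uc_i\to\Ba$ is weakly homotopic to a constant functor, say with value $b_i$. We will build on each $\Uc_i$ a weak homotopic section of $P$ (a functor $s_i\colon \Uc_i\to\E$ with $P\circ s_i\simeq_w \iota_i$). This gives $\sG_w(P)\le n$, and then Proposition~\ref{prop:homotopic-sections_fibration_are_sections}(2), which applies because $P$ is a weak fibration, yields $\sC(P)=\sG_w(P)\le \ccat_w(\Ba)$.

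The construction of $s_i$ uses surjectivity. Since $P$ is surjective, choose an object $e_i\in\E$ with $P(e_i)=b_i$. Let $s_i\colon\Uc_i\to\E$ be the constant functor with value $e_i$, sending every morphism to $1_{e_i}$. Then $P\circ s_i$ is the constant functor $\Uc_i\to\Ba$ with value $b_i$. By hypothesis this functor is weakly homotopic to $\iota_i$, so $s_i$ is a local weak homotopic section. Since weak homotopy is an equivalence relation (observed after the definition), the direction of the homotopy does not matter.

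If one prefers to avoid Proposition~\ref{prop:homotopic-sections_fibration_are_sections}, the section can be produced directly. Take the weak homotopy $H\colon\Uc_i\to\Pa\Ba$ from the constant functor $b_i$ to $\iota_i$. Then $\pi_0\circ H=P\circ s_i$, and the weak fibration property gives a lift $\wH\colon\Uc_i\to\Pa\E$ with $\Pa P\circ\wH=H$. By Lemma~\ref{Conmutativo_caminos}, $P\circ\pi_1\circ\wH=\pi_1\circ H=\iota_i$, so $\pi_1\circ\wH$ is a genuine local section.

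The main obstacle is mild: we must be clear about which notion of surjectivity is needed. Only surjectivity on objects is used, and only onto the finitely many objects $b_i$. We also have to check that the constant functor $s_i$ is a functor on $\Uc_i$, which holds trivially. No finiteness hypothesis enters, since the weak fibration lifting is used rather than the strong one.
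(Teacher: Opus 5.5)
Your proposal is correct and is essentially the paper's argument. You take constant functors $s_i$ at a preimage $e_i$ of the constant value $b_i$, so that $P\circ s_i$ is exactly the constant functor weakly homotopic to $\iota_i$; the paper calls these $s_i$ local sections outright, whereas you correctly note they are only weak homotopic sections and invoke Proposition~\ref{prop:homotopic-sections_fibration_are_sections}(2), or the explicit lift $\pi_1\circ\wH$, to upgrade them to genuine local sections.
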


\begin{proof}
    Suppose that you have a geometric cover $\{U_0,\dots,U_n\}$ of $\Ba$ such that $\iota_i\simeq_w c $ where $c$ is a constant functor. We can define $s_i \colon \Uc_i \rightarrow \E$ as a constant functor $e$ with $e$ such that $P(e)=b$. Then we have that $s_i$ is a local section since $P \circ s_i=c \simeq_w \iota_i$.
    %\colon \Uc_i \rightarrow \Ba$
\end{proof}

\begin{corollary}
     Let $F,G \colon \CC \rightarrow \Dc$ be a functor between small categories. Then:
     \begin{enumerate}
         \item $\cTC(\CC)_w \leq \ccat_w(\CC \times \CC)$
         \item $\cD_w(F,G) \leq \ccat_w(\CC)$
     \end{enumerate}
\end{corollary}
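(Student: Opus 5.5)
Both items come from combining Proposition~\ref{prop:secat_less_cat} with the identifications of $\cTC_w$ and $\cD_w$ as sectional categories of pullbacks of the path fibration (Corollary~\ref{cor:cTC-Sectional-Category} and Theorem~\ref{teo:Svarc_Genus_Homotopic_Distance}). For each item I will show that the relevant functor is a surjective weak fibration and then apply Proposition~\ref{prop:secat_less_cat}.

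For (1), Corollary~\ref{cor:cTC-Sectional-Category} gives $\cTC_w(\CC)=\sC(\pi)$ with $\pi\colon \Pa\CC\to\CC\times\CC$. By Theorem~\ref{Initial_Final_Path_fibration}, $\pi$ is a (length-preserving) weak fibration. Proposition~\ref{prop:secat_less_cat} then yields $\sC(\pi)\le\ccat_w(\CC\times\CC)$ once $\pi$ is surjective on objects. Surjectivity onto all pairs $(c,d)$ requires a path from $c$ to $d$, so here I must assume $\CC$ connected. This is the implicit hypothesis already used for LS-category (Proposition~\ref{pro:LS-Category_Homotopic-Distance_weak}), and I will state it explicitly. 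Alternatively I can avoid surjectivity: in the proof of Proposition~\ref{prop:secat_less_cat}, one only needs some object $e$ with $P(e)=b$ for the basepoint $b$ of the constant functor. For the path fibration, a constant functor to $(b,b')$ with $\iota_i\simeq_w(b,b')$ on a nonempty $\Uc_i$ already forces paths from any object of $\Uc_i$ to both $b$ and $b'$, hence a path from $b$ to $b'$ exists. I would present this refinement so that no extra hypothesis is needed.

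For (2), I apply Theorem~\ref{teo:Svarc_Genus_Homotopic_Distance}. Its pullback gives $\cD_w(F,G)=\sC(q)$, where $q\colon P\to\CC$ is the pullback of $\pi$ along $(F,G)$. By Proposition~\ref{Pullback} and Theorem~\ref{Initial_Final_Path_fibration}, $q$ is a weak fibration. Proposition~\ref{prop:secat_less_cat} then gives $\sC(q)\le\ccat_w(\CC)$, provided $q$ hits the basepoint $b$ of each constant functor. That requires a path from $F(b)$ to $G(b)$ in $\Dc$. I obtain it as follows: if $\iota_i\simeq_w b$ on $\Uc_i$, then for each $x\in\Uc_i$ there is a path from $x$ to $b$ in $\CC$. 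Applying $F$ and $G$ gives paths $F(x)\rightsquigarrow F(b)$ and $G(x)\rightsquigarrow G(b)$, but that alone does not link $F(b)$ to $G(b)$.

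This basepoint issue is the main obstacle. The fix I would try first is a direct argument that bypasses surjectivity of $q$. Composing the weak homotopy $H\colon\Uc_i\to\Pa\CC$ (from $\iota_i$ to the constant functor $b$) with $\Pa F$ and $\Pa G$ gives $F|_{\Uc_i}\simeq_w F(b)$ and $G|_{\Uc_i}\simeq_w G(b)$ as constant functors. It then remains to show that the constants $F(b)$ and $G(b)$ are weakly homotopic, which holds when $\Dc$ is connected (choose a fixed path, as a constant functor into $\Pa\Dc$). Concatenating weak homotopies by transitivity gives $F|_{\Uc_i}\simeq_w G|_{\Uc_i}$. So the cover realising $\ccat_w(\CC)$ witnesses $\cD_w(F,G)\le\ccat_w(\CC)$, under the connectedness hypothesis on $\Dc$ that the statement tacitly assumes (as in the topological analogue). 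The same direct argument, applied to $p_1,p_2$ together with Proposition~\ref{pro:Complexity_Homotopic_Distance_weak}, also reproves (1) for connected $\CC$.
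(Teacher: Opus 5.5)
Under the assumption that $\CC$ is connected, your argument for (1) is the paper's. It uses $\cTC_w(\CC)=\sC(\pi)$ by Corollary~\ref{cor:cTC-Sectional-Category}, that $\pi$ is a weak fibration, and Proposition~\ref{prop:secat_less_cat}. The last step needs $\pi$ surjective on objects, which the paper uses without comment.

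However, the refinement you say you would present ``so that no extra hypothesis is needed'' is false. An object of $\Uc_i\subseteq\CC\times\CC$ is a pair $(x,y)$. A path in $\CC\times\CC$ is a pair of paths in $\CC$, so a weak homotopy from $\iota_i$ to the constant functor at $(b,b')$ only gives a path from $x$ to $b$ and a separate path from $y$ to $b'$. Nothing joins $b$ to $b'$.

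The hypothesis genuinely cannot be dropped. Let $\CC$ be discrete with two objects $a,b$. Then $\CC\times\CC$ is discrete with four objects, so $\ccat_w(\CC\times\CC)=3$. But no weak Farber subcategory can contain $(a,b)$: the functor to $\CC$ in the definition would have to send $(a,b)$ to an object joined by paths to both $a$ and $b$. Hence $\cTC_w(\CC)$ is not finite. Keep connectedness of $\CC$ as an explicit assumption and discard the refinement.

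For (2), you correctly spot the analogous hidden requirement in the paper's route. The paper writes $\cD_w(F,G)=\sC(q)$ and asserts $\sC(q)\le\ccat_w(\CC)$. That is an instance of Proposition~\ref{prop:secat_less_cat}, which needs $q$ surjective on objects, i.e.\ a path from $F(c)$ to $G(c)$ for every $c$. This fails for $\CC=\bullet$, $\Dc$ discrete on two objects and $F,G$ the two distinct constants: there $\ccat_w(\bullet)=0$ but $\cD_w(F,G)$ is not finite.

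Your direct argument is correct and takes a genuinely different route. Instead of passing through the pullback $q$ and the bound for surjective weak fibrations, you:
\begin{enumerate}
\item transport the null-homotopy $H\colon\Uc_i\to\Pa\CC$ along $\Pa F$ and $\Pa G$ (Lemma~\ref{Conmutativo_caminos});
\item join the constants $F(b)$ and $G(b)$ by a fixed path, viewed as a constant functor into $\Pa\Dc$;
\item conclude by transitivity of $\simeq_w$.
\end{enumerate}
This avoids all the fibration results: the path-fibration theorem, stability under pullback, and lifting weak homotopic sections to sections. It also makes the needed hypothesis visible: $\Dc$ connected, or just $F(b_i)$ and $G(b_i)$ joined by a path for each base object $b_i$. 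As you note, it recovers (1) for connected $\CC$ as the case $F=p_1$, $G=p_2$ via Proposition~\ref{pro:Complexity_Homotopic_Distance_weak}. The paper's route instead keeps the inequality inside the sectional-category framework, parallel to the topological proof via the path fibration.
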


\begin{proof}
 We prove (1). We know that $\cTC(\CC)_w=\sC(\pi)$ by Proposition \ref{cor:cTC-Sectional-Category}. Applying Proposition \ref{prop:secat_less_cat} we obtain $\sC(\pi) \leq \ccat_w(\CC \times \CC)$.
 We prove (2). $\cD_w(F,G)=\sC(q)$ for $q$ the morphism defined by the pullback of Theorem \ref{teo:Svarc_Genus_Homotopic_Distance}. Applying Proposition \ref{cor:cTC-Sectional-Category} we obtain the inequality since $\sC(q) \leq \ccat_w(\CC)$.
\end{proof}

\begin{comment}
 
Now we will get some lower bounds for the LS-Category and the categorical complexety using a variation of following theorem that we proved in \cite{Cohomologysvarcgenus}.

\begin{teo}\cite[Theorem 2.18]{Cohomologysvarcgenus}
    Let $P \colon \E \rightarrow \Ba$ be a weak fibration and let $D$ be a natural system in $\Ba$ with an endopairing. The length of the cup product in $\ker{P^*}$, denoted by $\cpl(\ker{P^*})$, is a lower bound for the weak Svarc genus of $P$:  $\cpl (\ker{P^*}) \leq \sG_w(P)$.
\end{teo}
\begin{proof}
    The proof in \cite{Cohomologysvarcgenus} show that the number of local section is bounded below by the length of the cup product. Since i a weak fibration the number of local sections is the same as the number of weak-local section we have the wanted inequality.
\end{proof}

\begin{corollary}
    Let $\CC$ be a small category and $D\colon \CC \rightarrow \Ab$ a natural system. We have the following inequalities:
    $$\cpl(H^*(\CC,D) \leq \mathrm{ccat}_w(\CC) \leq \mathrm{ccat}(\CC)$$
\end{corollary}

\begin{proof}
    We know that $\ccat_w(\CC)=$
\end{proof}

%The previous example not only as a way of sho

\end{comment}

%\bibstyle{sn-mathphys}
\bibliographystyle{plain}
\bibliography{biblio}% common bib file HACER ESTE FICHERO

%% if required, the content of .bbl file can be included here once bbl is generated
%\input sn-article.bbl

%% Default %%
%%\input sn-sample-bib.tex%

\end{document}